\DeclareMathOperator{\arccosh}{arccosh}
\DeclareMathOperator{\arccot}{arccot}
\definecolor{color1}{RGB}{27,158,119}
\definecolor{color2}{RGB}{217,95,2}
\definecolor{color3}{RGB}{117,112,179}
\definecolor{color4}{RGB}{231,41,138}
\newcommand{\ve}{\varepsilon}
\numberwithin{equation}{section}
\newtheorem{theorem}{Theorem}
\newtheorem{lemma}{Lemma}
\newtheorem{proposition}{Proposition}
\newtheorem{corollary}{Corollary}
\newtheorem{remark}{Remark}
\title{\bf Selection of the ground state \\on a compact metric graph}
\author{Robert Marangell$^{1}$ and Dmitry E. Pelinovsky$^{2}$ \\
{\small $^{1}$  School of Mathematics and Statistics F07, University of Sydney, NSW 2006, Australia} \\
{\small $^{2}$ Department of Mathematics, McMaster University, Hamilton, Ontario, L8S 4K1, Canada} }
\begin{document}

\maketitle

\begin{abstract} 
We show that the ground state in the Fisher--KPP model on a compact metric graph with Dirichlet conditions on boundary vertices is either trivial (zero) or 
nontrivial and strictly positive. For positive initial data, we prove that the trivial ground state is globally asymptotically stable if the edges of the metric graph are uniformly small and the nontrivial ground state is globally asymptotically stable if the edges are uniformly large. 
For the intermediate case, we find a sharp criterion for the existence, uniqueness and global asymptotic stability of the trivial versus nontrivial ground state. Besides standard methods based on the comparison theory, energy minimizers, and the lowest eigenvalue of the graph Laplacian, we develop a novel method based on the period function for differential equations to characterize the nontrivial ground state in the particular case of flower graphs.
\end{abstract}

%\tableofcontents

\section{Introduction}
\label{sec-1}

Let $\Gamma$ be a compact metric graph with finitely many edges of finite lengths. We assume that 
$\Gamma$ is connected and there is at least one pendant (an edge with a boundary vertex not connected to any other edge). 
Let $u(t,x) : [0,\infty) \times \Gamma \to \mathbb{R}$ be the state variable satisfying the initial-value problem for the normalized Fisher--KPP equation 
\begin{equation}
\label{Fisher}
\left\{ \begin{array}{l} u_t = \Delta_{\Gamma} u + u(1-u), \quad t > 0, \\
u|_{t = 0} = u_0,\end{array} \right.
\end{equation}
where $\Delta_{\Gamma}$ is the Laplacian operator defined pointwisely on each edge 
of the graph $\Gamma$ subject to the suitable conditions at vertices 
and the quadratic nonlinearity $u(1-u)$ is also defined poinwisely on each edge of the graph. 

We use Dirichlet conditions (i.e. $u=0$) on the boundary vertices of the pendant(s). For all other (interior) vertices of the graph $\Gamma$, 
we use the Neumann--Kirchhoff (NK) conditions (continuity of $u$ and the zero sum of outward normal derivatives of $u$ 
to each edge connecting the vertex). The graph Laplacian $\Delta_{\Gamma}$ is a self-adjoint operator in $L^2(\Gamma)$ with the domain $D(\Delta_{\Gamma})$ defined in $H^2(\Gamma)$ pointwisely subject to the boundary conditions above. 

\subsection{Motivations}

Parabolic models on the metric graphs such as the Fisher--KPP, Keller--Segel, and chemotaxis models 
have been considered in \cite{Borsche2014,Bretti2014,Natalini2015,Camilli2017,JPS2019,DLP20} due to their 
applications to river networks, optical fibers, and other bio-engineering systems. Analysis 
of well-posedness of the Keller--Segel model on a compact metric graph was developed 
in \cite{Shemtaga2025} based on the previous study of the heat kernel estimates in \cite{Becker2021,Harrell2023}. 

Existence and stability of nonconstant states on compact metric graphs have been studied 
in \cite{Y2001,IK2021,I2022,MM25} for various models of mathematical biology. 
Bifurcations and asymptotic stability of the constant states 
in the Keller--Segel model, which includes the Fisher--KPP model, were recently studied in \cite{Shemtaga2024} 
subject to the NK conditions. Due to the Dirichlet conditions at the open vertices, 
the set of steady states on the metric graph $\Gamma$ considered here is more complicated than 
in \cite{Shemtaga2024} and includes both the trivial (zero) states and the nontrivial (positive and nonconstant) states.

Spreading speeds of a propagation front were also studied on unbounded metric graphs. 
It was shown in \cite{FHT2021} that the spreading speed may be slower 
than the limiting speed of the homogeneous Fisher-KPP model if the metric graph is an infinite random tree. Furthermore, front propagation may be blocked 
by the steady states pinned to the vertices of the graphs. This phenomenon was shown 
in a bistable reaction--difussion system for the star graphs in \cite{JM2019,JM2021} and 
for the tree graphs in \cite{JM2024,LM24}. Generalized traveling waves of the Fisher-KPP model on infinite metric graphs 
were recently considered in \cite{Shemtaga2026}. Our work on the compact metric graphs 
does not cover the propagation fronts, but we point out that the concept of traveling waves can be 
introduced for the Fisher--KPP, Keller--Segel and other reaction--diffusion models on the periodic (unbounded) metric graphs as 
is done for the nonlinear Schr\"{o}dinger (NLS) model in \cite{LeCoz2025}.

The purpose of this work is to find the precise conditions on the existence and asymptotic 
stability of the nontrivial (strictly positive) state of the Fisher--KPP model 
on the compact metric graph $\Gamma$. Besides the standard tools based 
on the comparison principle, energy minimizers, and the lowest eigenvalue
of the graph Laplacian, we also use the period function for periodic orbits to characterize 
all nontrivial states in the class of flower graphs with multiple loops. 

The period function was pioneered in \cite{KMPX21,NP2020} (see review in \cite{KNP2022}) 
in the context of the NLS model with the cubic nonlinearity. It has been used to study multiple 
(positive and nonpositive) steady states of the NLS model in the tadpole and other looping edge graphs 
in \cite{Pava2025,Pava2024,ACT24,ACT25}. However, the Fisher--KPP model has a quadradic nonlinearity, for which new estimates 
on the period function are needed. 

One of the technical novelties of our paper is the estimates on the period function to control periodic orbits near the homoclinic 
orbits. The method of \cite{Berkolaiko} to study the Dirichlet--to--Neumann map 
for the periodic orbits near the homoclinic orbits relies on the asymptotic expansion 
of the elliptic functions near the hyperbolic functions, and we show that a much simpler 
study can be developed by analyzing weakly singular integrals arising in the definition of the period function. We have also checked 
that the leading-order corrections in the asymptotic expansions of the elliptic functions 
cancel out for the quadratic nonlinearity, so that the method based on analysis of the weakly singular integrals 
is the only realistic tool to estimate the Dirichlet--to--Neumann map in the Fisher--KPP model. 

The new method based on the period function recovers the conclusions based on the classical methods. 
Moreover, it is applicable to a more general class of the reaction-diffusion models for which the 
classical methods might not be applicable. As a drawback, the period function can only be introduced 
for the second-order differential equations and the method is dependent on the structure of the graph $\Gamma$.

\subsection{Main results}

Since the Fisher--KPP equation is a gradient system, we have the free energy 
\begin{equation}
\label{energy}
H(u) = \frac{1}{2} \int_{\Gamma} [(\nabla_{\Gamma} u)^2 - u^2] dx + \frac{1}{3} \int_{\Gamma} u^3 dx, 
\end{equation}
such that $u_t = -\frac{\delta H}{\delta u}$. The free energy $H(u)$ is well defined for $u \in H^1_0(\Gamma)$, 
where $H^1_0(\Gamma)$ is defined pointwisely on edges of $\Gamma$ with Dirichlet conditions at the boundary vertices 
and the continuity conditions at the interior vertices. 
The mapping $(0,\infty) \ni t \mapsto H(u(t,\cdot)) \in \mathbb{R}$ is decreasing along every solution 
$u(t,\cdot) \in H^1_0(\Gamma)$, $t \in (0,\infty)$ which exists if $u_0 := u(0,\cdot) \geq 0$, see 
Theorem \ref{th-well}. Hence, we define the function space 
\begin{equation}
\label{function-space}
\mathcal{H}_0 = \left\{ u \in H^1_0(\Gamma) : \quad u \geq 0 \right\}
\end{equation}
and consider the minimization of the free energy $H(u)$ given by (\ref{energy}) on $\mathcal{H}_0$. 
The Euler--Lagrange equation for the critical points of $H(u)$ is given by the elliptic equation
\begin{equation}
\label{EL}
-\Delta_{\Gamma} u = u(1-u), \quad u \in D(\Delta_{\Gamma}),
\end{equation}
where $D(\Gamma) \subset H^1_0(\Gamma)$ is defined in $H^2(\Gamma)$ pointwisely on edges of $\Gamma$ subject to 
the boundary conditions of $H^1_0(\Gamma)$ and the additional conditions on the sum of outward normal derivatives being zero  
at the interior vertices of the graph $\Gamma$. Since $\Gamma$ consists of line segments in one spatial dimension 
and the NK conditions are natural boundary conditions for minimization of $H(u)$, 
every weak solution of $-\Delta_{\Gamma} u = u(1-u)$ in $H^1_0(\Gamma)$ is a strong solution of the elliptic 
equation (\ref{EL}) and vice versa. We define {\em steady states} as solutions of the Euler--Lagrange equation (\ref{EL}) 
and {\em ground states} as minimizers of the energy $H(u)$.

Let $\{ L_j \}$ be the set of lengths of edges of the graph $\Gamma$. The main results of this work are summarized as follows.

\begin{enumerate}
\item There is a unique global attractor of the initial-value problem \eqref{Fisher} for every $u_0 \in \mathcal{H}_0$. 
See Theorems \ref{th-well} and \ref{th-unique}. The attractor is either trivial or nontrivial depending whether 
the principal eigenvalue $\lambda_0(\Gamma)$ of $-\Delta_{\Gamma}$ in $L^2(\Gamma)$ is greater or smaller than one. 

	\item There exists $L_0 > 0$ such that if $\max_{j} L_j < L_0$, then the trivial (zero) ground state is a global attractor of the dynamics for initial data $u_0 \in \mathcal{H}_0$. The zero ground state is a minimizer of the energy $H(u)$ in $\mathcal{H}_0$ at the zero level. See Corallary \ref{cor-var-1}.
	
	\item For every $j$-th edge of length $L_j$, there is $L_j^{(0)} \in [0,\infty)$ such that for every $L_j > L_j^{(0)}$,  a strictly positive ground state (vanishing only at the boundary vertices due to Dirichlet conditions) is a global attractor of the dynamics for initial data  $u_0 \in \mathcal{H}_0 \backslash \{0\}$. The strictly positive ground state is a minimizer of the energy $H(u)$ in $\mathcal{H}_0$ at a negative level. See Corollary \ref{cor-var-2}. 
	
	\item There exist $L_* > L_0$ such that if $L_{\rm min} := \min_j L_j > L_*$, then the unique, strictly positive gound state $u_*$ satisfies
\begin{equation}
    \label{proximity}
	\| u_* - 1\|_{L^{\infty}(\Gamma_0)} \leq C e^{-L_{\rm min}},
\end{equation}
	where $C$ is a positive constant and $\Gamma_0$ is the part of $\Gamma$ without the pendants. See Theorem \ref{theorem-localized-state}.

    \item Consider a particular flower graph shown in Figure \ref{fig:flowers}, which consists of a line segment $[0,L]$ connected to $N$ loops 
    of different lengths $[0,L_1]$, $[0,L_2]$, $\dots$, $[0,L_N]$ with a Dirichlet condition at the boundary vertex and the NK condition 
    at the interior vertex. For every $L > L_* = L_*(L_1,L_2,\dots,L_N)$ given by 
\begin{equation}
    \label{threshold}
L_*(L_1,L_2,\dots,L_N) = \arccot \left( 2 \sum_{j=1}^N \tan(L_j) \right).
\end{equation}
there exists a unique, strictly positive ground state. See Theorems \ref{theorem-example-1}, \ref{theorem-example-2}, and \ref{theorem-example-3}
for the interval $[0,L]$ (with $L_1 = L_2 = \dots = L_N = 0$), the symmetric flower graph (with $L_1 = L_2 = \dots = L_N = L_0 > 0$), 
and the general flower graph, respectively. 
\end{enumerate}

\begin{figure}[htb!]
    \centering
    \includegraphics[width=0.3\linewidth]{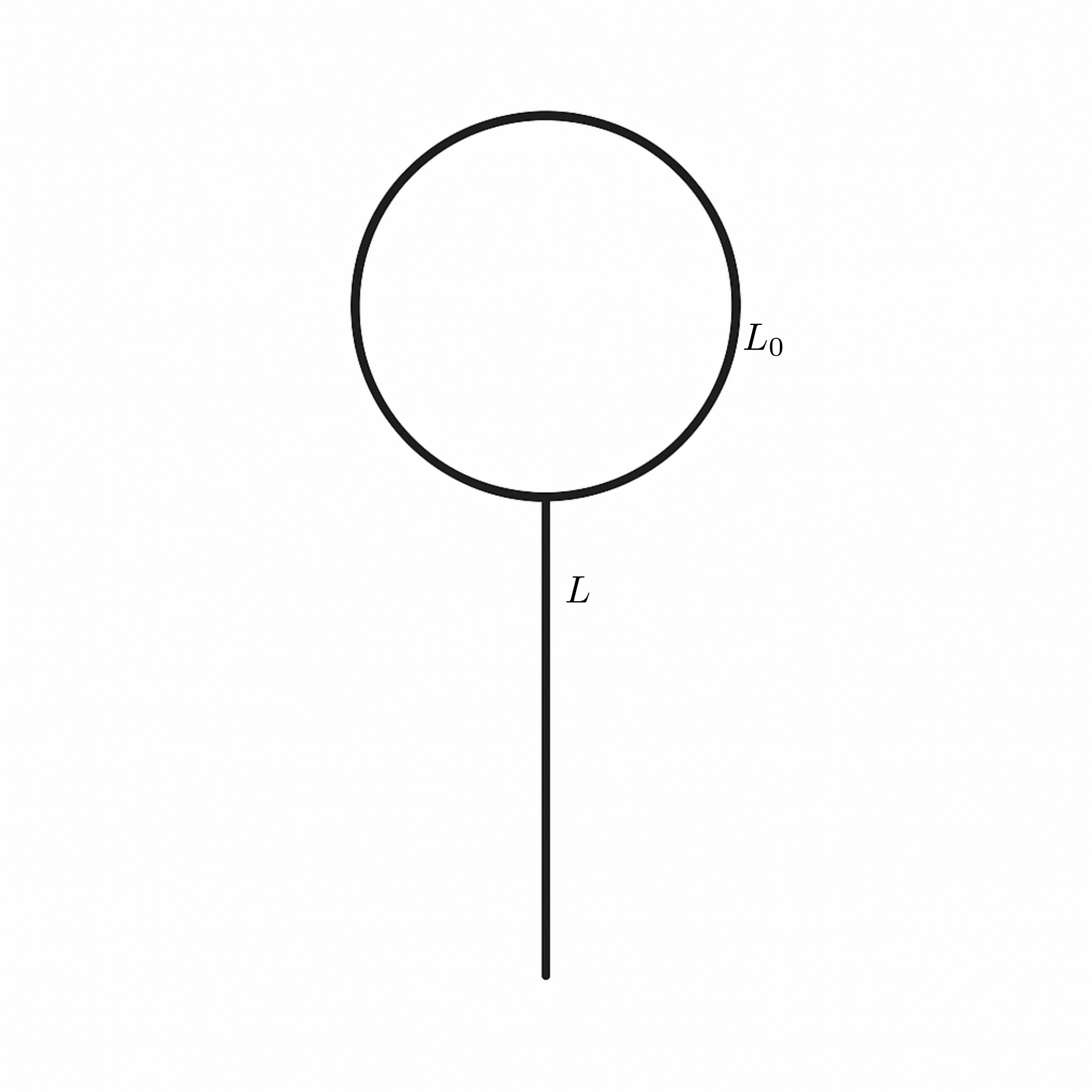}
    \includegraphics[width=0.3\linewidth]{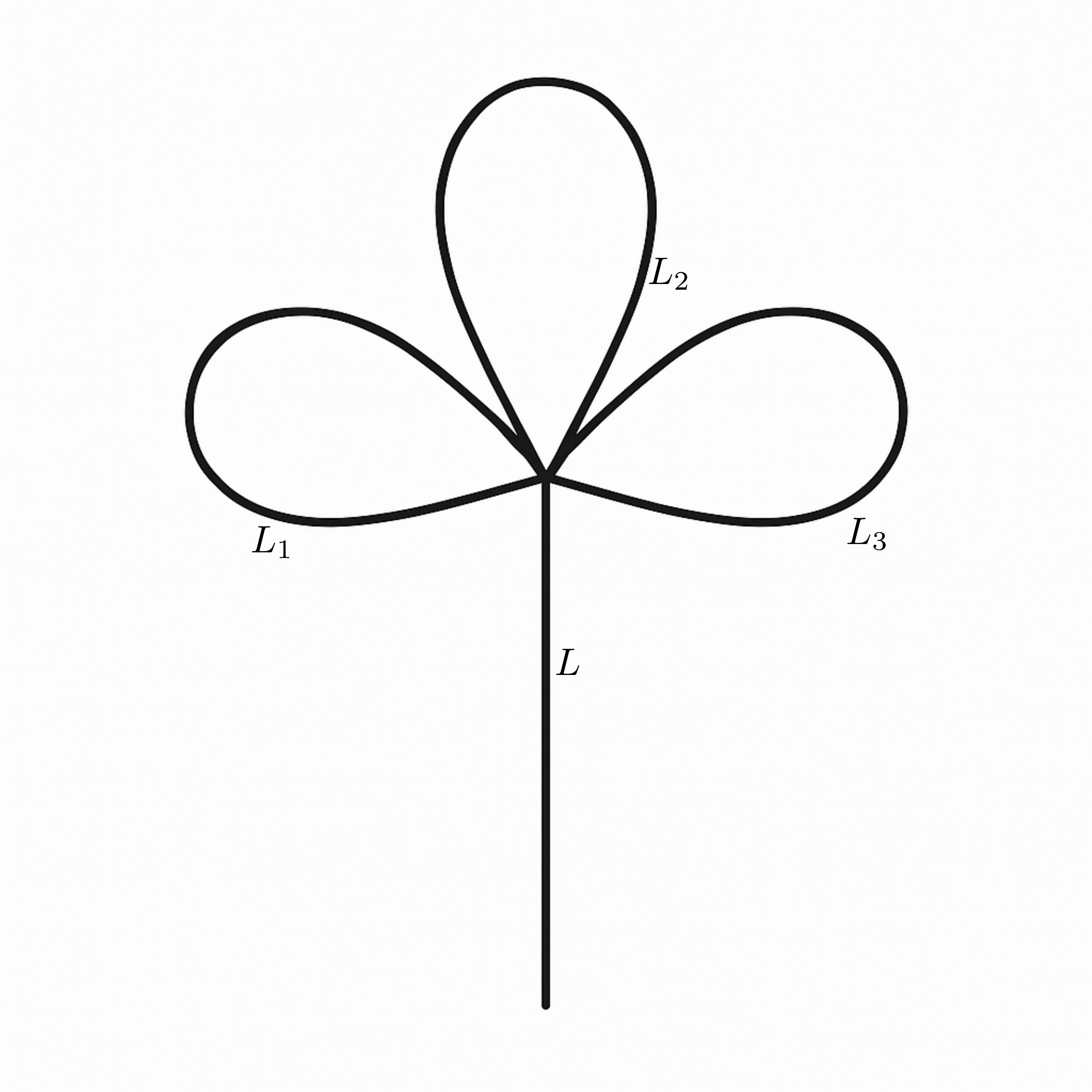}
    \includegraphics[width=0.3\linewidth]{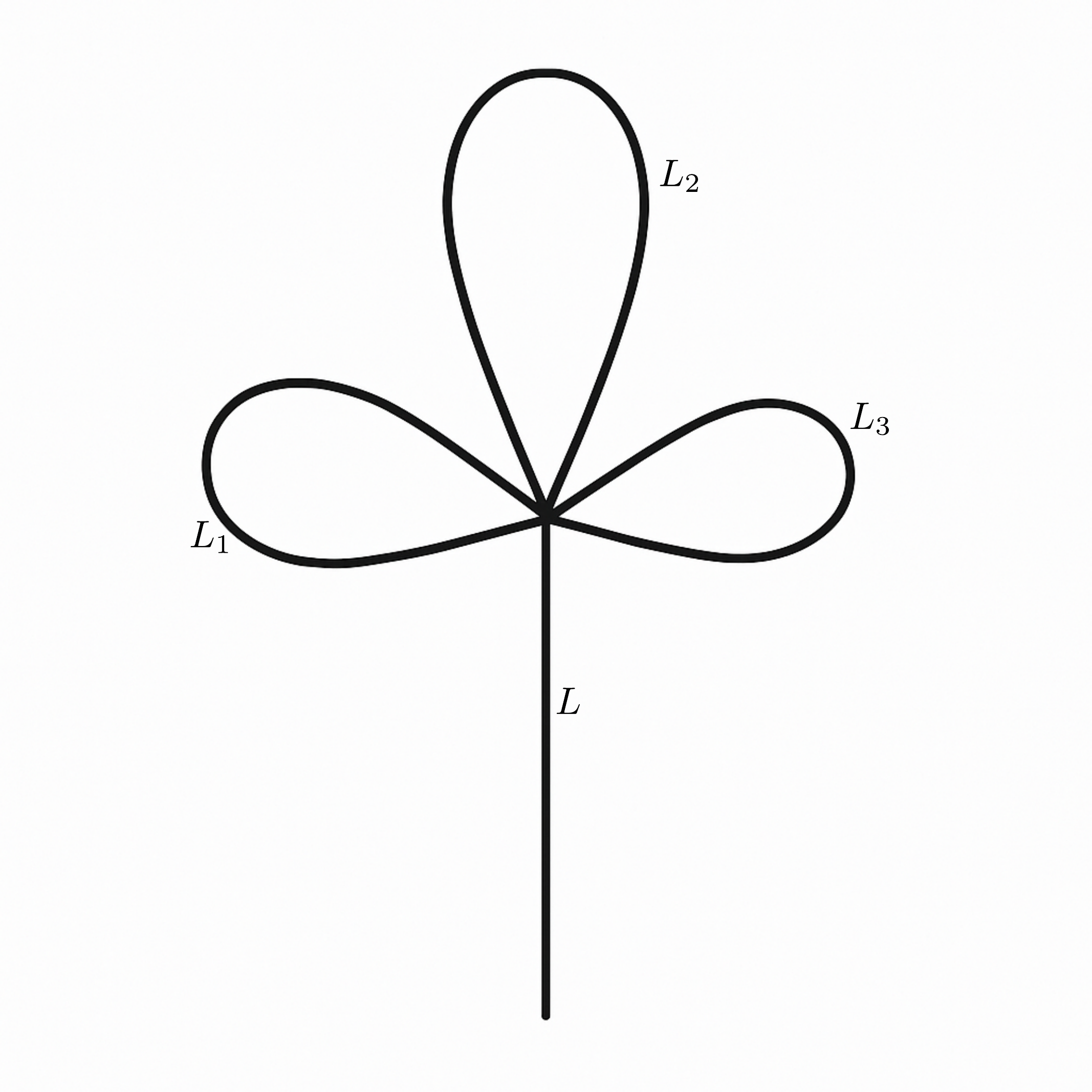}
    \caption{Left: A tadpole graph ($N=1$). Center: A flower graph with three loops of equal length ($N=3$). Right: A flower graph with three loops of unequal length ($N=3$).}
    \label{fig:flowers}
\end{figure}

\subsection{Organization of the paper}

We start in Section \ref{sec-eig} by studying how eigenvalues of $-\Delta_{\Gamma}$ in $L^2(\Gamma)$ depend on the edge lengths of the graph $\Gamma$. 
We show that every eigenvalue is monotonically decreasing of the length parameter for each edge of the graph $\Gamma$, see Lemma \ref{lem-linear-1}. 
Furthermore, eigenvalues of $-\Delta_{\Gamma}$ diverge to $\infty$ (converge to $0$) if the edge lengths shrink to $0$ (expand to $\infty$) uniformly, 
see Lemma \ref{lem-linear-2}. These results are relevant because the sharp criterion separating the existence of the trivial and nontrivial ground 
states is $\lambda_0(\Gamma) = 1$, where $\lambda_0(\Gamma)$ is the lowest eigenvalue of $-\Delta_{\Gamma}$ in $L^2(\Gamma)$.

In Section \ref{sec-min}, we use the comparison principle for elliptic equations to show that the range of the steady states is in $[0,1]$, 
see Lemma \ref{lem-comparison}. The sharp criterion of $\lambda_0(\Gamma) = 1$ separating the trivial minimizers of energy 
for $\lambda_0(\Gamma) \geq 1$ and the nontrivial minimizers of energy for $\lambda_0(\Gamma) \in (0,1)$ is shown in Lemma \ref{lem-var-1}. 
Furthermore, we show that the trivial ground state is the only steady state for $\lambda_0(\Gamma) \geq 1$ in Lemma \ref{lem-var-2} 
and that the nontrivial ground state is strictly positive on all points of $\Gamma$ except for the boundary vertices in Lemma \ref{lem-var-3}. By using the comparison principle for parabolic equations in Proposition \ref{prop-comparison} and the Sturm's theory for eigenvalues 
of the stationary Schr\"{o}dinger equation in Proposition \ref{lem:spectralcomp}, we give the proof of 
the global well-posedness in Theorem \ref{th-well} and the global asymptotic stability of the trivial and nontrivial 
ground states in Theorem \ref{th-unique}, respectively. Corollaries \ref{cor-var-1} and \ref{cor-var-2} show that the trivial ground state 
arises if the lengths of the graph $\Gamma$ are uniformly small and the nontrivial ground state arises if at least 
one edge of the graph $\Gamma$ is long.

In Section \ref{sec-4}, we introduce two period functions for the periodic orbits and give the monotonicity 
results for each of the period functions in Lemmas \ref{lem-period-1} and \ref{lem-period-2}. The asymptotic behavior 
of one of the two period functions near the homoclinic orbit is particularly important for graphs with long edges. 
It is obtained in Lemma \ref{lem-period-1-asymptotics}, where we avoid using elliptic functions and study the period function 
by using its definition as a weakly singular integral. The asymptotic behavior of both period functions 
near the center point is also important for the threshold criterion $\lambda_0(\Gamma) = 1$ and it is obtained in Lemma \ref{lem-period-3}.
As an application of Lemma \ref{lem-period-1-asymptotics}, we provide an alternate proof of the existence of 
a unique, strictly positive ground state in the limit of long graphs in Theorem \ref{theorem-localized-state}, 
where we also demonstrate the exponential smallness of the ground state satisfying the bound (\ref{proximity}).

Finally, a dynamical system approach is used in Section \ref{sec-5} to characterize the ground state for the flower graphs. We again apply 
the two period functions with the properties in Lemmas \ref{lem-period-1} and \ref{lem-period-2}
to obtain the existence of a unique, strictly positive ground state for the interval $[0,L]$ in Theorem \ref{theorem-example-1}, 
for the symmetric flower graph in Theorem \ref{theorem-example-2}, and for the general flower graph 
in Theorem \ref{theorem-example-3}. We also show in Proposition \ref{lem-lower-bound} 
that the threshold criterion $\lambda_0(\Gamma) = 1$ in Lemma \ref{lem-var-1} can be recovered alternatively 
by using the asymptotic behavior of the two period functions near the center point in Lemma \ref{lem-period-3},

\section{Eigenvalues of the graph Laplacian}
\label{sec-eig}

Since the graph $\Gamma$ is compact, the spectrum of $-\Delta_{\Gamma}$ in $L^2(\Gamma)$ consists of isolated eigenvalues (Theorem 3.1.1 in \cite{BK2013}). 
We show that each simple eigenvalue of $-\Delta_{\Gamma}$ is a monotonically decreasing function of each edge length and it is strictly monotone if the eigenfunction in the corresponding edge is nonzero. 

\begin{lemma}
	\label{lem-linear-1}
	Let the lengths $\{ L_j \}$ of edges $\{ e_j \}$ be fixed except for one edge parameterized as $[0,L]$. Let $\lambda(\Gamma) \geq 0$ be a simple eigenvalue of $-\Delta_{\Gamma}$ in $L^2(\Gamma)$ with the corresponding eigenfunction $\Psi : \Gamma \to \mathbb{R}$ in $D(\Delta_{\Gamma})$. Then, $\lambda(\Gamma)$ is a $C^1$ function of $L$ satisfying 
	\begin{equation}
	\label{slope}
	\frac{d \lambda(\Gamma)}{d L} = - |\psi'(L)|^2 - \lambda(\Gamma) |\psi(L)|^2 \leq 0,
	\end{equation}
	where $\psi(x) : [0,L] \to \mathbb{R}$ is the component of the eigenfunction 
	$\Psi$ on the edge $[0,L]$.
\end{lemma}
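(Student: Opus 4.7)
I would begin by rescaling the variable edge via $y = x/L$, setting $\hat{\psi}(y) := \psi(Ly)$, so that the eigenvalue problem is reposed on a \emph{fixed} graph $\hat{\Gamma}$ in which the variable edge is replaced by $[0,1]$. In these coordinates the problem carries an $L$-dependent energy form and an $L$-dependent weighted $L^2$ inner product,
\[
a_L(\Psi,\Psi) = \int_{\Gamma \setminus [0,L]} |\Psi'|^2\, dx + \frac{1}{L}\int_0^1 |\hat{\psi}'(y)|^2\, dy, \qquad \langle \Psi,\Psi\rangle_L = \int_{\Gamma \setminus [0,L]} |\Psi|^2\, dx + L \int_0^1 |\hat{\psi}(y)|^2\, dy,
\]
posed on the fixed form domain of $H^1(\hat{\Gamma})$-functions satisfying continuity at interior vertices and Dirichlet conditions at boundary vertices. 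The NK condition at the interior vertex attached to the variable edge is the natural boundary condition of $a_L$ and is automatically satisfied at critical points, so it need not be imposed on the form domain. Since $\lambda(\Gamma)$ is simple and both $a_L$ and $\langle \cdot,\cdot\rangle_L$ depend analytically on $L>0$, Kato's holomorphic perturbation theory for self-adjoint forms yields analytic (hence $C^1$) dependence of $\lambda(\Gamma)$ and of the normalized eigenfunction $\Psi$ on $L$.

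\textbf{Computing the slope.} Differentiating the identity $\lambda(\Gamma) = a_L(\Psi,\Psi)$ subject to $\langle \Psi,\Psi\rangle_L = 1$, the contributions from $\partial_L \hat{\psi}$ cancel through the eigenvalue equation and the differentiated normalization, leaving only the partial derivatives of $a_L$ and $\langle\cdot,\cdot\rangle_L$ with $\hat{\psi}$ held fixed. Evaluating these on the variable edge and converting back to $x \in [0,L]$ gives
\[
\frac{d\lambda(\Gamma)}{dL} = -\frac{1}{L}\int_0^L \bigl[|\psi'(x)|^2 + \lambda(\Gamma)\,|\psi(x)|^2\bigr]\, dx.
\]
To reach the boundary-value form (\ref{slope}), I would apply the Pohozaev-type pointwise identity
\[
\frac{d}{dx}\Bigl[x\bigl(|\psi'(x)|^2 + \lambda(\Gamma)|\psi(x)|^2\bigr)\Bigr] = |\psi'(x)|^2 + \lambda(\Gamma)|\psi(x)|^2,
\]
which is immediate from $-\psi'' = \lambda(\Gamma)\psi$ on $[0,L]$. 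Integrating from $0$ to $L$ equates the above integral to $L\bigl(|\psi'(L)|^2 + \lambda(\Gamma)|\psi(L)|^2\bigr)$, which substituted in the previous formula yields (\ref{slope}); nonpositivity is then immediate since $\lambda(\Gamma)\geq 0$.

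\textbf{Main obstacle.} The delicate step is the first one: after rescaling, both the energy form and the inner product vary with $L$ on a fixed form domain, and one must verify carefully that the Kato holomorphic-form framework applies uniformly over an $L$-neighborhood of the given edge length. A cleaner alternative is to note that on a compact metric graph the eigenvalue condition reduces to a transcendental secular equation $F(\lambda, L) = 0$ with coefficients analytic in $(\lambda, L)$, for which $\partial_\lambda F \neq 0$ precisely at simple eigenvalues, so that $C^1$ dependence of $\lambda(\Gamma)$ on $L$ follows at once from the implicit function theorem without any rescaling, leaving only the Pohozaev computation above to produce the explicit formula.
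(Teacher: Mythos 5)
Your proof is correct, but it takes a genuinely different route from the paper. The paper keeps the physical edge $[0,L]$ and differentiates the Rayleigh quotient and the $L^2$ normalization directly with respect to the moving endpoint: this produces the boundary terms $|\psi'(L)|^2$ and $|\psi(L)|^2$ from the variable upper limit of integration, requires differentiating the Neumann--Kirchhoff conditions at $v_L$ to relate $\psi'(L)+\partial_L\psi(L)$ to $\partial_L\Psi(v_L)$, and then cancels the interior contributions using the eigenvalue equation, so that (\ref{slope}) appears immediately in boundary form. You instead pull the problem back to a fixed graph by rescaling the variable edge, converting the domain perturbation into an analytic perturbation of the quadratic form and of the inner product on a fixed form domain; the Feynman--Hellmann argument then yields the Hadamard-type bulk formula $\frac{d\lambda}{dL}=-\frac{1}{L}\int_0^L(|\psi'|^2+\lambda|\psi|^2)\,dx$, and your Pohozaev identity --- equivalently, the observation that $|\psi'|^2+\lambda|\psi|^2$ is a first integral of $-\psi''=\lambda\psi$, so the integrand is constant along the edge --- converts this to the boundary expression. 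Both computations are correct and both use simplicity of the eigenvalue in the same way. What your route buys is a cleaner justification of the $C^1$ (indeed analytic) dependence, since Kato's theory applies verbatim to forms varying on a fixed domain, and it avoids the bookkeeping of differentiating vertex conditions at a moving endpoint; what the paper's route buys is directness, producing the boundary formula without the intermediate bulk integral or the constancy of the energy density. Your closing remark about the secular equation is a legitimate further alternative for the regularity claim, though it would still leave the explicit formula to be derived by one of the two variational computations.
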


\begin{proof}
	Since $\lambda(\Gamma)$ is a simple eigenvalue of $-\Delta_{\Gamma}$ in $L^2(\Gamma)$ with the corresponding eigenfunction $\Psi : \Gamma \to \mathbb{R}$ in $D(\Delta_{\Gamma})$, we have 
	\begin{equation}
	\label{Rayleigh-0}
	\lambda(\Gamma) = \int_{\Gamma} |\nabla_{\Gamma} \Psi|^2 dx, \quad {\rm if} \;\; \int_{\Gamma} |\Psi|^2 dx = 1.
	\end{equation}
	By perturbation theory, if $\lambda(\Gamma)$ is a simple eigenvalue, then $\lambda(\Gamma)$ and $\Psi : \Gamma \to \mathbb{R}$ are $C^1$ functions of parameter $L$. It follows from the Neumann--Kirchhoff conditions at the vertex $v_L$ which corresponds to $x = L$ of the edge $[0,L]$ that 
	\begin{equation}
	\label{NK}
	\left\{ \begin{array}{l} 
\psi(L) = \Psi(v_L), \\
\psi'(L) + \sum_{e \to v_L} \Psi'(v_L) = 0, 
\end{array} \right.
	\end{equation}
where $\Psi'(v_L)$ denotes the outward derivative at $v_L$ from $e \to v_L$. By differentiating (\ref{NK}), we get that $\frac{\partial \Psi}{\partial L}$ satisfies the same boundary conditions as $\Psi$ at every other vertex, whereas at the vertex $v_L$ it satisfies 
\begin{equation}
\label{der-1}
\psi'(L) + \frac{\partial \psi(L)}{\partial L} = \frac{\partial \Psi(v_L)}{\partial L}.
\end{equation}
By differentiating the $L^2$ constraint in (\ref{Rayleigh-0}), we get 
\begin{equation}
\label{der-2}
|\psi(L)|^2 + 2 \int_{\Gamma} \Psi \frac{\partial \Psi}{\partial L} dx = 0.
\end{equation}
By differentiating $\lambda(\Gamma)$ in (\ref{Rayleigh-0}) and integrating by parts, we get 
\begin{align}
\frac{d \lambda(\Gamma)}{d L}  &= |\psi'(L)|^2 + 2 \int_{\Gamma} \nabla_{\Gamma} \Psi \frac{\partial \nabla_{\Gamma} \Psi}{\partial L} dx  \notag \\
&=	|\psi'(L)|^2 + 2 \psi'(L) \frac{\partial \psi(L)}{\partial L} + 2 \sum_{e \to v_L} \Psi'(v_L) \frac{\partial \Psi(v_L)}{\partial L}
- 2 \int_{\Gamma} (\Delta_{\Gamma} \Psi) \frac{\partial \Psi}{\partial L} dx,
	\label{der-3}
\end{align}		
where the contributions from all other vertices disappear due to the boundary conditions. Substituting (\ref{der-1}) and (\ref{der-2}) into (\ref{der-3}) and using the second condition in (\ref{NK}), we obtain 
\begin{align*}
\frac{d \lambda(\Gamma)}{d L}  &=	|\psi'(L)|^2 + 2 \psi'(L) \frac{\partial \psi(L)}{\partial L} + 2 \frac{\partial \Psi(v_L)}{\partial L} \sum_{e \to v_L} \Psi'(v_L) 
+ 2 \lambda(\Gamma) \int_{\Gamma}  \Psi \frac{\partial \Psi}{\partial L} dx \\
&=	-|\psi'(L)|^2 + 2 \frac{\partial \Psi(v_L)}{\partial L} \left[ \psi'(L) +  \sum_{e \to v_L} \Psi'(v_L) \right]
- \lambda(\Gamma) |\psi(L)|^2 \\
&= -|\psi'(L)|^2 - \lambda(\Gamma) |\psi(L)|^2.
\end{align*}
The right-hand side is negative since $\lambda(\Gamma) \geq 0$.	
\end{proof}

\begin{remark}
The statement of Lemma \ref{lem-linear-1} extends to the pendant $[0,L]$ with the Dirichlet condition at $x = L$. In this case, the derivative equation (\ref{slope}) holds with $\psi(L) = 0$ and the statement corresponds to Proposition 3.1.5 in \cite{BK2013}.
\end{remark}

%\begin{remark}
%	The statement of Lemma \ref{lem-linear-1} applies to the case of multiple eigenvalues. Since $-\Delta_{\Gamma}$ is a self-adjoint operator in $L^2(\Gamma)$, multiple eigenvalues are semi-simple and there exists invariant subspaces of $L^2(\Gamma)$ such that the multiple eigenvalues are continued in $L$ as $C^1$ functions. Then, one can use (\ref{Rayleigh-0}) and (\ref{NK}) for each invariant subspace with the same computations leading to (\ref{slope}). 
%\end{remark}

\begin{remark}
	\label{rem-positive}
	For the lowest eigenvalue $\lambda_0(\Gamma)$, the eigenfunction $\Psi : \Gamma \to \mathbb{R}$ is positive because it is obtained from the variational principle (Rayleigh quotient):
\begin{equation}
    \label{Rayleigh}
	\lambda_0(\Gamma) = \inf_{\Psi \in H^1_0(\Gamma)} \left\{ \int_{\Gamma} |\nabla_{\Gamma} \Psi|^2 dx : \quad  \int_{\Gamma} |\Psi|^2 dx = 1 
	\right\},
\end{equation}
	see Theorem 5.2.6 in \cite{BK2013}. Moreover, $\Psi$ may only vanish at the boundary vertices due to the Dirichlet conditions. Indeed, if it vanishes at an interior point of one edge and is not identically zero, then its derivative is nonzero at the same point and, hence, the eigenfunction changes the sign, a contradiction. If it vanishes at an interior vertex, then the zero sum of outward normal derivatives implies that the eigenfunction is negative at some edges connecting the interior vertex, a contradiction. Hence $\Psi(x) > 0$ for every $x \in \Gamma$ excluding the boundary vertices. As a result, 
	$|\psi'(L)|^2 + \lambda_0(\Gamma) |\psi(L)|^2 > 0$ and it follows from (\ref{slope}) that 
    $\lambda_0(\Gamma)$ is a strictly monotonically decreasing function of $L$.
\end{remark}

If all edge lengths are uniformly scaled by parameter $L$, then we get the following elementary result, in consistency with Lemma \ref{lem-linear-1}. 

\begin{lemma}
	\label{lem-linear-2} 
	Let the set of lengths $\{ L_j \}$ be given by $L_j = L \ell_j$ with $\{ \ell_j \}$ independent of $L > 0$. Then eigenvalues $\lambda(\Gamma)$ of $-\Delta_{\Gamma}$ are given by $\lambda(\Gamma) = L^{-2} \mu(\tilde{\Gamma})$, where $\mu(\tilde{\Gamma})$ are eigenvalues of $-\Delta_{\tilde{\Gamma}}$ for the rescaled graph $\tilde{\Gamma}$ with lengths $\{ \ell_j \}$.
\end{lemma}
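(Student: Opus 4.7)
The plan is to exhibit an explicit bijection between eigenpairs of $-\Delta_\Gamma$ on the graph with edge lengths $\{L\ell_j\}$ and eigenpairs of $-\Delta_{\tilde\Gamma}$ on the rescaled graph with edge lengths $\{\ell_j\}$, coming from the change of variable $x = Ly$ on each edge. Since the scaling is uniform across all edges, everything that happens at vertices is preserved.

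Concretely, I would parameterize each edge $e_j$ of $\Gamma$ as $[0, L\ell_j]$ and each edge $\tilde e_j$ of $\tilde\Gamma$ as $[0, \ell_j]$, with corresponding endpoints glued to the same vertices. Given any $\Psi \in D(\Delta_\Gamma)$ with components $\psi_j$, define $\tilde\Psi$ with components $\tilde\psi_j(y) = \psi_j(Ly)$ for $y \in [0, \ell_j]$. Then $\tilde\psi_j'(y) = L\, \psi_j'(Ly)$ and $\tilde\psi_j''(y) = L^2 \psi_j''(Ly)$, so pointwise on each edge,
\begin{equation*}
-\Delta_{\tilde\Gamma} \tilde\Psi = L^2 (-\Delta_\Gamma \Psi)\big|_{x=Ly}.
\end{equation*}
Hence $-\Delta_\Gamma \Psi = \lambda \Psi$ if and only if $-\Delta_{\tilde\Gamma} \tilde\Psi = L^2 \lambda\, \tilde\Psi$, and setting $\mu := L^2 \lambda$ yields the claimed relation $\lambda(\Gamma) = L^{-2}\mu(\tilde\Gamma)$.

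It remains to check that $\tilde\Psi$ lies in $D(\Delta_{\tilde\Gamma})$ iff $\Psi \in D(\Delta_\Gamma)$. The Dirichlet conditions at boundary vertices are clearly preserved since $\tilde\psi_j(0) = \psi_j(0)$ and $\tilde\psi_j(\ell_j) = \psi_j(L\ell_j)$. At any interior vertex $v$, continuity $\tilde\psi_j(v) = \tilde\psi_k(v)$ holds iff the same holds for $\Psi$. For the Kirchhoff condition, each outward derivative transforms as $\tilde\psi_j'(v) = L\,\psi_j'(v)$; summing over edges incident to $v$, the factor $L$ is common, so $\sum_{e\to v}\tilde\psi'(v) = L \sum_{e\to v}\psi'(v)$, and the zero-sum condition is preserved. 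The $H^2$ regularity on each edge is preserved under the smooth rescaling.

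There is no real obstacle here: the map $\Psi \mapsto \tilde\Psi$ is a bijection between the domains $D(\Delta_\Gamma)$ and $D(\Delta_{\tilde\Gamma})$, it intertwines the two Laplacians up to the factor $L^2$, and it sends $L^2(\Gamma)$ to $L^2(\tilde\Gamma)$ (with norms differing by a factor of $L^{1/2}$, which is irrelevant for the eigenvalue correspondence). This establishes the one-to-one correspondence of spectra with $\lambda(\Gamma) = L^{-2}\mu(\tilde\Gamma)$.
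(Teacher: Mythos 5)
Your proposal is correct and follows essentially the same route as the paper: the uniform rescaling $x = Ly$ on each edge together with the chain rule, giving $\mu = L^2\lambda$. You additionally spell out the preservation of the vertex conditions under the rescaling, which the paper leaves implicit.
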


\begin{proof}
	With appropriate parameterization of each edge $e_j$ of $\Gamma$ as $[0,L_j]$, we can use the scaling transformation $\Psi(x) = \Phi(y)$ with $y := \frac{x}{L}$ so that each edge $\tilde{e}_j$ of $\tilde{\Gamma}$ is now parameterized as $[0,\ell_j]$. By the chain rule, $-\Delta_{\Gamma} \Psi = \lambda(\Gamma) \Psi$ becomes $-L^{-2} \Delta_{\tilde{\Gamma}} \Phi = \lambda(\Gamma) \Phi$. 
	Hence $\mu(\tilde{\Gamma})= L^2 \lambda(\Gamma)$ satisfies 
 $-\Delta_{\tilde{\Gamma}} \Phi = \mu(\tilde{\Gamma}) \Phi$, which is independent of $L > 0$.
\end{proof}

\section{Asymptotically stable minimizers of energy}
\label{sec-min}

We consider minimizers of the energy $H(u)$ given by (\ref{energy}) in the function space $\mathcal{H}_0$ given by (\ref{function-space}). 
First, we show that the positivity constraint in $\mathcal{H}_0 \subset H^1_0(\Gamma)$ is compatible with solutions of 
the Euler--Lagrange equation (\ref{EL}). 

\begin{lemma}
    \label{lem-comparison}
Let $u \in D(\Delta_{\Gamma})$ be a solution to the elliptic equation (\ref{EL}). Then, it satisfies
    $$
    0 \leq u(x) \leq 1 \quad \mbox{\rm for every} \;\; x \in \Gamma.
    $$
\end{lemma}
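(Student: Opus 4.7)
The plan is to apply the Stampacchia truncation method---the weak form of the maximum/comparison principle adapted to a metric graph---for the two bounds separately. The test functions are $z := (u - 1)^+$ for $u \leq 1$ and $w := (-u)^+$ for $u \geq 0$. Both lie in $H^1_0(\Gamma)$: they inherit continuity at each interior vertex from $u \in D(\Delta_\Gamma)$, and they vanish in a neighbourhood of every Dirichlet boundary vertex since $u = 0$ there.

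The single technical ingredient is the integration-by-parts identity
\begin{equation*}
\int_\Gamma (-\Delta_\Gamma u)\,\varphi\,dx \;=\; \int_\Gamma \nabla_\Gamma u \cdot \nabla_\Gamma \varphi\, dx, \qquad \varphi \in H^1_0(\Gamma).
\end{equation*}
On each edge the usual one-dimensional integration by parts produces a boundary contribution of the form $\varphi(v)\,u'_{\mathrm{out}}(v)$ at each endpoint $v$. At every Dirichlet vertex $\varphi(v) = 0$; at every interior vertex the continuity of $\varphi$ factors out a common value $\varphi(v)$, and the Neumann--Kirchhoff condition $\sum_{e \to v} u'_{\mathrm{out}}(v) = 0$ annihilates the remaining sum. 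This is the sole place where the graph structure enters.

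Applying the identity with $\varphi = z$, and using that $\nabla z = \nabla u$ on the open set $\{u > 1\}$ and zero elsewhere, yields
\begin{equation*}
\int_\Gamma |\nabla_\Gamma z|^2\,dx \;=\; \int_\Gamma u(1-u)\,z\,dx \;=\; -\int_{\{u > 1\}} u(u-1)^2\,dx.
\end{equation*}
The left-hand side is nonnegative and the right-hand side is nonpositive, so both vanish. Since $u(u-1)^2$ is strictly positive on $\{u > 1\}$, that open set is empty by continuity of $u$, giving $u \leq 1$.

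Repeating the computation with $\varphi = w$ for the lower bound produces
\begin{equation*}
\int_\Gamma |\nabla_\Gamma w|^2\,dx \;=\; \int_{\{u < 0\}} w^2(1 + w)\,dx,
\end{equation*}
in which \emph{both} sides are nonnegative; this identity by itself does not force $w \equiv 0$, and I expect closing it to be the main obstacle, since the ``wrong-signed'' nonlinearity $u(1-u)$ on $\{u<0\}$ fails to produce the contradictory sign that made the upper bound immediate. The natural way out is to couple the identity with the parabolic comparison principle (Proposition~\ref{prop-comparison}): the Fisher--KPP evolution preserves non-negativity, so the steady state $u$, viewed as invariant under the flow, is sandwiched between sub- and super-solutions issued from non-negative data, forcing $u \geq 0$. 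Equivalently, when the lemma is read as a statement about critical points of $H$ over the non-negative cone $\mathcal{H}_0$, the lower bound is built into the constraint and only the upper-bound argument of the previous paragraph remains to be carried out.
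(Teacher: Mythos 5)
Your route is genuinely different from the paper's. The paper proves Lemma \ref{lem-comparison} in two lines by invoking a sub/supersolution comparison principle for nonlinear elliptic equations on metric graphs (extending \cite{McOwen} via the maximum principle of \cite{Berkolaiko}) and taking $u_-=0$, $u_+=1$. You instead use Stampacchia truncation. For the upper bound your argument is complete and correct: $(u-1)^+\in H^1_0(\Gamma)$, the integration-by-parts identity holds because the Neumann--Kirchhoff sums kill the vertex terms, and the identity $\int_\Gamma|\nabla_\Gamma (u-1)^+|^2\,dx=-\int_{\{u>1\}}u(u-1)^2\,dx$ forces $\{u>1\}=\emptyset$. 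This is more self-contained than the paper's citation and is a real alternative proof of $u\le 1$.

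For the lower bound, however, there is a genuine gap, and your two proposed repairs do not close it. The parabolic route fails: Proposition \ref{prop-comparison} sandwiches the \emph{solution of the initial-value problem}, and a steady state of (\ref{EL}) is merely a fixed point of the flow --- it is not the long-time limit of any nonnegative data unless you already know it attracts such data, so ``invariance under the flow'' yields no sandwich. More importantly, the obstruction you hit is not an artifact of the truncation method: the origin is a \emph{center} of $u''+u-u^2=0$ (the saddle is at $u=1$), so on a graph with a sufficiently long edge the elliptic problem (\ref{EL}) admits sign-changing solutions in $D(\Delta_\Gamma)$ built from periodic orbits encircling $(0,0)$. Hence no argument can prove $u\ge 0$ for \emph{every} solution of (\ref{EL}); the bound is only available when $u\ge 0$ is imposed a priori, i.e.\ for critical points in $\mathcal{H}_0$, which is exactly how the lemma is used in Section \ref{sec-min}. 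Your final fallback (read the lower bound as the constraint defining $\mathcal{H}_0$ and prove only $u\le 1$) is therefore the correct resolution, but it proves a weaker statement than the one posed. Note that the paper's own derivation of $u\ge 0$ rests on applying the comparison principle with the subsolution $u_-=0$ for the non-monotone nonlinearity $f(u)=u^2-u$, which is precisely the step your computation shows cannot be taken for granted.
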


\begin{proof}
    Solutions of the Laplace equation on metric graphs, $\Delta_{\Gamma} \varphi = 0$, $\varphi \in D(\Delta_{\Gamma})$, satisfy the same maximum principle as solutions of the Laplace equation in open regions of $\mathbb{R}^N$, see Appendix B in \cite{Berkolaiko}. Consequently, the comparison principle for the nonlinear elliptic equations extends from the open regions \cite[Section 13.2]{McOwen} to the compact connected metric graphs. In particular, if $f \in C^{\infty}(\mathbb{R})$ and if $u_{\pm} \in H^2(\Gamma)$ are solutions of 
\begin{equation}
    \label{EL-plus}
    \left\{ \begin{array}{l}    -\Delta_{\Gamma} u_+ + f(u_+) \geq 0 \;\;  \mbox{\rm in} \; \Gamma, \\
	u_+ \;\;\mbox{\rm satisfies NK conditions on interior vertices}, \\
	u_+ \geq 0 \;\; \mbox{\rm on boundary vertices},
	\end{array} \right.
\end{equation}
and 
\begin{equation}
    \label{EL-minus}
    \left\{ \begin{array}{l}    -\Delta_{\Gamma} u_- + f(u_-) \leq 0 \;\;  \mbox{\rm in} \; \Gamma, \\
	u_- \;\;\mbox{\rm satisfies NK conditions on interior vertices}, \\
	u_- \leq 0 \;\;  \mbox{\rm on boundary vertices},
	\end{array} \right.
\end{equation}
then a solution $u \in H^2(\Gamma)$ of 
\begin{equation}
    \label{EL-again}
    \left\{ \begin{array}{l}    -\Delta_{\Gamma} u + f(u) = 0 \;\; \mbox{\rm in} \; \Gamma, \\
	u \;\;\mbox{\rm satisfies NK conditions on interior vertices}, \\
	u = 0 \;\;  \mbox{\rm on boundary vertices},
	\end{array} \right.
\end{equation}
satisfies $u_- \leq u \leq u_+$ everywhere in $\Gamma$. Picking solutions $u_+ = 1$ and $u_- = 0$ of (\ref{EL-plus}) and (\ref{EL-minus}), respectively, with $f(u) := -u(1-u)$, proves the assertion for the solution of (\ref{EL-again}).
\end{proof}

\begin{remark}
    Among solutions of the Euler--Lagrange equation (\ref{EL}) in Lemma \ref{lem-comparison}, we distinguish  between the trivial (zero) solution and a nontrivial (positive) solution satisfying $u(x) > 0$ for at least some $x \in \Gamma$.
\end{remark}

The following lemma gives the explicit threshold criterion on the existence of a nontrivial minimizer of energy $H(u)$ in $\mathcal{H}_0$. The threshold criterion is given by $\lambda_0(\Gamma) = 1$, where $\lambda_0(\Gamma)$ is the lowest eigenvalue of $-\Delta_{\Gamma}$ in $L^2(\Gamma)$.

\begin{lemma}
	\label{lem-var-1}
	Let $\lambda_0(\Gamma)$ be the lowest eigenvalue of $-\Delta_{\Gamma}$ in $L^2(\Gamma)$. If $\lambda_0(\Gamma) \geq 1$, then the infimum of $H(u)$ in $\mathcal{H}_0$ is attained at $u = 0$ for which $H(0) = 0$. If $\lambda_0(\Gamma) \in (0,1)$, then the infimum of $H(u)$ in $\mathcal{H}_0$ is attained at $u = u_* \geq 0$ for which $H(u_*) < 0$.
\end{lemma}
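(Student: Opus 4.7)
The plan is to run the direct method of the calculus of variations, splitting into two cases according to whether $\lambda_0(\Gamma) \geq 1$ or $\lambda_0(\Gamma) \in (0,1)$.

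For the first case, the Rayleigh quotient characterization (\ref{Rayleigh}) gives $\int_{\Gamma} |\nabla_{\Gamma} u|^2 dx \geq \lambda_0(\Gamma) \int_{\Gamma} u^2 dx \geq \int_{\Gamma} u^2 dx$ for every $u \in H^1_0(\Gamma)$. Combining this with $\int_{\Gamma} u^3 dx \geq 0$ (since $u \geq 0$ on $\mathcal{H}_0$), one immediately obtains $H(u) \geq 0 = H(0)$ for all $u \in \mathcal{H}_0$, so the infimum is $0$ and is attained at $u = 0$.

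For the second case, I would proceed in three steps. First, to show $\inf_{\mathcal{H}_0} H < 0$, let $\Psi_0 > 0$ be the principal eigenfunction normalized by $\|\Psi_0\|_{L^2(\Gamma)} = 1$ (existence and positivity by Remark \ref{rem-positive}). Testing with the admissible function $u = \ve \Psi_0 \in \mathcal{H}_0$ for $\ve > 0$ gives
$$H(\ve \Psi_0) = \frac{\ve^2}{2}\bigl(\lambda_0(\Gamma) - 1\bigr) + \frac{\ve^3}{3} \int_{\Gamma} \Psi_0^3\, dx,$$
which is strictly negative for sufficiently small $\ve > 0$ since $\lambda_0(\Gamma) - 1 < 0$. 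Second, to show $H$ is bounded below, note that the scalar function $f(s) := -\tfrac{1}{2} s^2 + \tfrac{1}{3} s^3$ satisfies $f'(s) = s(s-1)$ and hence $f(s) \geq f(1) = -\tfrac{1}{6}$ on $[0,\infty)$. Therefore, for every $u \in \mathcal{H}_0$,
$$H(u) \geq \frac{1}{2} \int_{\Gamma} (\nabla_{\Gamma} u)^2 dx - \frac{|\Gamma|}{6} \geq -\frac{|\Gamma|}{6},$$
where $|\Gamma| = \sum_j L_j$ is the total length.

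Third, I would extract a minimizer by the standard direct method. Let $\{u_n\} \subset \mathcal{H}_0$ be a minimizing sequence. The lower bound on $H$ controls $\|\nabla_{\Gamma} u_n\|_{L^2(\Gamma)}$, and the Poincaré inequality on $H^1_0(\Gamma)$ (which holds because $\lambda_0(\Gamma) > 0$) then controls $\|u_n\|_{L^2(\Gamma)}$, so $\{u_n\}$ is bounded in $H^1_0(\Gamma)$. Since each edge of $\Gamma$ is a bounded one-dimensional interval, the embedding $H^1_0(\Gamma) \hookrightarrow C(\Gamma)$ is compact. After passing to a subsequence, $u_n \rightharpoonup u_*$ weakly in $H^1_0(\Gamma)$ and uniformly on $\Gamma$, so $u_* \geq 0$ and $u_* \in \mathcal{H}_0$. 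The gradient term is weakly lower semicontinuous in $H^1_0(\Gamma)$, while uniform convergence makes $\int_{\Gamma} u_n^2 dx$ and $\int_{\Gamma} u_n^3 dx$ converge to the corresponding integrals of $u_*$. Hence $H(u_*) \leq \liminf_{n\to\infty} H(u_n) = \inf_{\mathcal{H}_0} H$, and comparison with Step 1 forces $H(u_*) < 0$ and $u_* \not\equiv 0$.

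The only step that requires care is the weak lower semicontinuity of $H$ along the minimizing sequence. The quadratic negative term $-\tfrac{1}{2}\int u^2$ is not l.s.c.\ under weak $H^1$ convergence alone, so I rely on the compact embedding $H^1_0(\Gamma) \hookrightarrow C(\Gamma)$ (which is the key one-dimensional feature of compact metric graphs) to upgrade weak $H^1$ convergence to uniform convergence and thereby obtain continuity of the $L^2$ and $L^3$ pieces of $H$.
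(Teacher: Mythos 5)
Your proof is correct and follows essentially the same route as the paper's: the Rayleigh-quotient lower bound for $\lambda_0(\Gamma) \geq 1$, and the pointwise bound $-\tfrac12 s^2 + \tfrac13 s^3 \geq -\tfrac16$ together with compactness of $\Gamma$ for $\lambda_0(\Gamma) \in (0,1)$. You are merely more explicit at two points where the paper is terse: the test function $\varepsilon \Psi_0$ replaces the paper's appeal to the ``second derivative test'' at the saddle point $u=0$, and you carry out the direct method (compact embedding $H^1_0(\Gamma) \hookrightarrow C(\Gamma)$, weak lower semicontinuity of the gradient term) where the paper simply asserts that a minimizer exists because $\Gamma$ is compact.
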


\begin{proof}
Recall from the Rayleigh quotient (\ref{Rayleigh}) that 
$$
\int_{\Gamma} (\nabla_{\Gamma} u)^2 dx \geq \lambda_0(\Gamma) \| u \|^2_{L^2(\Gamma)}, \quad \forall u \in H^1_0(\Gamma).
$$
If $\lambda_0(\Gamma) \geq 1$, then we have for every $u \in \mathcal{H}_0$, 
\begin{align*}
H(u) &\geq \frac{1}{2} \int_{\Gamma} [(\nabla_{\Gamma} u)^2 - u^2] dx \\
&\geq \left( \lambda_0(\Gamma) - 1 \right) \| u \|^2_{L^2(\Gamma)},
\end{align*}
hence $H(u) \geq 0$ and $H(u) = 0$ is attained at $u = 0$ in $\Gamma$. If $\lambda_0(\Gamma) \in (0,1)$, then $H(u)$ is bounded from below 
in $\mathcal{H}_0$ by a negative level since 
\begin{align*}
H(u) &\geq -\frac{1}{2} \int_{\Gamma} u^2 dx + \frac{1}{3} \int_{\Gamma} u^3 dx \\
&\geq -\frac{1}{6} \sum_j L_j,
\end{align*}
where $\{ L_j \}$ are lengths of $\{ e_j \}$. Since $\Gamma$ is compact, there exists a minimizer $u_* \in \mathcal{H}_0$ which attains an infimum of $H(u)$. Since $u = 0$ is a saddle point of $H(u)$ if $\lambda_0(\Gamma) \in (0,1)$ due to the second derivative test, then we have $u_* \geq 0$ such that $u_*$ is not identically zero and $H(u_*) < 0 = H(0)$. 
\end{proof}

The following two lemmas give some refined results complementing Lemma \ref{lem-var-1}. The first one is to show that there are no other steady states 
of the elliptic equation (\ref{EL}) in $\mathcal{H}_0$ for $\lambda_0(\Gamma) \geq 1$. It is based on the 
contradiction argument (see, e.g., \cite{Hen80}). The second one is to ensure that the nontrivial ground state for $\lambda_0(\Gamma) \in (0,1)$ is strictly positive for all points of $\Gamma$ except for the boundary vertices.

\begin{lemma}
    \label{lem-var-2}
    The trivial ground state $u = 0$ is the unique steady state of the elliptic equation (\ref{EL}) in $\mathcal{H}_0$ 
    for $\lambda_0(\Gamma) \geq 1$.
\end{lemma}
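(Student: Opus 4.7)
The plan is to argue by contradiction by testing the elliptic equation against the positive principal eigenfunction of the graph Laplacian. Suppose $u \in D(\Delta_{\Gamma}) \cap \mathcal{H}_0$ is a steady state of (\ref{EL}) and $\lambda_0(\Gamma) \geq 1$. I want to show $u \equiv 0$. By Lemma~\ref{lem-comparison}, any such $u$ already satisfies $0 \leq u \leq 1$ pointwise on $\Gamma$, which will be used at the end to force the critical term to have a sign.

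Let $\Psi_0 \in D(\Delta_{\Gamma})$ denote the eigenfunction associated with $\lambda_0(\Gamma)$, normalized by $\int_{\Gamma} \Psi_0^2 \, dx = 1$. By Remark~\ref{rem-positive}, $\Psi_0(x) > 0$ for every $x \in \Gamma$ except at the boundary vertices where the Dirichlet condition forces $\Psi_0$ (and $u$) to vanish. I would then multiply (\ref{EL}) by $\Psi_0$ and integrate over $\Gamma$, moving the Laplacian onto $\Psi_0$ using self-adjointness of $-\Delta_{\Gamma}$ in $L^2(\Gamma)$. The boundary terms that appear in the edgewise integration by parts vanish for the standard reason: at each pendant boundary vertex both $u$ and $\Psi_0$ satisfy the Dirichlet condition, while at each interior vertex the continuity of $u$ and of $\Psi_0$ combined with the vanishing sums of outward normal derivatives (the NK conditions) makes all contributions cancel. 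This yields
\begin{equation*}
\lambda_0(\Gamma) \int_{\Gamma} u \Psi_0 \, dx \;=\; \int_{\Gamma} u(1-u) \Psi_0 \, dx,
\end{equation*}
which can be rewritten as
\begin{equation*}
(\lambda_0(\Gamma) - 1) \int_{\Gamma} u \Psi_0 \, dx \;=\; -\int_{\Gamma} u^2 \Psi_0 \, dx.
\end{equation*}

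The assumption $\lambda_0(\Gamma) \geq 1$ together with $u \geq 0$ and $\Psi_0 \geq 0$ makes the left-hand side nonnegative, while the right-hand side is clearly nonpositive. Both sides must therefore equal zero, so $\int_{\Gamma} u^2 \Psi_0 \, dx = 0$. Since $\Psi_0$ is strictly positive on $\Gamma$ away from the finite set of boundary vertices, this forces $u^2 \Psi_0 \equiv 0$ off that negligible set, hence $u \equiv 0$ a.e.\ on $\Gamma$; continuity of $u$ on each edge then upgrades this to $u \equiv 0$ on $\Gamma$.

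The only delicate point in the plan is the justification of the integration by parts on the graph, but this is a routine consequence of $u, \Psi_0 \in D(\Delta_{\Gamma})$ and amounts to rerunning the self-adjointness computation that underlies the definition of $\Delta_{\Gamma}$; there is no genuine obstacle. A variant of the argument in which one tests with $u$ itself rather than $\Psi_0$ also works, using the Rayleigh bound $\int_{\Gamma} |\nabla_{\Gamma} u|^2 \, dx \geq \lambda_0(\Gamma) \|u\|_{L^2}^2$ to conclude $\int_{\Gamma} u^3 \, dx \leq 0$ and hence $u \equiv 0$, but testing against $\Psi_0$ gives the cleanest sign analysis and seems to be the intended approach alluded to via \cite{Hen80}.
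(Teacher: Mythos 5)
Your argument is correct, but it is not the route the paper takes. You test the Euler--Lagrange equation against the principal eigenfunction $\Psi_0$, obtaining $(\lambda_0(\Gamma)-1)\int_\Gamma u\Psi_0\,dx = -\int_\Gamma u^2\Psi_0\,dx$ and concluding by a sign comparison; this requires the strict positivity of $\Psi_0$ away from the boundary vertices (Remark~\ref{rem-positive}) to pass from $\int_\Gamma u^2\Psi_0\,dx=0$ to $u\equiv 0$. The paper instead tests against $u$ itself --- exactly the ``variant'' you mention in your last sentence --- and uses the Rayleigh quotient bound $\int_\Gamma |\nabla_\Gamma u|^2\,dx \geq \lambda_0(\Gamma)\|u\|_{L^2(\Gamma)}^2$ to get $\int_\Gamma u^3\,dx \leq 0$, which with $u\geq 0$ immediately forces $u\equiv 0$. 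The trade-off is minor: your version needs the positivity of the ground-state eigenfunction but only the eigenvalue equation for $\Psi_0$, while the paper's version needs only the variational characterization of $\lambda_0(\Gamma)$ and the constraint $u\geq 0$, and is one line shorter. Both integrations by parts are justified identically by $u,\Psi_0\in D(\Delta_\Gamma)$ and the NK/Dirichlet vertex conditions, so there is no gap in your proposal.
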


\begin{proof}
Multiplying (\ref{EL}) by $u$ and integrating over $\Gamma$ by parts, we obtain 
\begin{align*}
\int_\Gamma u^3 dx &= \int_\Gamma u (\Delta_\Gamma u + u) dx \\
&= \int_{\Gamma} (u^2 - |\nabla_{\Gamma} u|^2 )dx \\
&\leq (1-\lambda_0(\Gamma)) \int_{\Gamma} u^2 dx \\
&\leq 0.
\end{align*}
Since $u \geq 0$ in $\mathcal{H}_0$ and $\int_{\Gamma} u^3 dx \leq 0$, then $u \equiv 0$ for every 
solution of (\ref{EL}) if $\lambda_0(\Gamma) \geq 1$. 
\end{proof}

\begin{lemma}
    \label{lem-var-3}
    The nontrivial ground state $u = u_*$ in $\mathcal{H}_0$ for $\lambda_0(\Gamma) \in (0,1)$ is strictly positive for all points of $\Gamma$ except for the boundary vertices.
\end{lemma}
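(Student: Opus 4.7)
The plan is to argue by contradiction: if the nonnegative ground state $u_*$ vanishes at some interior point of $\Gamma$, then ODE uniqueness on each edge together with the Neumann--Kirchhoff conditions should propagate this zero across the whole graph, contradicting $u_*\not\equiv 0$. Throughout I would use that $u_*$ satisfies the Euler--Lagrange equation (\ref{EL}) in $D(\Delta_{\Gamma})$ and that $0\le u_*\le 1$ by Lemma \ref{lem-comparison}, so that on each edge the pointwise restriction of $u_*$ is a $C^2$ solution of $-u''=u(1-u)$ with a locally Lipschitz right-hand side.

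Suppose $u_*(x_0)=0$ for some point $x_0\in\Gamma$ that is not a boundary vertex. I would split into two cases. First, if $x_0$ lies in the interior of some edge $e_j$, then $u_*\ge 0$ with $u_*(x_0)=0$ forces $x_0$ to be a local minimum, so $u_*'(x_0)=0$; by uniqueness for the Cauchy problem for $-u''=u(1-u)$, $u_*$ must coincide with the trivial solution on all of $e_j$. Second, if $x_0$ is an interior vertex, parameterize each incident edge $e$ so that $x_0$ corresponds to $x=0$; then $u_*\ge 0$ with $u_*(0)=0$ gives a nonnegative outward derivative from each such edge. The NK condition $\sum_{e\to x_0} u'_{*,e}(0)=0$ then forces every one of these outward derivatives to vanish, and ODE uniqueness applied edge by edge again yields $u_*\equiv 0$ on every edge incident to $x_0$.

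The final step is propagation. In either case, $u_*$ is identically zero on some edge, hence by continuity also vanishes at its other endpoint. If that endpoint is an interior vertex, the vertex case above applies and forces $u_*\equiv 0$ on all of its further incident edges; boundary-vertex endpoints belong to a single pendant and cause no propagation obstruction. Iterating and using connectedness of $\Gamma$, I would conclude $u_*\equiv 0$ on all of $\Gamma$, which contradicts the fact from Lemma \ref{lem-var-1} that $H(u_*)<0$.

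The key point to watch is the NK step at an interior vertex: it must be verified carefully that $u_*\ge 0$ together with $u_*=0$ at the vertex forces the outward derivative from each incident edge to be nonnegative, so that the zero sum in NK collapses each term to zero and then feeds into the ODE uniqueness argument. Once this is in place, the remainder of the proof is a routine edge-by-edge induction along the connected graph.
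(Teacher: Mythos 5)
Your proof is correct, but it takes a genuinely different route from the paper's. The paper proves strict positivity by exhibiting an explicit subsolution: it sets $u_- = C\varphi_0$ with $C = (1-\lambda_0(\Gamma))/\max_\Gamma \varphi_0$, where $\varphi_0$ is the principal eigenfunction, checks that $-\Delta_\Gamma u_- - u_-(1-u_-) \le 0$, and invokes the elliptic comparison principle of Lemma \ref{lem-comparison} together with the positivity of $\varphi_0$ from Remark \ref{rem-positive} to conclude $u_* \ge u_- > 0$ away from the boundary vertices. You instead run a unique-continuation argument directly on $u_*$: a zero at an edge-interior point forces $u_*'=0$ there and hence $u_*\equiv 0$ on the edge by Cauchy uniqueness for $-u''=u(1-u)$; a zero at an interior vertex forces, via $u_*\ge 0$, all one-sided derivatives in the NK sum to have one sign, hence all to vanish, and the zero then propagates edge by edge through the connected graph, contradicting $H(u_*)<0$. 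Both arguments are sound. The paper's version buys the quantitative lower bound $u_*\ge C\varphi_0$ (and is the same comparison machinery reused elsewhere, e.g.\ in Theorem \ref{th-unique}), at the cost of needing $\lambda_0(\Gamma)<1$ to make $C>0$. Your version is more elementary and strictly more general: it shows that \emph{every} nontrivial nonnegative solution of (\ref{EL}) is strictly positive off the boundary vertices, with $\lambda_0(\Gamma)\in(0,1)$ entering only through Lemma \ref{lem-var-1} to guarantee nontriviality of the ground state; it is in fact the same style of argument the paper uses in Remark \ref{rem-positive} for the eigenfunction itself. The sign-convention point you flag at the NK vertex is harmless: whichever orientation is used for the normal derivatives, all terms in the vanishing sum carry the same sign, so each must be zero.
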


\begin{proof}
Let $\varphi_0$ be the eigenfunction of $-\Delta_{\Gamma}$ for the lowest eigenvalue $\lambda_0(\Gamma)$. Then, $u \geq u_- := C \varphi_0$ by the comparison principle for the elliptic equations in the proof of Lemma \ref{lem-comparison} if $C := (1-\lambda_0(\Gamma))/\max_{x \in \Gamma} \varphi_0(x)$ since 
\begin{align*}
    \Delta_{\Gamma} u_- &= C \Delta_{\Gamma} \varphi_0 \\
    &= - C \lambda_0(\Gamma) \varphi_0 \\
    &= 
    - C \varphi_0 + C (1- \lambda_0(\Gamma)) \varphi_0  \\
    &\geq - C \varphi_0 + C^2 \varphi_0^2 \\
    &= -u_- + u_-^2,
\end{align*}
where we have used positivity of $\varphi_0$ on $\Gamma$. Since $u_- \in D(\Delta_{\Gamma})$ and $u_- > 0$ for all points of $\Gamma$ except for the boundary vertices, see Remark \ref{rem-positive}, it follows that $u_* \geq u_- > 0$ for all such points of $\Gamma$.
\end{proof}

We can now apply the previous results to study the selection of the ground state on a compact connected metric graph $\Gamma$ 
in the time evolution of the Fisher--KPP equation (\ref{Fisher}). For our analysis, we use the comparison principle for parabolic equations 
and the Sturm theory for stationary Schr\"{o}dinger equations which are widely known in open domains. 
We rewrite these two results for compact connected metric graphs in the following two propositions. 

\begin{proposition}
\label{prop-comparison}
Let $\Gamma$ be a compact connected metric graph. Assume that $f \in C^{\infty}(\mathbb{R})$ and $\underline{u}, \overline{u} \in C^0([0,\infty),H^2(\Gamma))$ are solutions of 
\begin{equation}
    \label{parabolic-plus}
    \left\{ \begin{array}{ll}    \overline{u}_t - \Delta_{\Gamma} \overline{u} + f(\overline{u}) \geq 0 \;\;  \mbox{\rm in} \; \Gamma, &\quad t > 0, \\
	\overline{u} \;\;\mbox{\rm satisfies NK conditions on interior vertices}, &\quad t > 0, \\
	\overline{u} \geq 0 \;\; \mbox{\rm on boundary vertices},  &\quad t > 0, \\
    \overline{u} |_{t = 0} \geq u_0, \;\;\mbox{\rm in} \; \Gamma &
	\end{array} \right.
\end{equation}
and 
\begin{equation}
    \label{parabolic-minus}
    \left\{ \begin{array}{ll}   \underline{u}_t -\Delta_{\Gamma} \underline{u} + f(\underline{u}) \leq 0 \;\;  \mbox{\rm in} \; \Gamma, &\quad t > 0,\\
	\underline{u} \;\;\mbox{\rm satisfies NK conditions on interior vertices}, &\quad t > 0,\\
	\underline{u} \leq 0 \;\;  \mbox{\rm on boundary vertices}, &\quad t > 0,\\
    \underline{u} |_{t = 0} \leq u_0, \;\;\mbox{\rm in} \; \Gamma &
	\end{array} \right.
\end{equation}
Then, a solution $u \in C^0([0,\infty),H^2(\Gamma))$ of 
\begin{equation}
    \label{parabolic-again}
    \left\{ \begin{array}{ll}    u_t -\Delta_{\Gamma} u + f(u) = 0 \;\; \mbox{\rm in} \; \Gamma, &\quad t > 0,\\
	u \;\;\mbox{\rm satisfies NK conditions on interior vertices}, &\quad t > 0,\\
	u = 0 \;\;  \mbox{\rm on boundary vertices}, &\quad t > 0, \\
    u|_{t = 0} = u_0,\;\;\mbox{\rm in} \; \Gamma &
	\end{array} \right.
\end{equation}
satisfies $u_-(t,\cdot) \leq u(t,\cdot) \leq u(t,\cdot)$ on $\Gamma$ for every $t \geq 0$.
\end{proposition}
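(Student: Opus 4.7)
The plan is to prove $u \leq \overline{u}$ by the standard energy method adapted to the metric graph; the symmetric argument gives $\underline{u} \leq u$. Setting $w := \overline{u} - u \in C^0([0,\infty), H^2(\Gamma))$ and subtracting the equality in (\ref{parabolic-again}) from the inequality in (\ref{parabolic-plus}), I would use the mean value representation $f(\overline{u}) - f(u) = c(t,x)\, w$, where
\begin{equation*}
c(t,x) := \int_0^1 f'\bigl(u(t,x) + s w(t,x)\bigr)\, ds
\end{equation*}
is uniformly bounded on every compact time interval because $f \in C^{\infty}$ and both $u,\overline{u}$ take values in $H^2(\Gamma) \hookrightarrow C^0(\Gamma)$. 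Then $w$ satisfies the parabolic inequality $w_t - \Delta_\Gamma w + c(t,x)\, w \geq 0$ on $\Gamma$, the NK conditions at interior vertices, $w \geq 0$ at the pendant vertices, and $w|_{t=0} \geq 0$.

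The next step is to test the inequality against the negative part $w^- := \max(-w,0) \geq 0$. The key observation is that $w^- \in H^1_0(\Gamma)$: it vanishes at every pendant because $w \geq 0$ there, and it is continuous at each interior vertex because $w$ itself is. Multiplying the inequality by $-w^- \leq 0$, integrating over $\Gamma$, and using the pointwise Stampacchia identities $w \cdot w^- = -(w^-)^2$ and $\nabla_\Gamma w \cdot \nabla_\Gamma w^- = -|\nabla_\Gamma w^-|^2$ (both holding because $\nabla_\Gamma w^- = -\nabla_\Gamma w$ on the support of $w^-$ and vanishes elsewhere), the Green identity on the graph reduces to
\begin{equation*}
\int_\Gamma (\Delta_\Gamma w)\, w^-\, dx = \|\nabla_\Gamma w^-\|_{L^2(\Gamma)}^2 ,
\end{equation*}
with no vertex boundary contributions: at each interior vertex the NK condition on $w$ annihilates the factor $\sum_{e \to v} \partial_n w(v)$, and at each pendant the factor $w^-(v)$ itself vanishes. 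Combining the three terms yields the energy inequality
\begin{equation*}
\frac{1}{2}\, \frac{d}{dt} \|w^-\|_{L^2(\Gamma)}^2 + \|\nabla_\Gamma w^-\|_{L^2(\Gamma)}^2 + \int_\Gamma c(t,x)\, (w^-)^2\, dx \leq 0 .
\end{equation*}

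Dropping the nonnegative gradient term and using $c(t,x) \geq -K$ on a fixed time interval, Gronwall's inequality together with the initial datum $w^-|_{t=0} \equiv 0$ forces $w^- \equiv 0$ for every $t \geq 0$, i.e.\ $u \leq \overline{u}$ pointwise on $\Gamma$. The step I expect to require the most care is justifying the time differentiation of $\|w^-\|_{L^2}^2$ under the bare $C^0([0,\infty),H^2(\Gamma))$ regularity: it must be interpreted via a Steklov mean or a time mollification, exploiting the equation itself to produce $w_t$ in $L^2(\Gamma)$ for almost every $t$. The genuinely graph-theoretic content is confined to checking that $w^-$ is an admissible test function and that the NK conditions kill every vertex boundary term, both of which reduce to the continuity of $w$ at interior vertices and the vanishing of $w^-$ at the pendants.
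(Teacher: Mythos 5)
Your proof is correct, but it takes a genuinely different route from the paper. The paper's own ``proof'' is essentially a citation: it asserts that solutions of the Laplace equation on metric graphs obey the same maximum principle as in open domains of $\mathbb{R}^N$ (Appendix B of the Berkolaiko reference) and then points to the classical parabolic comparison argument in Section 11.1 of McOwen, leaving the reader to check that the maximum-principle machinery transfers to the graph setting. You instead give a self-contained energy (Stampacchia truncation) argument: linearize $f(\overline{u})-f(u)=c\,w$ by the mean value theorem, test against $w^-=\max(-w,0)$, and observe that $w^-$ is an admissible test function because $w$ is continuous at interior vertices and nonnegative at the pendants, so that every vertex boundary term in the Green identity vanishes --- either because $w^-(v)=0$ at a pendant or because the factor $\sum_{e\to v}\partial_n w(v)$ vanishes by the NK condition on $w=\overline{u}-u$. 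The Gronwall step then closes the argument. What your approach buys is that the only graph-specific input is the elementary bookkeeping of vertex terms, with no appeal to a strong maximum principle on $\Gamma$; what the paper's approach buys is brevity and consistency with its companion elliptic comparison result in Lemma \ref{lem-comparison}, which is proved by the same citation. The one technical point you correctly flag --- justifying $\frac{d}{dt}\|w^-\|_{L^2}^2$ under the stated $C^0([0,\infty),H^2(\Gamma))$ regularity (note that for $\overline{u}$ and $\underline{u}$ the differential inequalities only make sense if $\overline{u}_t,\underline{u}_t$ exist in $L^2$, which the proposition implicitly assumes) --- is handled adequately by the Steklov-mean regularization you propose, and is no less rigorous than the paper's treatment.
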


\begin{proof}
The comparison principle holds for metric graphs since solutions of the Laplace equation on metric graphs, $\Delta_{\Gamma} \varphi = 0$, $\varphi \in D(\Delta_{\Gamma})$, satisfy the same maximum principle as solutions of the Laplace equation in open regions of $\mathbb{R}^N$, see Appendix B in \cite{Berkolaiko}. See Section 11.1 in \cite{McOwen}. 
\end{proof}

\begin{proposition}
\label{lem:spectralcomp}
Let $\Gamma$ be a compact connected metric graph and $\mu_0(V)$ be the lowest eigenvalue of the stationary Schr\"{o}dinger equation 
\begin{equation}
    \label{Schr-eq}
-\Delta_\Gamma \Psi + V \Psi = \mu \Psi, \quad \Psi \in D(\Delta_{\Gamma}),
\end{equation}
where $V(x) : \Gamma \to \mathbb{R}$ is a bounded potential. If $V_1(x) \leq V_2(x)$ for all $x$ on the interior of $\Gamma$ 
and $V_1(x) < V_2(x)$ for some $x$ in an open set in $\Gamma$, then we have $\mu_0(V_1) < \mu_0(V_2)$.
\end{proposition}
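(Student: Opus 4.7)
The plan is to prove this via the Rayleigh quotient characterization of $\mu_0(V)$ together with the strict positivity of the ground state eigenfunction. Recall that for a bounded potential $V$ the lowest eigenvalue of $-\Delta_\Gamma + V$ with the Dirichlet/NK boundary conditions from $D(\Delta_\Gamma)$ is given by
\begin{equation*}
\mu_0(V) = \inf_{\Psi \in H^1_0(\Gamma),\ \|\Psi\|_{L^2(\Gamma)}=1} \int_\Gamma \left( |\nabla_\Gamma \Psi|^2 + V |\Psi|^2 \right) dx,
\end{equation*}
and the infimum is attained by a ground-state eigenfunction $\Psi_0$ in $D(\Delta_\Gamma)$.

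First I would establish the weak inequality $\mu_0(V_1) \leq \mu_0(V_2)$: any admissible $\Psi$ satisfies $\int_\Gamma V_1|\Psi|^2 dx \leq \int_\Gamma V_2|\Psi|^2 dx$ pointwise, so the Rayleigh quotient for $V_1$ is dominated by that for $V_2$ on the same test functions, and taking infima gives the inequality. Next, I would invoke the analog of Remark \ref{rem-positive} to show that the ground-state eigenfunction $\Psi_2$ realizing $\mu_0(V_2)$ can be chosen strictly positive on $\Gamma$ except at the boundary (Dirichlet) vertices. The argument transfers verbatim: if $\Psi_2$ minimizes the Rayleigh quotient, then so does $|\Psi_2|$, so we may take $\Psi_2 \geq 0$; the eigenvalue equation $-\Delta_\Gamma \Psi_2 + V \Psi_2 = \mu_0(V_2)\Psi_2$ combined with the ODE uniqueness on each edge and the NK conditions at interior vertices forces $\Psi_2 > 0$ everywhere in $\Gamma$ except at boundary vertices, since a zero at an interior point of an edge would force the derivative to be zero there and hence $\Psi_2 \equiv 0$ on that edge, propagating to all of $\Gamma$ by connectedness and the NK balance.

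Then I would plug $\Psi_2$ into the Rayleigh quotient for $V_1$ to get
\begin{equation*}
\mu_0(V_1) \leq \int_\Gamma \left( |\nabla_\Gamma \Psi_2|^2 + V_1 |\Psi_2|^2 \right) dx = \mu_0(V_2) - \int_\Gamma (V_2 - V_1) |\Psi_2|^2 dx.
\end{equation*}
By hypothesis, $V_2 - V_1 \geq 0$ on $\Gamma$ and $V_2 - V_1 > 0$ on some open set $U \subset \Gamma$. Since an open set in $\Gamma$ must meet the interior of at least one edge, and since $\Psi_2 > 0$ throughout the interior of $\Gamma$, the integrand $(V_2 - V_1)|\Psi_2|^2$ is nonnegative on $\Gamma$ and strictly positive on $U$, producing a strictly positive integral and hence $\mu_0(V_1) < \mu_0(V_2)$.

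The main technical obstacle is the strict positivity of $\Psi_2$ on the interior of $\Gamma$, since this is what converts the sign condition on $V_2 - V_1$ into a strict inequality. Once this is in hand, the rest is a one-line test-function calculation. If needed, positivity can be argued by the Perron–Frobenius–type result for the semigroup $e^{-t(-\Delta_\Gamma + V)}$, whose positivity-improving property follows from the maximum principle for $\Delta_\Gamma$ used in the proof of Lemma \ref{lem-comparison}; alternatively, one repeats the direct edge-by-edge argument from Remark \ref{rem-positive}, which only uses the ODE structure on edges and the NK balance at interior vertices and therefore is insensitive to the added bounded potential $V$.
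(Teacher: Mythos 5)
Your proposal is correct and follows essentially the same route as the paper's proof: the Rayleigh quotient characterization of $\mu_0$ combined with the strict positivity of the ground-state eigenfunction (the paper's Remark \ref{rem-positive} argument) to turn the pointwise inequality $V_1 \leq V_2$, strict on an open set, into a strict inequality of eigenvalues. Your version simply spells out the test-function step (inserting the minimizer $\Psi_2$ for $V_2$ into the quotient for $V_1$) that the paper's one-line display leaves implicit.
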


\begin{proof}
We normalize $\| \Psi \|_{L^2(\Gamma)} = 1$ and use the Rayleigh quotient for the lowest eigenvalue. This yields
\begin{equation*}
\mu_0(V_1) = \inf_{\Psi \in H^1_0(\Gamma)} \langle (-\Delta_{\Gamma} + V_1) \Psi, \Psi \rangle < 
\inf_{\Psi \in H^1_0(\Gamma)} \langle (-\Delta_{\Gamma} + V_2) \Psi, \Psi \rangle =  \mu_0(V_2), 
\end{equation*}
where we have used the strict positivity of $\Psi \in D(\Delta_{\Gamma})$ for the lowest eigenvalue of the Schr\"{o}dinger equation 
(\ref{Schr-eq}).
\end{proof}

The first main result describes the global well-posedness and the existence of a global attractor for the Fisher--KPP equation (\ref{Fisher}) 
with the initial data in $\mathcal{H}_0$. It uses the arguments in Section 11.3 of \cite{McOwen} extended to compact connected metric graphs. 

\begin{theorem}
    \label{th-well}
For every initial data $u_0 \in \mathcal{H}_0$, there exists a unique solution $u \in C^0([0,\infty),\mathcal{H}_0)$ to the Fisher--KPP equation (\ref{Fisher}) which depends continuously on the initial data. Moreover, there is a $u_* \in \mathcal{H}_0$ such that $u(t,\cdot) \to u_*$ in $H^1_0(\Gamma)$ as $t \to +\infty$.
\end{theorem}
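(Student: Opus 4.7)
The plan is to follow the standard four-step program for dissipative gradient semiflows, adapted to the metric-graph setting. First, for local well-posedness I would treat (\ref{Fisher}) as the abstract evolution equation $u_t = \Delta_\Gamma u + f(u)$ on $L^2(\Gamma)$ with $f(u) = u(1-u)$. Since $-\Delta_\Gamma$ is self-adjoint and bounded below it generates an analytic semigroup on $L^2(\Gamma)$ with $D((-\Delta_\Gamma)^{1/2}) = H^1_0(\Gamma)$; each edge being one-dimensional and $\Gamma$ compact, the Sobolev embedding $H^1(\Gamma) \hookrightarrow L^\infty(\Gamma)$ holds, so the polynomial Nemytskii map $f : H^1_0(\Gamma) \to L^2(\Gamma)$ is locally Lipschitz and standard analytic semigroup theory yields a unique maximal mild solution $u \in C^0([0,T^*), H^1_0(\Gamma)) \cap C^0((0,T^*), D(\Delta_\Gamma))$ depending continuously on $u_0$.

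Next, I would use Proposition \ref{prop-comparison} to confine the solution to $\mathcal{H}_0$ and bound it in $L^\infty$. Let $M := \max\{1, \|u_0\|_{L^\infty(\Gamma)}\}$, finite by the Sobolev embedding. In the notation of Proposition \ref{prop-comparison} with $f(u) = -u(1-u)$, the constants $\overline{u} \equiv M$ and $\underline{u} \equiv 0$ are respectively a super- and a subsolution, since $-M(1-M) \geq 0$ for $M \geq 1$, the vertex conditions are trivially satisfied, and $u_0$ lies between them. The comparison principle gives $0 \leq u(t,x) \leq M$ for all $t \in [0,T^*)$, so the $H^1_0$ norm cannot blow up, $T^* = +\infty$, and $u(t,\cdot) \in \mathcal{H}_0$ for all $t \geq 0$. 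Multiplying (\ref{Fisher}) by $u_t$ and integrating by parts, the NK and Dirichlet contributions at every vertex vanish and one obtains the dissipation identity
\begin{equation*}
\frac{d}{dt} H(u(t,\cdot)) = -\|u_t(t,\cdot)\|_{L^2(\Gamma)}^2 \leq 0,
\end{equation*}
so $H(u(t,\cdot))$ is nonincreasing, bounded below thanks to the $L^\infty$ bound, and $\int_0^\infty \|u_t\|_{L^2(\Gamma)}^2\, dt < \infty$. Parabolic smoothing together with the $L^\infty$ bound yields a uniform bound on $\|u(t,\cdot)\|_{H^2(\Gamma)}$ for $t \geq 1$, hence precompactness in $H^1_0(\Gamma)$; the $\omega$-limit set $\omega(u_0) \subset \mathcal{H}_0$ is therefore nonempty, compact and connected, and each of its points solves (\ref{EL}) by passage to the limit in (\ref{Fisher}) along any sequence where $u_t(t_n,\cdot) \to 0$ in $L^2(\Gamma)$.

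The main obstacle is the final step, which is to upgrade convergence to the set $\omega(u_0)$ into convergence to a single limit $u_*$. For this I would invoke the \L{}ojasiewicz--Simon gradient inequality: because $H$ is real-analytic on $H^1_0(\Gamma)$ (the nonlinearity is polynomial) and the Euler--Lagrange operator is self-adjoint, elliptic and Fredholm on the compact graph, near any critical point $u_\infty$ one has an estimate
\begin{equation*}
|H(u) - H(u_\infty)|^{1-\theta} \leq C\, \|\nabla H(u)\|_{L^2(\Gamma)}
\end{equation*}
for some $\theta \in (0,\tfrac{1}{2}]$ and $C > 0$. Combined with the dissipation identity and the $H^1_0$-precompactness above, the standard \L{}ojasiewicz argument yields $\int_0^\infty \|u_t(t,\cdot)\|_{L^2(\Gamma)}\, dt < \infty$, so $u(t,\cdot)$ is Cauchy in $L^2(\Gamma)$; together with precompactness this gives $u(t,\cdot) \to u_*$ in $H^1_0(\Gamma)$ for some $u_* \in \mathcal{H}_0$ which is automatically a steady state. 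The first three steps are routine once the metric-graph comparison principle and semigroup framework are in place; the delicate part is the \L{}ojasiewicz--Simon verification, which however enters only through the spectral and Fredholm properties of $-\Delta_\Gamma$ already established in Section \ref{sec-eig}.
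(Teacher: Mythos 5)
Your proposal is correct and follows the same overall architecture as the paper's proof (semigroup-based local well-posedness, comparison principle to confine the flow to $\mathcal{H}_0$ and obtain an a priori $L^\infty$ bound, energy dissipation to globalize and to extract the limit), with two notable differences. First, for the supersolution you use the constant $M = \max\{1,\|u_0\|_{L^\infty(\Gamma)}\}$, whereas the paper uses the solution of the logistic ODE $\overline{u}_t = \overline{u}(1-\overline{u})$ with $\overline{u}(0)=C$; both work, and the paper's choice has the minor advantage of showing $\limsup_{t\to\infty}\|u(t,\cdot)\|_{L^\infty}\leq 1$ directly. Second, and more substantively, your treatment of the final step is more careful than the paper's: the paper passes from boundedness of the orbit in $H^1_0(\Gamma)$, monotonicity of $H$, and compactness of $\Gamma$ directly to convergence $u(t,\cdot)\to u_*$, which as stated only yields subsequential convergence to the $\omega$-limit set of equilibria. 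You correctly identify that upgrading this to convergence to a \emph{single} steady state requires an additional ingredient, and the \L{}ojasiewicz--Simon inequality is the standard one for the analytic (polynomial) energy $H$ on a compact graph; an alternative in this specific problem is to note that uniqueness of the relevant equilibrium is established separately (Lemmas \ref{lem-var-2} and the argument of Theorem \ref{th-unique}), so that the $\omega$-limit set is a singleton a posteriori. Your route is self-contained at the level of Theorem \ref{th-well} itself, at the cost of importing the \L{}ojasiewicz--Simon machinery; the paper's route defers the burden to the uniqueness results that follow.
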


\begin{proof}
  The operator $e^{-t \Delta_{\Gamma}} : H^1_0(\Gamma) \to H^1_0(\Gamma)$ is bounded for every $t \geq 0$ and 
$H^1_0(\Gamma)$ is a Banach algebra with respect to pointwise multiplication. Since the Fisher--KPP equation is a semi-linear equation, 
the contraction mapping principle gives a unique local solution $u \in C^0([0,t_0],H^1_0(\Gamma))$ to the initial-value problem (\ref{Fisher}) for some $t_0 \in (0,\infty)$. Continuous dependence on the initial data follows from the contraction principle.

By the comparison principle in Proposition \ref{prop-comparison}, if $u_0 \in \mathcal{H}_0$, then $\underline{u} = 0$ is 
a subsolution of $u$ so that $u(t,\cdot) \in \mathcal{H}_0$ for every $t \in [0,t_0]$. On the other hand, 
by Sobolev embedding of $H^1_0(\Gamma)$ into $L^{\infty}(\Gamma)$, 
if $u_0 \in \mathcal{H}_0$, then there is $C > 0$ such that $u_0(x) \leq C$ for all $x \in \Gamma$.
Let $\overline{u}$ be a solution of the initial-value problem 
\begin{align}
\label{ode-ivp}
    \left\{ \begin{array}{ll} 
    \overline{u}_t = \overline{u} (1 - \overline{u}), \quad & t > 0, \\
    \overline{u}(0) = C, \quad & \end{array} \right.
\end{align}
By the comparison principle in Proposition \ref{prop-comparison}, $\overline{u}$ is a supersolution of $u$ so that 
\begin{equation}
    \label{supremum-norm}
u(t,x) \leq \overline{u}(t), \quad \mbox{\rm for all  } x \in \Gamma \;\; \mbox{\rm and} \;\; t \in [0,t_0].
\end{equation}
It follows from (\ref{ode-ivp}) for any $C > 0$ that there exists a unique function $\overline{u}(t) \in C^1([0,\infty))$ 
such that $\lim\limits_{t \to +\infty} \overline{u}(t) = 1$. Since the graph $\Gamma$ is compact, 
the $L^2(\Gamma)$ norm of the unique local solution $u \in C^0([0,t_0],\mathcal{H}_0)$ remains bounded 
in the limit $t \to t_0$. Furthermore, since the Fisher--KPP equation is also a gradient model, we have 
\begin{equation}
    \label{Lyapunov}
\frac{d}{dt} H(u(t,\cdot)) = - \int_{\Gamma} \left( \frac{\delta H}{\delta u} \right)^2 dx \leq 0, \quad t \in (0,t_0),
\end{equation}
Due to monotonicity  of $H(u(t,\cdot))$ in (\ref{Lyapunov}) and the bound 
\begin{equation}
    \label{bound-H1}
\| \nabla u(t,\cdot) \|^2_{L^2(\Gamma)} \leq \| u(t,\cdot) \|^2_{L^2(\Gamma)} + 2 H(u(t,\cdot)),
\end{equation}
the $H^1_0(\Gamma)$ norm of the unique local solution $u \in C^0([0,t_0],\mathcal{H}_0)$ remains bounded 
in the limit $t \to t_0$. Hence, the unique local solution in $\mathcal{H}_0$ can be extended 
globally for every $t \in [0,\infty)$ as $u \in C^0([0,\infty),\mathcal{H}_0)$. 

To show the existence of the global attractor $u_* = \lim\limits_{t \to +\infty} u(t,\cdot)$ in $\mathcal{H}_0$,
we use the fact that the $H^1_0(\Gamma)$ norm of the global solution $u \in C^0([0,\infty),\mathcal{H}_0)$ remains bounded 
as $t \to +\infty$ due to the global bound on $\| u(t,\cdot) \|_{L^2(\Gamma)}$ following from (\ref{supremum-norm}) with 
$\lim\limits_{t \to +\infty} \overline{u}(t) = 1$, monotonicity of $H(u(t,\cdot))$ in $t$, and the bound (\ref{bound-H1}). 
Since $H(u)$ is bounded from below in $\mathcal{H}_0$ by Lemma \ref{lem-var-1} and the graph $\Gamma$ is compact, 
we must have $u(t,\cdot) \to u_*$ in $\mathcal{H}_0$ as $t \to +\infty$, where
$u_*$ is a steady state solution to the elliptic equation \eqref{EL}.
\end{proof}

Theorem \ref{th-well} establishes the existence of a unique global solution $u \in C^0([0,\infty),\mathcal{H}_0)$ 
to the initial-value problem \eqref{Fisher} and the existence of a global attractor $u_* \in \mathcal{H}_0$. 
If there is a unique steady state in the positive solutions of the elliptic equation (\ref{EL}), 
then there exists a unique global attractor of the initial-value problem (\ref{Fisher}) for all initial data in $\mathcal{H}_0 \backslash \{0\}$, 
which coincides with the ground state in Lemma \ref{lem-var-1} for the global minimum of $H(u)$. 
If $\lambda_0(\Gamma) \geq 1$, the unique global attractor is trivial by Lemma \ref{lem-var-2}. 
If $\lambda_0(\Gamma) < 1$, the ground state of $H(u)$ is strictly positive by Lemma \ref{lem-var-3}, but we 
need to prove that the global attractor is unique to coincide with the ground state of $H(u)$.
The following theorem gives the result on uniqueness of the global attractor in the Fisher--KPP equation (\ref{Fisher}). 
The proof expands the arguments in Corollary 2.2 and Propositions 3.1 and 3.2 of \cite{CC04} for compact connected metric graphs.

\begin{theorem} 
\label{th-unique} 
Let $\lambda_0(\Gamma)$ be the lowest eigenvalue of $-\Delta_{\Gamma}$ in $L^2(\Gamma)$. If $\lambda_0(\Gamma) \geq 1$,
then $u = 0$ is a unique global attractor to the initial-value problem \eqref{Fisher} for every $u_0 \in \mathcal{H}_0$. 
If $\lambda_0(\Gamma) < 1$, then the strictly positive ground state $u_* \in \mathcal{H}_0$ of $H(u)$ is a unique global attractor 
to the initial-value problem \eqref{Fisher} for every $u_0 \in \mathcal{H}_0 \backslash \{0\}$. 
\end{theorem}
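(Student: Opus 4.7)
The plan splits along the sharp criterion $\lambda_0(\Gamma) = 1$. The case $\lambda_0(\Gamma) \geq 1$ is immediate: Theorem \ref{th-well} provides a limit $u_* \in \mathcal{H}_0$ of $u(t,\cdot)$ in $H^1_0(\Gamma)$ that solves \eqref{EL}, and Lemma \ref{lem-var-2} identifies $u_* = 0$ as the unique such steady state, regardless of $u_0$.

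For $\lambda_0(\Gamma) < 1$, I would mimic the three-step argument of Corollary~2.2 and Propositions~3.1--3.2 of \cite{CC04}. First, I spectrally characterize nontrivial steady states: if $v \in \mathcal{H}_0$ solves \eqref{EL} nontrivially, Lemma \ref{lem-var-3} gives $v > 0$ off the boundary vertices, and rewriting $(-\Delta_\Gamma + v)v = v$ shows $v$ is a strictly positive eigenfunction of the Schr\"{o}dinger operator $-\Delta_\Gamma + v$ at eigenvalue $1$; the positivity argument of Remark \ref{rem-positive} forces $1$ to be the principal eigenvalue, so $\mu_0(v) = 1$ in the notation of Proposition \ref{lem:spectralcomp}. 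Second, for $u_0 \in \mathcal{H}_0 \setminus \{0\}$, I sandwich the orbit between two monotone parabolic flows. The time-independent $\underline{u}(x) := \delta \varphi_0(x)$, where $\varphi_0$ is the principal eigenfunction of $-\Delta_\Gamma$, is a subsolution of \eqref{EL} whenever $\delta \|\varphi_0\|_{L^\infty(\Gamma)} \leq 1 - \lambda_0(\Gamma)$, since
\begin{equation*}
-\Delta_\Gamma(\delta \varphi_0) = \delta \lambda_0(\Gamma) \varphi_0 \;\leq\; \delta \varphi_0 - (\delta \varphi_0)^2,
\end{equation*}
which is possible because $\lambda_0(\Gamma) < 1$; and the spatially constant $\overline{u}(t,x) := \overline{w}(t)$ with $\overline{w}' = \overline{w}(1 - \overline{w})$, $\overline{w}(0) = \max(\|u_0\|_{L^\infty(\Gamma)},1) + 1$ is a supersolution that decreases to $1$. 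A parabolic Hopf/strong-maximum-principle argument yields $u(t_1,\cdot) \geq \delta \varphi_0$ at some $t_1 > 0$, after which Proposition \ref{prop-comparison} sandwiches $u(\cdot + t_1, \cdot)$ between the monotone flows $\underline{u}(t,\cdot) \nearrow u_{\min}$ (starting from $\delta \varphi_0$) and $\overline{u}(t,\cdot) \searrow u_{\max}$ (starting from a suitable dominating supersolution in $\mathcal{H}_0$). Both limits are positive steady states with $u_{\min} \leq u_{\max}$.

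Third, by the first step, $\mu_0(u_{\min}) = \mu_0(u_{\max}) = 1$. If $u_{\min} \not\equiv u_{\max}$, then $u_{\min} \leq u_{\max}$ with strict inequality on an open subset of $\Gamma$, and Proposition \ref{lem:spectralcomp} gives $\mu_0(u_{\min}) < \mu_0(u_{\max})$, a contradiction. Hence $u_{\min} = u_{\max} =: u_*$, and sandwiching together with Theorem \ref{th-well} gives $u(t,\cdot) \to u_*$ in $H^1_0(\Gamma)$; this $u_*$ coincides with the nontrivial ground state of Lemma \ref{lem-var-1} since it is the unique positive steady state in $\mathcal{H}_0$.

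The main obstacle I expect is the parabolic Hopf-type lower barrier $u(t_1,\cdot) \geq \delta \varphi_0$ for arbitrary $u_0 \in \mathcal{H}_0 \setminus \{0\}$. On a smooth Euclidean domain this is classical, but on the metric graph the NK vertex conditions require a separate argument to propagate strict positivity from one edge to another. I would address this by using the positivity-preserving heat semigroup estimates on $\Gamma$ from \cite{Becker2021,Harrell2023} referenced in the introduction, applying an edge-wise Hopf lemma and then using continuity and the NK balance across interior vertices to transport the lower barrier to the full graph.
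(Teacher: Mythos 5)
Your proposal is correct and follows essentially the same route as the paper: both reduce to the spectral comparison of Proposition \ref{lem:spectralcomp} applied to two ordered positive steady states (each being a principal eigenfunction of its own Schr\"{o}dinger operator), both use $\varepsilon\varphi_0$ as the subsolution, and both invoke the strong maximum principle to handle initial data vanishing away from the boundary vertices. The only cosmetic differences are that the paper certifies nontriviality of the subsolution's limit via the energy inequality $H(\varepsilon\varphi_0)<0$ rather than monotonicity of the flow from an elliptic subsolution, and compares $\underline{u}_*$ directly with the attractor $u_*$ furnished by Theorem \ref{th-well} instead of squeezing between $u_{\min}$ and $u_{\max}$.
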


\begin{proof}
For $\lambda_0(\Gamma) \geq 1$, the result follows by Lemma \ref{lem-var-2} and Theorem \ref{th-well}.
For $\lambda_0(\Gamma) < 1$, we consider a solution $\underline{u}$ 
to the initial-value problem \eqref{Fisher} with initial condition $\underline{u}|_{t = 0} = \varepsilon \varphi_0$ for some $\varepsilon >0$, 
where $\varphi_0$ is a strictly positive 
eigenfunction of $-\Delta_{\Gamma}$ for the lowest eigenvalue $\lambda_0(\Gamma)$ in $L^2(\Gamma)$. 
Then, it follows from (\ref{energy}) that 
$$
H(\underline{u}|_{t = 0}) = \frac{1}{2} \left(\lambda_0(\Gamma) - 1\right) \varepsilon^2 \| \varphi_0 \|^2_{L^2(\Gamma)} 
+ \frac{1}{3} \varepsilon^3 \| \varphi_0 \|^3_{L^3(\Gamma)}
$$
and we can select a sufficiently small value of $\varepsilon > 0$ such that $H(\underline{u}|_{t = 0}) < 0$. 
It follows from the monotonicity (\ref{Lyapunov}) that $H(\underline{u}(t,\cdot)) \leq H(\underline{u}|_{t = 0}) < 0$ 
so that the attractor for $\underline{u}$, denoted as $\underline{u}_*$, is nontrivial, which exists by Theorem \ref{th-well}.
By the preservation of positivity, $\underline{u}_* \in \mathcal{H}_0$ and since $\underline{u}_*$ is a steady 
state of the elliptic problem (\ref{EL}), we have by Lemma \ref{lem-var-3} that $\underline{u}_*(x) > 0$ for all $x \in \Gamma$ except for the boundary vertices.

Let $u \in C^0([0,\infty),\mathcal{H}_0)$ be a solution to the initial-value problem \eqref{Fisher} 
and assume first that $u_0(x) > 0$ for every $x \in \Gamma$ except for the boundary vertices. 
Then, we can always choose $\varepsilon > 0$ so that $\varepsilon \varphi_0(x) \leq u_0(x)$ 
for all $x \in \Gamma$. By the comparison principle in Proposition \ref{prop-comparison}, $\underline{u}$ is 
a subsolution to $u$ so that $\underline{u}(t,x) \leq u(t,x)$ for all $(t,x) \in [0,\infty) \times \Gamma$. 
The attractor $u_* = \lim\limits_{t \to +\infty} u(t,\cdot)$ exists by Theorem \ref{th-well}
and it follows from the comparison principle that $\underline{u}_*(x) \leq u_*(x)$ for all $x \in \Gamma$. 
Now, we consider two Schr\"{o}dinger equations similar to (\ref{Schr-eq}):
$$
-\Delta_{\Gamma} \psi + ( \underline{u}_* - 1) \psi = 0, \quad \psi \in D(\Delta_{\Gamma}) 
$$
and 
$$
-\Delta_{\Gamma} \psi + (u_* - 1) \psi = 0, \quad \psi \in D(\Delta_{\Gamma}), 
$$
which are satisfied by $\psi = \underline{u}_*$ and $\psi = u_*$ respectively. 
Hence, $\mu_0(\underline{u}_*-1) = \mu_0(u_*-1) = 0$ for the lowest eigenvalue of the Schrodinger 
equation (\ref{Schr-eq}) associated with the potentials $V_1 = \overline{u}_* - 1$ and $V_2 = u_* - 1$. 
If $\underline{u}_*(x) < u_*(x)$ for some $x$ in an open set in $\Gamma$, 
then we get a contradiction with $\mu_0(\underline{u}_*-1) <\mu_0(u_*-1)$ following from Proposition \ref{lem:spectralcomp}.
This proves that $\underline{u}_*(x) = u_*(x)$ for all $x \in \Gamma$, hence the global attractor for 
every global solution $u \in C^0([0,\infty),\mathcal{H}_0)$ is unique as long as $u_0(x) > 0$ for every $x \in \Gamma$ except for the boundary vertices. 
 
Let $u \in C^0([0,\infty),\mathcal{H}_0)$ be a solution to the initial-value problem \eqref{Fisher} and assume now 
that $u_0 \in \mathcal{H}_0$ may vanish at some points of $\Gamma$ other than the boundary vertices but $u_0 \neq 0$ identically. 
Then we can advance the solution $u(t,\cdot)$ forward in time for some $t_0 > 0$, and by the strong maximum principle 
we have that the solution $u(t,\cdot)$ is positive everywhere except the boundary vertices. Then we can choose again 
$\ve > 0$ such that $\varepsilon \varphi_0(x) \leq u(t_0,x)$ for all $x \in \Gamma$ 
and use $\underline{u}(t-t_0,\cdot)$ with $\underline{u}(0,\cdot) = \varepsilon \varphi_0$ 
as a subsolution to $u(t-t_0,\cdot)$ for $t \geq t_0$. As a result, we again 
obtain a unique global attractor for every global solution $u \in C^0([0,\infty),\mathcal{H}_0)$ with $u_0 \in \mathcal{H}_0$ as long 
as $u_0 \neq 0$ identically.
\end{proof}

\begin{remark}
For $\lambda_0(\Gamma) > 1$, we can show that $u = 0$ is a unique global attractor for every $u_0 \in \mathcal{H}_0$ 
by using the comparison principle in Proposition \ref{prop-comparison}. To do so, we consider the function 
$\overline{u}(x,t) = C e^{(1 - \lambda_0(\Gamma)) t} \varphi_0(x)$, where $\varphi_0$ is a strictly positive 
eigenfunction of $-\Delta_{\Gamma}$ for the lowest eigenvalue $\lambda_0(\Gamma)$ in $L^2(\Gamma)$. 
We observe that 
\begin{equation}\label{eq:super}
\overline{u}_t = \left(1 - \lambda_0(\Gamma) \right) \overline{u} \geq \Delta_{\Gamma} \overline{u} + \overline{u} (1-\overline{u}).
\end{equation}
If $u \in C^0([0,\infty),\mathcal{H}_0)$ is any solution to the initial-value problem \eqref{Fisher}, 
then we can always choose a sufficient large value of $C$ such that $u_0(x) \leq C \varphi_0(x)$ for all $x \in \Gamma$. 
By the comparison principle in Proposition \ref{prop-comparison}, $\overline{u}$ is a supersolution 
to $u$ so that $u(t,x) \leq \overline{u}(t,x)$ for all $(t,x) \in [0,\infty) \times \Gamma$ and we get 
$$
\lim_{t \to \infty} u(t,x) \leq \lim_{t \to \infty} \overline{u}(t,x) = 0, \quad \mbox{\rm for every} \;\; x \in \Gamma.
$$
However, this argument is not sufficient for $\lambda_0(\Gamma) = 1$ since in this case, $\overline{u}$ is $t$-independent 
and only shows that $\| u(t,\cdot) \|_{L^{\infty}(\Gamma)}$ is globally bounded as $t \to +\infty$.
\end{remark}

Theorems \ref{th-well} and \ref{th-unique} reduce the existence of a nontrivial global attractor to determining whether or not the lowest eigenvalue $\lambda_0(\Gamma)$ is greater or less than $1$. This precise condition can be satisfied depending on the edge lengths of the graph $\Gamma$, 
as the following two corollaries demonstrate. 

\begin{corollary}
	\label{cor-var-1}
	There is $L_0 > 0$ such that if $\max_j L_j < L_0$, then $u = 0$ is a unique global attractor for every $u_0 \in \mathcal{H}_0$. 
\end{corollary}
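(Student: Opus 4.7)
The plan is to reduce the claim to the spectral condition $\lambda_0(\Gamma) \geq 1$ and then invoke Theorem~\ref{th-unique}. Concretely, once we produce an $L_0>0$ for which $\max_j L_j < L_0$ forces $\lambda_0(\Gamma) \geq 1$, Theorem~\ref{th-unique} identifies $u=0$ as the unique global attractor for every $u_0 \in \mathcal{H}_0$. So the whole task is a spectral estimate on $\lambda_0(\Gamma)$ in terms of the edge lengths, which I will obtain by combining the monotonicity result of Lemma~\ref{lem-linear-1} with the scaling identity of Lemma~\ref{lem-linear-2}.

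First I would apply Lemma~\ref{lem-linear-1} iteratively, one edge at a time. The lowest eigenvalue $\lambda_0(\Gamma)$ is simple with a strictly positive eigenfunction (Remark~\ref{rem-positive}), so the hypothesis of Lemma~\ref{lem-linear-1} is satisfied, and $\lambda_0$ is strictly decreasing in each edge length. Given a candidate threshold $L_0>0$, suppose that $\max_j L_j < L_0$. Letting $\Gamma(L_0)$ denote the graph with the same combinatorial structure as $\Gamma$ but with every edge of length $L_0$, successively enlarging each $L_j$ up to $L_0$ only decreases $\lambda_0$, hence
\[
\lambda_0(\Gamma) \;\geq\; \lambda_0(\Gamma(L_0)).
\]
Next, Lemma~\ref{lem-linear-2} applied with the uniform scaling $L_j = L_0 \cdot 1$ gives
\[
\lambda_0(\Gamma(L_0)) \;=\; L_0^{-2}\, \mu_0(\tilde\Gamma),
\]
where $\mu_0(\tilde\Gamma) > 0$ is the lowest eigenvalue of $-\Delta_{\tilde\Gamma}$ on the reference graph $\tilde\Gamma$ with all edges of length $1$ (positivity of $\mu_0(\tilde\Gamma)$ follows from the Rayleigh quotient \eqref{Rayleigh} and the presence of a pendant with a Dirichlet condition). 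Choosing
\[
L_0 \;:=\; \sqrt{\mu_0(\tilde\Gamma)}
\]
then yields $\lambda_0(\Gamma(L_0)) = 1$, and the chain of inequalities gives $\lambda_0(\Gamma) \geq 1$ whenever $\max_j L_j < L_0$.

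Finally I would conclude by invoking Theorem~\ref{th-unique}: the condition $\lambda_0(\Gamma) \geq 1$ guarantees that the trivial steady state $u=0$ is the unique global attractor of \eqref{Fisher} for every initial datum $u_0 \in \mathcal{H}_0$. There is no real obstacle here—the proof is essentially a bookkeeping argument stringing together the monotonicity of Lemma~\ref{lem-linear-1}, the scaling of Lemma~\ref{lem-linear-2}, and the asymptotic stability of Theorem~\ref{th-unique}. The only subtle point is making sure that Lemma~\ref{lem-linear-1} applies throughout the iterative enlargement, which is guaranteed by the simplicity of $\lambda_0$ documented in Remark~\ref{rem-positive}.
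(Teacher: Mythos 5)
Your proof is correct and follows the same basic strategy as the paper: reduce the claim to the spectral condition $\lambda_0(\Gamma)\geq 1$ and invoke Theorem~\ref{th-unique}. The difference is in how the spectral estimate is obtained. The paper's proof is a one-liner: it cites only Lemma~\ref{lem-linear-2} to say $\lambda_0(\Gamma)\to+\infty$ as $L\to 0$ along the proportional scaling $L_j = L\ell_j$, and then asserts the existence of $L_0$. Strictly speaking that argument only controls graphs whose edge lengths shrink along a fixed ray $\{\ell_j\}$, whereas the hypothesis $\max_j L_j < L_0$ allows arbitrary (non-proportional) combinations of short edges; making the paper's assertion uniform over all such combinations requires exactly the monotonicity input you supply. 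Your version closes that gap cleanly: you enlarge each edge to the common length $L_0$ using the strict monotonic decrease of $\lambda_0$ from Lemma~\ref{lem-linear-1} and Remark~\ref{rem-positive} (the lowest eigenvalue stays simple with positive eigenfunction throughout, so the iterative application is legitimate), then apply the scaling identity to the uniform graph and obtain the explicit threshold $L_0 = \sqrt{\mu_0(\tilde\Gamma)}$, with $\mu_0(\tilde\Gamma)>0$ correctly justified by the Dirichlet pendant. So your argument buys both rigor on the non-uniform case and an explicit value of $L_0$, at the cost of one extra lemma citation; there is no gap.
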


\begin{proof}
By Lemma \ref{lem-linear-2}, $\lambda_0(\Gamma) \to +\infty$ as $L \to 0$ after rescaling $L_j = L \ell_j$ for $L$-independent $\{ \ell_j \}$. 
Then, there is $L_0 > 0$ such that $\lambda_0(\Gamma) > 1$ for $\max_j L_j < L_0$ and Theorem \ref{th-unique} applies.
\end{proof}

\begin{corollary}
	\label{cor-var-2}
	For every $j$-th edge of length $L_j$, there is $L_j^{(0)} \in [0,\infty)$ such that for every $L_j > L_j^{(0)}$ with this $j$, 
    the strictly positive ground state of $H(u)$ is a unique global attractor for every $u \in \mathcal{H}_0 \backslash \{0\}$. 
\end{corollary}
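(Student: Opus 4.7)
The plan is to reduce the claim to Theorem \ref{th-unique} by showing that the lowest eigenvalue $\lambda_0(\Gamma)$ can be driven strictly below $1$ by enlarging any single edge. Fix the edge $e_j$ parametrized as $[0,L_j]$ and hold every other edge length constant, so that $\lambda_0(\Gamma)$ becomes a function of $L_j$ alone. By Lemma \ref{lem-linear-1} together with Remark \ref{rem-positive}, this function is $C^1$ and strictly decreasing on $(0,\infty)$, since for the principal eigenfunction $|\psi'(L_j)|^2 + \lambda_0(\Gamma)|\psi(L_j)|^2 > 0$ along the edge $e_j$.

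Next I would exhibit an explicit upper bound via the Rayleigh quotient (\ref{Rayleigh}). Define $\psi_{L_j} \in H^1_0(\Gamma)$ by $\psi_{L_j}(x) = \sin(\pi x / L_j)$ for $x \in e_j$ and $\psi_{L_j} \equiv 0$ on every other edge of $\Gamma$. Because $\sin(0) = \sin(\pi) = 0$, the test function is continuous at both endpoints of $e_j$ whether those endpoints are interior vertices, boundary vertices, or identified (a loop), and it satisfies the Dirichlet condition at every boundary vertex; hence $\psi_{L_j} \in H^1_0(\Gamma)$. A direct calculation gives
\[
\int_{\Gamma} |\nabla_{\Gamma} \psi_{L_j}|^2 dx = \frac{\pi^2}{2 L_j}, \qquad \int_{\Gamma} \psi_{L_j}^2 dx = \frac{L_j}{2},
\]
so (\ref{Rayleigh}) yields $\lambda_0(\Gamma) \leq \pi^2/L_j^2$, which tends to $0$ as $L_j \to \infty$.

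Combining strict monotonicity and continuity with this upper bound, I would set
\[
L_j^{(0)} := \inf\{ L > 0 : \lambda_0(\Gamma) |_{L_j = L} < 1 \} \in [0,\infty),
\]
which is finite because $\pi^2/L^2 < 1$ as soon as $L > \pi$. Strict monotonicity guarantees $\lambda_0(\Gamma) < 1$ for every $L_j > L_j^{(0)}$. Lemmas \ref{lem-var-1} and \ref{lem-var-3} then furnish a strictly positive ground state $u_* \in \mathcal{H}_0$ of $H(u)$, and Theorem \ref{th-unique} identifies $u_*$ as the unique global attractor of (\ref{Fisher}) for every $u_0 \in \mathcal{H}_0 \setminus \{0\}$.

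The only point requiring any care is verifying that the test function is admissible regardless of the topological type of $e_j$; choosing a sine that vanishes at both endpoints handles pendants, interior edges and loops uniformly, and beyond that the argument is a direct assembly of Lemma \ref{lem-linear-1}, the Rayleigh characterization, and Theorem \ref{th-unique}.
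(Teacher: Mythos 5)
Your proof is correct, and it follows the same skeleton as the paper's argument (strict monotonicity of $\lambda_0(\Gamma)$ in $L_j$ from Lemma \ref{lem-linear-1} and Remark \ref{rem-positive}, then the decay of $\lambda_0(\Gamma)$ as $L_j \to \infty$, then Theorem \ref{th-unique}), but it differs in how the key limit $\lambda_0(\Gamma) \to 0$ is justified. The paper argues somewhat indirectly that $\lambda_0(\Gamma)$ cannot converge to a positive level because the graph with $L_j = \infty$ has continuous spectrum filling $[0,\infty)$ — an appeal to spectral convergence that is left informal. You instead produce the explicit test function $\sin(\pi x/L_j)$ supported on $e_j$ and extended by zero, verify its admissibility in $H^1_0(\Gamma)$ for every topological type of edge (pendant, interior edge, loop), and read off the quantitative bound $\lambda_0(\Gamma) \leq \pi^2/L_j^2$ from the Rayleigh quotient \eqref{Rayleigh}. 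This is more elementary and self-contained, and it gives an explicit upper bound $L_j^{(0)} \leq \pi$ (indeed $\lambda_0(\Gamma) < 1$ as soon as $L_j > \pi$), which the paper's version does not provide. The trade-off is negligible: your variational step is standard, and the rest of the assembly (Lemmas \ref{lem-var-1} and \ref{lem-var-3} for existence and strict positivity of the ground state, Theorem \ref{th-unique} for uniqueness of the attractor) matches the paper exactly.
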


\begin{proof}
By Lemma \ref{lem-linear-1} and Remark \ref{rem-positive}, $\lambda_0(\Gamma)$ is strictly monotonically decreasing with respect to $L_j$. Moreover, $\lambda_0(\Gamma) \to 0$ as $L_j \to \infty$ because it is bounded from below by $0$ and it cannot converge to a positive level since the spectrum of $-\Delta_{\Gamma}$ with $L_j = \infty$ includes a continuous spectrum at $[0,\infty)$. Therefore, either $\lambda_0(\Gamma) \in (0,1]$ in the limit $L_j \to 0$ in which case $L_j^{(0)} = 0$ or $\lambda_0(\Gamma) > 1$ in the limit $L_j \to 0$ in which case $L_j^{(0)} > 0$ and $\lambda_0(\Gamma) \in (0,1)$ for $L_j > L_j^{(0)}$. In either case, Theorem \ref{th-unique} applies. 
\end{proof}

\section{Period functions for the positive ground state}
\label{sec-4}

The purpose of the period functions is to characterize the nontrivial (positive) ground state 
more precisely. The period functions for cubic differential equations on metric graphs were introduced in our previous work \cite{KMPX21}. Here we extend the method to the quadratic differential equations, where many details are different, including the asymptotic expansion of the period function 
in the limit of long graphs. 

Let us consider the integral curves of the second-order equation 
\begin{equation}
\label{ode}
u''(x) + u(x) - u(x)^2 = 0, \quad u(x) : \mathbb{R} \to \mathbb{R}.
\end{equation}
If $L_{\rm min} = \min_j L_j$ is large, the ground state $u_*$ is locally close to $1$ on $\Gamma_0$, 
where $\Gamma_0$ is the part of the metric graph $\Gamma$ without the pendants with the boundary vertices. 
Therefore, it makes sense to rewrite 
the second-order equation (\ref{ode}) for $u = 1 - \tilde{u}$ with 
\begin{equation}
\label{ode-tilde}
\tilde{u}''(x) - \tilde{u}(x) + \tilde{u}(x)^2 = 0, \quad \tilde{u}(x) : \mathbb{R} \to \mathbb{R}.
\end{equation}
In what follows, we introduce two period functions for the second-order equation (\ref{ode-tilde}). 

\subsection{The period function to the point $(p,q)$}

Let $(p,q) \in (0,1) \times (-\infty,0)$ be a point on the phase plane of the second-order equation (\ref{ode-tilde}) 
which admits a saddle point at $(0,0)$ and a center point at $(1,0)$. 
We are considering an integral curve which corresponds to the 
constant value of the first-order invariant
\begin{equation}
\label{invariant}
E(\tilde{u},\tilde{v}) = \tilde{v}^2 - \tilde{u}^2 + \frac{2}{3} \tilde{u}^3, \quad \tilde{v}:= \frac{d \tilde{u}}{dx},
\end{equation}
starting with the initial point $(\tilde{u},\tilde{v}) = (1,\tilde{q})$ and ending at a point $(\tilde{u},\tilde{v}) = (p,q)$, 
where $\tilde{q} = \tilde{q}(p,q) < 0$ is found from 
\begin{equation}
\label{energy-level}
E(p,q) = q^2 - p^2 + \frac{2}{3} p^3 = \tilde{q}^2 - \frac{1}{3} \quad \Rightarrow \quad 
\tilde{q}(p,q) = -\sqrt{q^2 + \frac{1}{3} - p^2 + \frac{2}{3} p^3}.
\end{equation}
The period function for the part of the integral curve connecting the two points is given by 
\begin{equation}
\label{period-function}
T(p,q) = \int_{p}^1 \frac{du}{v}, \quad v := \sqrt{E(p,q) + A(u)}, \quad A(u) := u^2 - \frac{2}{3} u^3.
\end{equation}
Figure \ref{fig:period-T-function} shows the part of the integral curve between the points $(1,\tilde{q})$ and $(p,q)$. 

\begin{figure}[htb!]
    \centering
    \includegraphics[width=8cm,height=5.5cm]{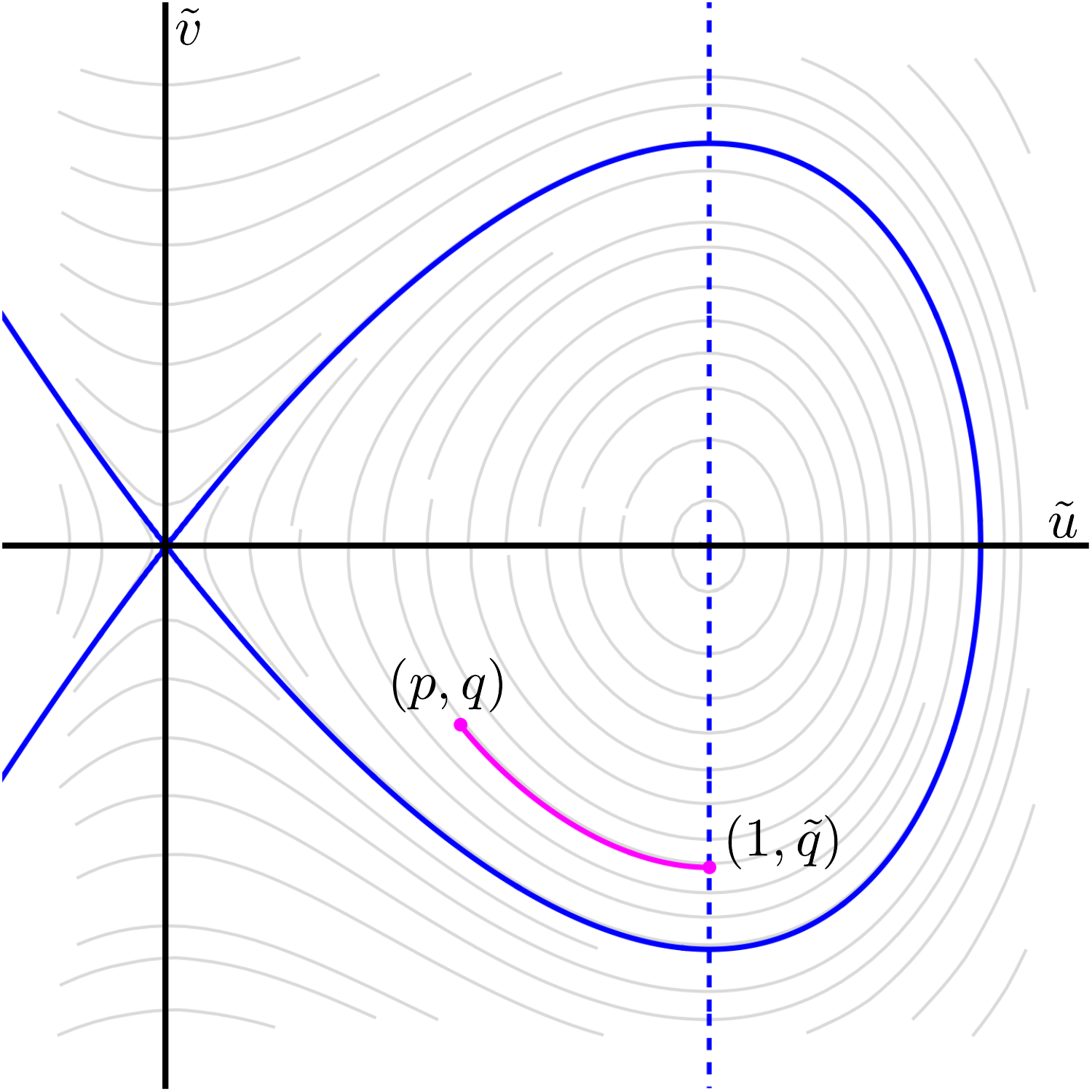}
    \caption{An example of the period function $T(p,q)$ given by (\ref{period-function}) in terms of the phase portrait 
    obtained from the level curves of $E(\tilde{u},\tilde{v})$ given by (\ref{invariant}). 
    The homoclinic orbit is in blue, the integral curve from $(1,\tilde{q})$ to $(p,q)$ is in magenta, 
    and the other orbits are in gray.}
    \label{fig:period-T-function}
\end{figure}

The following lemma gives the monotonicity of $T(p,q)$ with respect to two independent parameters $(p,q)$.

\begin{lemma}
	\label{lem-period-1}
	For every $(p,q) \in (0,1) \times (-\infty,0)$, we have 
	\begin{equation}
	\label{monotonicity}
	\frac{\partial T}{\partial p} < 0, \qquad 
	\frac{\partial T}{\partial q} > 0.
	\end{equation} 
\end{lemma}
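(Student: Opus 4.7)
The plan is to differentiate $T(p,q) = \int_p^1 v(u)^{-1}\,du$ with $v = \sqrt{E(p,q) + A(u)}$ directly under the integral sign, treating the two parameters separately.

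For $\partial T/\partial q > 0$, the lower limit $p$ is independent of $q$, and only the integrand depends on $q$ through $\partial_q E = 2q$; the chain rule immediately gives
\begin{equation*}
\frac{\partial T}{\partial q} = -q \int_p^1 \frac{du}{v^3},
\end{equation*}
which is strictly positive because $q < 0$. This establishes the second inequality.

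For $\partial T/\partial p < 0$, Leibniz's rule accounts for both the lower limit at $u = p$ (where $v(p) = |q|$, contributing $-1/|q|$) and the integrand through $\partial_p E = -A'(p) = -2p(1-p)$, yielding
\begin{equation*}
\frac{\partial T}{\partial p} = -\frac{1}{|q|} + p(1-p)\int_p^1 \frac{du}{v^3}.
\end{equation*}
The claim thus reduces to the integral inequality $p(1-p)\int_p^1 v^{-3}\,du < 1/|q|$. The essential tool is the explicit antiderivative $\frac{d}{du}[-v^{-1}] = A'(u)/(2v^3)$, yielding the exact identity
\begin{equation*}
\int_p^1 \frac{A'(u)}{v^3}\,du = \frac{2}{|q|} - \frac{2}{|\tilde q|}.
\end{equation*}
Combined with the algebraic identity $p(1-p) - u(1-u) = -(u-p)(1-u-p)$, the desired inequality rearranges to
\begin{equation*}
\int_p^1 \frac{(u-p)(1-u-p)}{v^3}\,du > -\frac{1}{|\tilde q|}.
\end{equation*}

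The main obstacle is to verify this lower bound rigorously, because the integrand changes sign at $u = 1-p$ when $p < 1/2$, and in the regime of small $|q|$ the integrand $v^{-3}$ is concentrated near $u = p$, where $v \approx |q|$. My approach is to switch to the variable $\xi = 1-u$, exploiting the symmetry $A(\xi) + A(1-\xi) = 1/3$ of the cubic $A(u) = u^2 - \tfrac{2}{3}u^3$, to rewrite $T = \int_0^{r} G(\xi)^{-1/2}\,d\xi$ with $r = 1-p$ and $G(\xi) = q^2 + A(r) - A(\xi)$. Here $G$ is smooth, monotonically decreasing from $\tilde q^2$ to $q^2$, with concavity governed by $G''(\xi) = 4\xi - 2$. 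In this form the desired inequality becomes $\int_0^r [A'(r) - A'(\xi)]G^{-3/2}\,d\xi < 2/|\tilde q|$. I would attempt it via a two-region argument: on a subinterval near $\xi = r$, use the linearization $G(\xi) \approx q^2 + A'(r)(r-\xi)$ to evaluate the local contribution explicitly; on the complement, use a chord-type lower bound for $G$ (leveraging concavity where available, $\xi \le 1/2$) so that the integrand is bounded by an explicitly integrable function. Matching the two estimates should yield the strict inequality uniformly in $(p,q)$ in the stated range, completing the proof.
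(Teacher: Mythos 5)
Your derivation of the two derivative formulas agrees with the paper's starting point, and the positivity of $\partial T/\partial q$ follows at once. Your reduction of $\partial T/\partial p<0$ to the integral inequality
\begin{equation*}
\int_p^1 \frac{(u-p)(1-u-p)}{v^3}\,du \;>\; -\frac{1}{|\tilde q|}
\end{equation*}
via the exact antiderivative $\tfrac{d}{du}(-v^{-1})=A'(u)/(2v^3)$ is also correct, and it usefully extracts the $1/|q|$ cancellation so that the remaining integral stays finite as $q\to 0^-$. But this inequality is precisely the nontrivial content of the lemma (the paper itself notes that the sign of $\partial T/\partial p$ is inconclusive from the raw formula), and you have not proved it. The two-region scheme --- linearizing $G$ near $\xi=r$, chord bounds on the complement, then ``matching'' --- is a plan, not an argument: no splitting point is chosen, no constants are exhibited, and the concluding ``should yield the strict inequality uniformly'' carries the entire burden. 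The bound must hold uniformly over the noncompact region $(p,q)\in(0,1)\times(-\infty,0)$, including the degenerate limits $q\to 0^-$ (where $v$ collapses to $|q|$ at $u=p$), $p\to\tfrac12$ (where the sign change of the integrand enters or leaves the interval), and $p\to 0^+,1^-$; a sign statement that is sharp in these limits is exactly the kind of claim that soft two-sided estimates with unquantified matching tend not to close. As it stands, the first inequality of the lemma is unproven.

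For comparison, the paper sidesteps the estimate entirely with a second exact identity: multiplying $T$ by $E(p,q)+A(1)=\tilde q^2>0$ and integrating by parts against $d\bigl[(A(u)-A(1))v/A'(u)\bigr]$ converts the $v^{-3}$ integrals into the manifestly positive integral $\int_p^1\frac{1-u^2}{3u^2v}\,du$, giving
\begin{equation*}
\tilde q^2\,\frac{\partial T}{\partial p}=-p(1-p)\int_p^1\frac{1-u^2}{3u^2 v}\,du+q,
\end{equation*}
in which both terms on the right are negative. If you want to keep your route, the natural fix is to perform an analogous \emph{exact} integration by parts on your reformulated integral rather than estimate it; in its present form the proof has a genuine gap at its decisive step.
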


\begin{proof}
	Since $p > 0$ and $q < 0$ for the part of the integral curve, we can differentiate $T(p,q)$ 
    by using (\ref{period-function}) directly and obtain 
	\begin{align}
	\label{der-p}
	\frac{\partial T}{\partial p} &= p(1-p) \int_p^1 \frac{du}{v^3} + \frac{1}{q}, \\
	\label{der-q}
	\frac{\partial T}{\partial q} &= -q \int_p^1 \frac{du}{v^3},
	\end{align}
where we have used 
\begin{equation}
	\label{derivative-E}
\frac{\partial E}{\partial p} = -2p(1-p), \qquad 
\frac{\partial E}{\partial q} = 2q,
\end{equation}
and $v |_{u = p} = -q$. Since $(p,q) \in (0,1) \times (-\infty,0)$, it follows from (\ref{der-q}) that $\frac{\partial T}{\partial q} > 0$, whereas the sign of $\frac{\partial T}{\partial p}$ is not conclusive from (\ref{der-p}). In order to complete the proof of (\ref{monotonicity}), we renormalize the weak singularity of the integrand in (\ref{period-function}) by using 
	\begin{align*}
	[E(p,q) + A(1)] T(p,q) &= \int_p^1 \frac{v^2 - [A(u)-A(1)]}{v} du \\
	&= \int_{p}^1 v du - \int_p^1 \frac{A(u) - A(1)}{v} du, 
	\end{align*}
	where $E(p,q) + A(1) > 0$ for every $E(p,q) \in (-\frac{1}{3},0)$ since $A(1) = \frac{1}{3}$. Along the integral curve $E(p,q) = E(u,v)$, we have by the chain rule
	\begin{align*}
	d \left[ \frac{A(u) - A(1)}{A'(u)} v \right] = \left(1 - \frac{A''(u) [A(u) - A(1)]}{[A'(u)]^2} \right) v du + \frac{A(u) - A(1)}{2 v} du,
	\end{align*}
	which yields 
	\begin{align*}
[E(p,q) + A(1)] T(p,q) = \int_{p}^1 \left(3 - \frac{2 A''(u) [A(u) - A(1)]}{[A'(u)]^2} \right) v du - 2 \frac{A(u) - A(1)}{A'(u)} v \biggr|_{u = p}^{u = 1}. 
\end{align*}
Since 
$$
A(u) - A(1) = - \frac{1}{3} (1-u)^2 (1+2u), \quad A'(u) = 2u (1-u),
$$	
the last term is zero at $u = 1$. At $u = p$, we get $v = -q$ which yields
	\begin{align*}
[E(p,q) + A(1)] T(p,q) = \int_{p}^1 \left(3 - \frac{2 A''(u) [A(u) - A(1)]}{[A'(u)]^2} \right) v du + \frac{(1-p)(1+2p)}{3p} q.
\end{align*}
The right-hand side is now continuously differentiable in $(p,q)$ from which we obtain 
	\begin{align}
[E(p,q) + A(1)] \frac{\partial T}{\partial p} &= -p(1-p) \int_{p}^1 \left(1 - \frac{2 A''(u) [A(u) - A(1)]}{[A'(u)]^2} \right) \frac{du}{v}  \notag\\
& \qquad 
+ q \left(3 - \frac{2 A''(p) [A(p) - A(1)]}{[A'(p)]^2} \right)
+ q \frac{d}{dp}  \frac{(1-p)(1+2p)}{3p} \notag \\
&= -p(1-p) \int_{p}^1 \frac{1-u^2}{3u^2 v} du + q 
\label{dTdp}
\end{align}
and 
	\begin{align}
[E(p,q) + A(1)] \frac{\partial T}{\partial q} &= q \int_{p}^1 \left(1 - \frac{2 A''(u) [A(u) - A(1)]}{[A'(u)]^2} \right) \frac{du}{v} + \frac{(1-p)(1+2p)}{3p} \notag \\
&= q \int_{p}^1 \frac{1-u^2}{3u^2 v} du + \frac{(1-p)(1+2p)}{3p},
\label{dTdq}
\end{align}
where we have used (\ref{derivative-E}) and  the definition of $T$ and $A(p)$ in (\ref{period-function}). 
Since $E(p,q) + A(1) > 0$ for $(p,q) \in (0,1) \times (-\infty,0)$, we have $\frac{\partial T}{\partial p} < 0$ from (\ref{dTdp}).
\end{proof}

The asymptotic behavior of $T(p,q)$ near $(0,0)$ is obtained in the following lemma. It is used in the proof of Theorem \ref{theorem-localized-state} below. 

\begin{lemma}
The period function $T(p,q)$ in Lemma \ref{lem-period-1} satisfies the limit
	\begin{equation}
	\label{limits-T}
	\displaystyle
\lim_{q = Qp, \; p \to 0^+} p \frac{\partial T}{\partial p} = \frac{-1}{1-Q} = - \lim_{q = Qp, \; p \to 0^+}  p \frac{\partial T}{\partial q},
	\end{equation}
	where $Q \in (-\infty,0)$ is fixed.  Moreover, $T(p,q)$ satisfies the asymptotic expansion
	\begin{equation}
\label{asymptotics-T}
T(p,q) = - \ln\left(\frac{p-q}{12}\right) - x_0 + \mathcal{O}(p), \quad 
\mbox{\rm as} \;\; p \to 0^+, \;\; q = Qp,
\end{equation} 
where $x_0 = 2 \arccosh\left(\frac{\sqrt{3}}{\sqrt{2}}\right)$.
\label{lem-period-1-asymptotics}
\end{lemma}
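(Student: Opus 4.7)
The plan is to establish \eqref{asymptotics-T} by matched asymptotic expansion near the saddle at $(0,0)$ and then deduce the limits \eqref{limits-T} by applying the same decomposition to the weakly singular representations \eqref{dTdp} and \eqref{dTdq}. I fix an intermediate scale $\delta$ with $p \ll \delta \ll 1$ and split $T(p,q) = \int_p^{\delta} \frac{du}{v} + \int_\delta^1 \frac{du}{v}$. In the \emph{outer} region $A(u) = u^2(1 - \tfrac{2}{3}u)$ stays bounded below by a multiple of $\delta^2$, so expanding $v = \sqrt{A(u) + E(p,q)}$ in the small quantity $E(p,q) = O(p^2)$ and using the substitution $w = \sqrt{1 - 2u/3}$ reduces the leading term to $\int \frac{2\,dw}{1 - w^2}$. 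Evaluating at $w = 1/\sqrt{3}$ (from $u = 1$) produces the constant $x_0 = \ln\frac{\sqrt{3}+1}{\sqrt{3}-1} = 2\arccosh(\sqrt{3/2})$, and expanding at $w = \sqrt{1 - 2\delta/3}$ for small $\delta$ yields
\[
\int_\delta^1 \frac{du}{v} = \ln\frac{6}{\delta} - \frac{\delta}{3} - x_0 + O\!\left(p^2/\delta^2 + \delta^2\right).
\]

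For the \emph{inner} region I substitute $u = ps$, which turns the integral into $\int_1^{\delta/p} ds / \sqrt{s^2 + Q^2 - 1 + \tfrac{2p}{3}(1 - s^3)}$. Expanding the square root in the small correction and using $\ln\!\bigl(s + \sqrt{s^2 + Q^2 - 1}\bigr)$ as the antiderivative of the leading integrand, together with the identity $1 + |Q| = (p - q)/p$ valid for $q = Qp < 0$, gives
\[
\int_p^\delta \frac{du}{v} = \ln(2\delta) - \ln(p - q) + \frac{\delta}{3} + O(p),
\]
where the explicit $+\delta/3$ comes from the tail of the first-order correction integrand, which approaches $-1$ as $s \to \infty$. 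Summing both pieces, the $\ln(1/\delta)$ terms combine to $\ln 12$, the $\pm\delta/3$ residues cancel exactly, and choosing $\delta = p^{2/3}$ renders all remainders $O(p)$, giving \eqref{asymptotics-T}.

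To obtain the derivative limits I apply the same inner/outer strategy to the integral $J(p,q) := \int_p^1 \frac{1 - u^2}{3u^2\, v}\, du$ that appears in \eqref{dTdp}--\eqref{dTdq}. Rescaling $u = ps$ shows the inner piece dominates and is
\[
J(p,q) = \frac{1}{3p^2}\int_1^\infty \frac{ds}{s^2\sqrt{s^2 + Q^2 - 1}} + O(1/p),
\]
and the remaining improper integral simplifies under $s = 1/t$ to $\int_0^1 \frac{t\, dt}{\sqrt{1 + (Q^2 - 1) t^2}} = \frac{|Q| - 1}{Q^2 - 1} = \frac{1}{1 - Q}$. Inserting this into \eqref{dTdp}, dividing by $E(p,q) + A(1) = \tfrac{1}{3} + O(p^2)$, and adding the $q$ term gives $p \frac{\partial T}{\partial p} \to -\frac{1}{1 - Q}$; the analogous substitution into \eqref{dTdq} together with the algebraic identity $\frac{Q}{1-Q} + 1 = \frac{1}{1-Q}$ (which collapses the two separate $1/p$ contributions into a single one) yields $p \frac{\partial T}{\partial q} \to \frac{1}{1 - Q}$.

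The main obstacle is the uniform control of the inner expansion: the perturbation $\tfrac{2p}{3}(1 - s^3)$ inside the square root is itself of size $\delta$ at the matching edge $s \sim \delta/p$, so the leading-order truncation is not justified without tracking the sub-leading $\delta/3$ piece explicitly. The exact cancellation of the $\pm\delta/3$ residues between the inner and outer expansions is what makes an $O(p)$ (rather than $O(\delta)$) remainder achievable, and verifying this matching together with the commutation of the limit $\delta \to 0^+$ is the technical core of the argument.
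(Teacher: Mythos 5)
Your derivation of the limits \eqref{limits-T} follows essentially the paper's route: after the rescaling $u=ps$ you evaluate the same limiting integral $\int_1^\infty \frac{ds}{s^2\sqrt{s^2+Q^2-1}}$ by dominated convergence (your substitution $s=1/t$ handles all $Q<0$ in one stroke, which is tidier than the paper's three-case computation). Your route to the constant in \eqref{asymptotics-T}, however, is genuinely different. The paper never expands the integral $T(p,q)$ itself: it integrates the derivative asymptotics to write $T=C_0-\ln(p-q)+\tilde T$ with an undetermined constant, and then pins down $C_0=\ln 12 - x_0$ by evaluating $T(p_L,q_L)=L$ along the explicit homoclinic solution $\tilde u(x)=\frac{3}{2}\sech^2\left(\frac{x+x_0}{2}\right)$, whose tail gives $p_L\sim 6e^{-(L+x_0)}$. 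Your matched inner/outer expansion produces $\ln 12$ and $x_0=\ln\frac{\sqrt{3}+1}{\sqrt{3}-1}$ directly from the weakly singular integral, which is more self-contained: it needs no closed-form orbit and sidesteps the question of whether the integration ``constant'' could depend on the ray $q=Qp$. The price is the overlap bookkeeping, and you do correctly identify and verify the essential $\mp\delta/3$ cancellation between the exact outer leading term and the first-order inner correction.

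There is, though, a concrete flaw in your remainder estimate: with $\delta=p^{2/3}$ the outer error $O(p^2/\delta^2)$ (coming from $E(p,q)=O(p^2)$ against $\int_\delta^1 A(u)^{-3/2}\,du\sim \delta^{-2}$) is $O(p^{2/3})$, not $O(p)$, and the same $O(p^2/\delta^2)$ term is hiding in the inner upper-limit evaluation of $\ln\bigl(s+\sqrt{s^2+Q^2-1}\bigr)$ at $s=\delta/p$. As written your argument only yields \eqref{asymptotics-T} with remainder $O(p^{2/3})$. The fix is easy: either take $\delta\asymp p^{1/2}$, which balances $p^2/\delta^2$ and $\delta^2$ at $O(p)$ and closes the proof, or note that the two offending contributions are in fact $\mp\frac{(Q^2-1)p^2}{4\delta^2}$ and cancel exactly, just like the $\mp\delta/3$ pair. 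With that repair the proposal is a valid, and arguably more direct, alternative to the paper's proof.
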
 

\begin{proof}
To get the limits (\ref{limits-T}), we set $q = Q p$ with fixed $Q \in (-\infty,0)$, multiply the expression (\ref{dTdp}) by $3p$ and take the limit $p \to 0^+$:
	\begin{align*}
\lim_{p \to 0^+} p \frac{\partial T}{\partial p} &= - \lim_{p \to 0^+} p^2 \int_{p}^1 \frac{1-u^2}{u^2 \sqrt{(Q^2-1)p^2 + u^2 + \frac{2}{3} (p^3-u^3)}} du \\
&= - \lim_{p \to 0^+} \int_{1}^{1/p} \frac{1-p^2 x^2}{x^2 \sqrt{x^2 + Q^2 -1 + \frac{2}{3} p (1-x^3)}} dx,
\end{align*}
where we substituted $u = px$. The integrand is absolutely convergent for every $p > 0$ and the limiting integral is also convergent. Therefore, by Lebesque's Dominated Convergence Theorem, we obtain 
	\begin{align}
    \label{tech-expr-1}
\lim_{p \to 0^+} p \frac{\partial T}{\partial p} &= -  \int_{1}^{\infty} \frac{dx}{x^2 \sqrt{x^2 + Q^2 - 1}} dx = 
-\frac{1}{1-Q}, 
\end{align}
which yields the first limit in (\ref{limits-T}). Computations of the explicit integral are different between $Q \in (-\infty,-1)$, $Q = -1$, and $Q \in (-1,0)$ and are performed as follows
\begin{align*}
Q \in (-\infty,-1) : &\quad 
-  \int_{1}^{\infty} \frac{dx}{x^2 \sqrt{x^2 + Q^2 - 1}} = -\frac{1}{Q^2-1} \int_{{\rm arcsinh}(1/\sqrt{Q^2-1})}^{\infty} \frac{dy}{\sinh^2(y)} = \frac{1}{Q-1}, \\
Q = -1 : &\quad 
-  \int_{1}^{\infty} \frac{dx}{x^2 \sqrt{x^2}} = \frac{1}{2x^2} \biggr|^{x \to \infty}_{x = 1} = -\frac{1}{2}, \\
Q \in (-1,0) : &\quad 
-  \int_{1}^{\infty} \frac{dx}{x^2 \sqrt{x^2 - (1-Q^2)}} = -\frac{1}{1-Q^2} \int_{\arccosh(1/\sqrt{1-Q^2})}^{\infty} \frac{dy}{\cosh^2(y)} = -\frac{1}{1-Q}, 
\end{align*}
To get the second limit in (\ref{limits-T}), we obtain similarly from (\ref{dTdq}) that 
	\begin{align}
\lim_{p \to 0^+} p \frac{\partial T}{\partial q} &= Q \lim_{p \to 0^+} p^2 \int_{p}^1 \frac{1-u^2}{u^2 v} du + \lim_{p \to 0^+} (1-p)(1+2p) \notag \\
&= Q \int_{1}^{\infty} \frac{dx}{x^2 \sqrt{x^2 + Q^2 - 1}} + 1 \notag \\
&=\frac{1}{1-Q}.
    \label{tech-expr-2}
\end{align}
Returning to the original variables $(p,q)$, we have from (\ref{tech-expr-1}) and (\ref{tech-expr-2}) that 
$$
\frac{\partial T}{\partial p} \sim -\frac{1}{p-q}, \qquad 
\frac{\partial T}{\partial q} \sim \frac{1}{p-q},
$$
which yields the asymptotic expansion 
\begin{equation}
    \label{asympt-exp-temp}
T(p,q) = C_0 - \ln(p-q) + \tilde{T}(p,q),
\end{equation}
where $C_0$ is a uniquely specified constant and $\tilde{T}(p,q)$ is the remainder term to be estimated. 

To compute the constant $C_0$ in (\ref{asympt-exp-temp}), we recall the following solution of the second-order equation (\ref{ode-tilde}) for the homoclinic orbit with $E(p,q) = 0$:
$$
\tilde{u}(x) = \frac{3}{2} {\rm sech}^2\left(\frac{x+x_0}{2}\right),
$$
where $x_0 \in \mathbb{R}$ is arbitrary. As $x \to +\infty$, we get the expansion 
$$
\tilde{u}(x) = 6 e^{-(x+x_0)} + \mathcal{O}(e^{-2(x+x_0)}).
$$
Consider now the solution $\tilde{u}(x)$ on the interval $[0,L]$ from $(\tilde{u}(0),\tilde{u}'(0)) = (1,-\frac{1}{\sqrt{3}})$ 
to $(\tilde{u}(L),\tilde{u}'(L)) = (p_L,q_L)$. 
From $(\tilde{u}(0),\tilde{u}'(0)) = (1,-\frac{1}{\sqrt{3}})$, we obtain the unique value for $x_0$: 
$$
{\rm sech}^2\left(\frac{x_0}{2}\right) = \frac{2}{3} \quad \Rightarrow \quad 
x_0 = 2 \arccosh\left(\frac{\sqrt{3}}{\sqrt{2}}\right) > 0.
$$
From $(\tilde{u}(L),\tilde{u}'(L)) = (p_L,q_L)$, we obtain the expansion  
\begin{equation}
    \label{tech-expr-3}
p_L = 6 e^{-(L+x_0)} + \mathcal{O}(e^{-2(L+x_0)}), \quad 
q_L = -6 e^{-(L+x_0)} + \mathcal{O}(e^{-2(L+x_0)}), \quad \mbox{\rm as} \;\; L \to \infty.
\end{equation}
By using expansions (\ref{asympt-exp-temp}) and (\ref{tech-expr-3}) in $T(p_L,q_L) = L$, we obtain 
\begin{equation}
    \label{tech-expr-4}
L = C_0 - \ln(12) + L + x_0 + \mathcal{O}(e^{-L}),  \quad \mbox{\rm as} \;\; L \to \infty.
\end{equation}
It follows from (\ref{tech-expr-4}) that $C_0 = \ln(12) - x_0$ since the constant $C_0$ is independent of the parameter $L$. 
This yields the asymptotic expansion (\ref{asymptotics-T}), provided that $\tilde{T}(p,q) = \mathcal{O}(p)$ as $p \to 0^+$, $q = Qp$.

Finally, we justify the reminder term $\tilde{T}(p,q)$ in (\ref{asympt-exp-temp}). We rewrite (\ref{dTdp}) as 
\begin{align*}
    p \frac{\partial T}{\partial p} &= - \frac{p^2 (1-p)}{1 + 3(q^2-p^2) + 2p^3} \int_{p}^1 \frac{1-u^2}{u^2 \sqrt{q^2 - p^2 + u^2 + \frac{2}{3} (p^3-u^3)}} du + \frac{3pq}{1 + 3(q^2-p^2) + 2p^3} \\
    &= - p^2 [1 + \mathcal{O}(p)] \int_{p}^1 \frac{1-u^2}{u^2 \sqrt{(Q^2 - 1) p^2 + u^2 + \frac{2}{3} (p^3-u^3)}} du + \mathcal{O}(p^2), 
\end{align*}
as $p \to 0^+$ and $q = Q p \to 0^-$. We need to prove that 
\begin{align*}
- p^2 \int_{p}^1 \frac{1-u^2}{u^2 \sqrt{(Q^2 - 1) p^2 + u^2 + \frac{2}{3} (p^3-u^3)}} du = -\frac{1}{1-Q} + \mathcal{O}(p). 
\end{align*}
To show this, we write 
\begin{align*}
& \qquad - p^2 \int_{p}^1 \frac{1-u^2}{u^2 \sqrt{(Q^2 - 1) p^2 + u^2 + \frac{2}{3} (p^3-u^3)}} du \\
&= 
- p^2 \int_{p}^1 \frac{du}{u^2 \sqrt{(Q^2 - 1) p^2 + u^2 + \frac{2}{3} (p^3-u^3)}} + p^2 T(p,q),
\end{align*}
where $p^2 T(p,q) = \mathcal{O}(p^2 |\ln(p)|)$ is much smaller than $\mathcal{O}(p)$ due to the leading-order term in (\ref{asympt-exp-temp}). On the other hand, we use the same substitution $u = px$ and obtain 
\begin{align*}
- p^2 \int_{p}^1 \frac{du}{u^2 \sqrt{(Q^2 - 1) p^2 + u^2 + \frac{2}{3} (p^3-u^3)}} &= 
- \int_{1}^{1/p} \frac{dx}{x^2 \sqrt{x^2 + Q^2 - 1 + \frac{2}{3} p (1-x^3)}} \\
&= -\int_{1}^{1/p} \frac{dx}{x^2 \sqrt{x^2 + Q^2 - 1}} + {\rm Rem}, 
\end{align*}  
where 
\begin{align*}
{\rm Rem} &= \int_{1}^{1/p} \frac{dx}{x^2 \sqrt{x^2 + Q^2 - 1}} - \int_{1}^{1/p} \frac{dx}{x^2 \sqrt{x^2 + Q^2 - 1 + \frac{2}{3} p (1-x^3)}} \\
&= \frac{2}{3} p \int_1^{1/p} \frac{(1-x^3) dx}{x^2 A B (A+B)},
\end{align*} 
with $A := \sqrt{x^2 + Q^2 - 1}$ and $B := \sqrt{x^2 + Q^2 - 1 + \frac{2}{3} p (1-x^3)}$. By the same Lebesgue's Dominated Convergence Theorem, we have 
\begin{align*}
\lim_{p \to 0^+} \int_1^{1/p} \frac{(1-x^3) dx}{x^2 A B (A+B)} = \int_1^{\infty} \frac{(1-x^3) dx}{2 x^2 \sqrt{[x^2 + Q^2 - 1]^3}} < \infty,
\end{align*} 
which proves that ${\rm Rem} = \mathcal{O}(p)$ as $p \to 0^+$. Finally, we have 
\begin{align*}
-\int_{1}^{1/p} \frac{dx}{x^2 \sqrt{x^2 + Q^2 - 1}} = -\int_{1}^{\infty} \frac{dx}{x^2 \sqrt{x^2 + Q^2 - 1}} + 
\int_{1/p}^{\infty} \frac{dx}{x^2 \sqrt{x^2 + Q^2 - 1}},
\end{align*}  
where the first term is $-\frac{1}{1-Q}$ according to the explicit computation above and the second term is of the order of $\mathcal{O}(p^2)$. Thus, we have proven that 
$$
p \frac{\partial T}{\partial p} = -\frac{1}{1-Q} + \mathcal{O}(p) \quad \mbox{\rm as} \quad p \to 0^+, \;\; q = Q p, 
$$
which suggests in view of (\ref{asympt-exp-temp}) that $\frac{\partial \tilde{T}}{\partial p} = \mathcal{O}(1)$ so that $\tilde{T}(p,q) = \mathcal{O}(p)$ as $p \to 0^+$. 
\end{proof}

\begin{remark}
	It follows from (\ref{der-p}) and (\ref{der-q}) that the period function $T(p,q)$ satisfies the first-order PDE
	$$
	q \frac{\partial T}{\partial p} + p(1-p) \frac{\partial T}{\partial q} = 1.
	$$
	The leading-order part of the asymptotic expansion (\ref{asympt-exp-temp}) satisfies the truncated equation 
	$$
	T^{(0)}(p,q) = C_0 - \ln(p-q) : \quad q \frac{\partial T^{(0)}}{\partial p} + p \frac{\partial T^{(0)}}{\partial q} = 1,
	$$
	whereas the remainder term $\tilde{T}(p,q)$ satisfies 
\begin{equation}
\label{pde-for-T}
q \frac{\partial \tilde{T}}{\partial p} + p(1-p) \frac{\partial \tilde{T}}{\partial q} = p^2 \frac{\partial T^{(0)}}{\partial q} = \frac{p^2}{p-q} = \mathcal{O}(p)
\end{equation}
	as $p \to 0^+$ and $q = Qp$. The estimates of Lemma \ref{lem-period-1-asymptotics} for $\tilde{T}(p,q) = \mathcal{O}(p)$ are compatible with (\ref{pde-for-T}). However, equation (\ref{pde-for-T}) is not useful for the analysis of the remainder terms because the homogeneous solutions of (\ref{pde-for-T}) may exhibit strong singularities as $p \to 0^+$ and $q = Qp$.
\end{remark}

\begin{remark}
The asymptotic behavior of the period function $T(p,q)$ is similar to (\ref{asymptotics-T}) but for the second-order equation with the cubic power was obtained in \cite{KP21} based on the explicit expansion of Jacobi elliptic functions with respect to the elliptic modulus near a homoclinic orbit, which was studied in \cite{Berkolaiko}. However, we have found for the second-order equation (\ref{ode-tilde}) with the quadratic power that the explicit expansion of Jacobi elliptic functions for the solution $\tilde{u} = \tilde{u}(x)$ yields vanishing first-order corrections with respect to the elliptic modulus. As a result, the asymptotic behavior of $T(p,q)$ is obtained in Lemma \ref{lem-period-1-asymptotics} by studying the weakly singular integrals directly.
\end{remark}

\subsection{The period function from the point $(p,q)$}

Let $(p,q) \in (0,1) \times (-\infty,0)$ be again a point on the phase plane of the 
second-order equation (\ref{ode-tilde}). We are now considering the integral curve between the points 
$(p,q)$ and $(p_0,0)$, where $p_0 = p_0(p,q) \in (0,1)$ is uniquely found from the implicit equation given by 
$$
E(p,q) = q^2 - p^2 + \frac{2}{3} p^3 = -p_0^2 + \frac{2}{3} p_0^3.
$$
The period function for this part of the integral curve between the two points is given by 
\begin{equation}
    \label{period-function-second}
T_0(p,q) = \int_{p_0}^p \frac{du}{v}, \quad v = \sqrt{E(p,q)+A(u)}, \quad A(u) = u^2 - \frac{2}{3} u^3.
\end{equation}
Figure \ref{fig:enter-label} shows the part of the integral curve between points $(p,q)$ and $(p_0,0)$.

\begin{figure}[htb!]
    \centering
    \includegraphics[width=8cm,height=5.5cm]{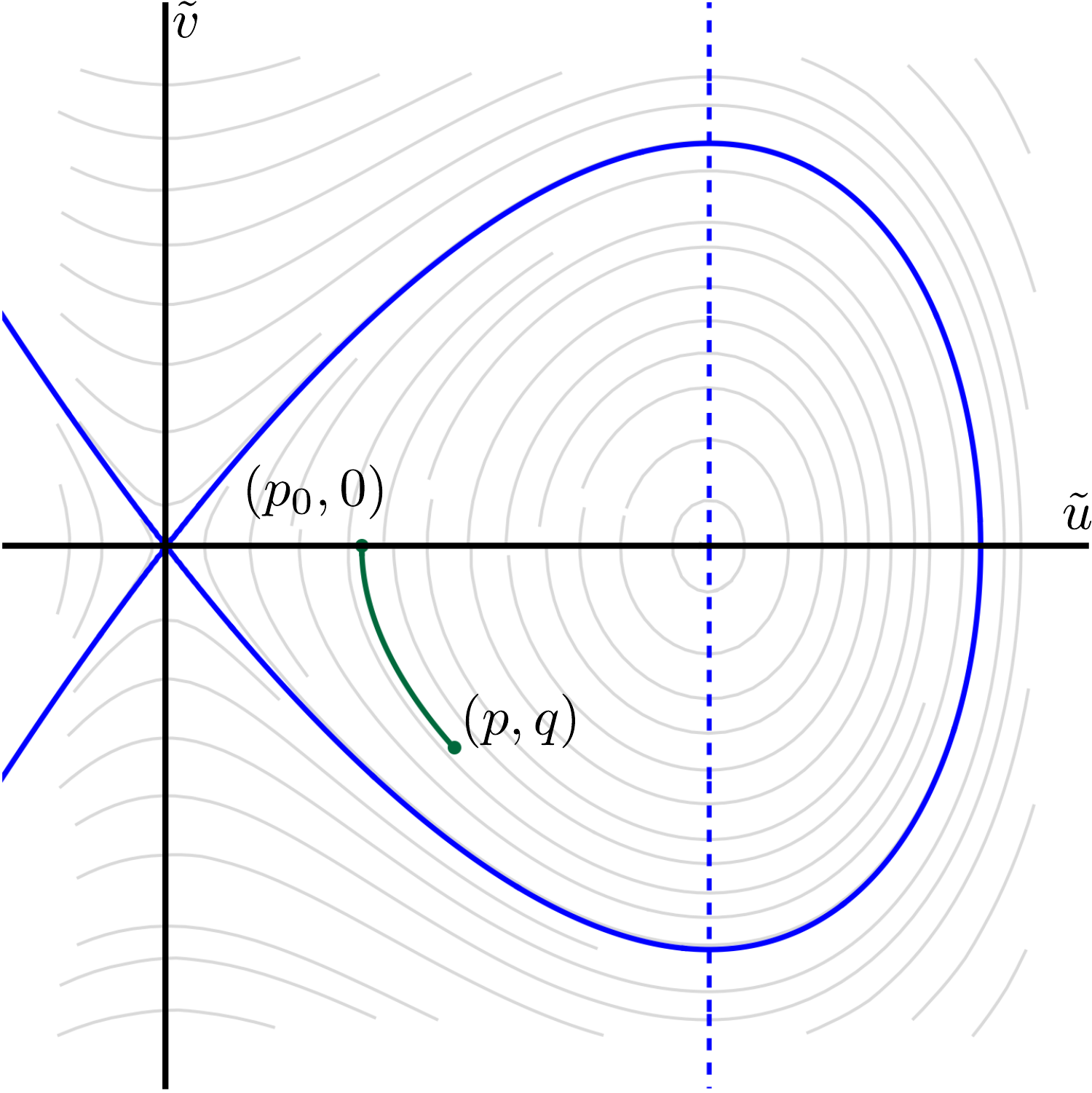}
    \caption{An example of the period function $T_0(p,q)$ given by (\ref{period-function-second}) in terms of the phase portrait obtained from the level curves of $E(\tilde{u},\tilde{v})$ given by (\ref{invariant}). The homoclinic orbit is in blue, the integral curve from $(p,q)$ to $(p_0,0)$ is in green, and the other orbits are in gray. }
    \label{fig:enter-label}
\end{figure}

The following lemma gives the monotonicity  of $T_0(p,q)$ with respect to two independent parameters $(p,q)$. 

\begin{lemma}
	\label{lem-period-2}
	For every $(p,q) \in (0,1) \times (-\infty,0)$, we have 
	\begin{equation}
	\label{monotonicity-second}
	\frac{\partial T_0}{\partial q} < 0.
	\end{equation} 
Moreover, if $(p,q) \in (0,\frac{1}{2}] \times (-\infty,0)$, then 
	\begin{equation}
	\label{monotonicity-third}
	\frac{\partial T_0}{\partial p} < 0.
    \end{equation}
\end{lemma}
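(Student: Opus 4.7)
The plan is to reparametrize the problem via $p_0$. Since $v(u)=\sqrt{A(u)-A(p_0)}$ depends on $(p,q)$ only through $p_0=p_0(p,q)$, defined implicitly by $A(p_0)=A(p)-q^2$, I set $F(p,p_0):=\int_{p_0}^p du/v$ and write $T_0(p,q)=F(p,p_0(p,q))$. Implicit differentiation gives $\partial_q p_0=-2q/A'(p_0)>0$ and $\partial_p p_0|_q=A'(p)/A'(p_0)>0$, so
\begin{equation*}
\frac{\partial T_0}{\partial q}=\frac{-2q}{A'(p_0)}\,\frac{\partial F}{\partial p_0}\bigg|_p,\qquad
\frac{\partial T_0}{\partial p}\bigg|_q=\frac{1}{|q|}+\frac{A'(p)}{A'(p_0)}\,\frac{\partial F}{\partial p_0}\bigg|_p.
\end{equation*}
Thus (\ref{monotonicity-second}) reduces to showing $\partial_{p_0}F|_p<0$ for all $p\in(0,1)$, whereas (\ref{monotonicity-third}) is a quantitative sharpening that will force the restriction $p\leq 1/2$.

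For (\ref{monotonicity-second}), I would borrow the renormalization strategy from the proof of Lemma \ref{lem-period-1}. Multiplying $T_0$ by $A(1)-A(p_0)>0$ and writing $A(1)-A(p_0)=v^2+[A(1)-A(u)]$ splits the integral as $\int_{p_0}^p v\,du+\int_{p_0}^p [A(1)-A(u)]/v\,du$. The same exact differential
\begin{equation*}
d\!\left[\frac{(A(u)-A(1))\,v}{A'(u)}\right]=\left(1-\frac{A''(u)[A(u)-A(1)]}{[A'(u)]^2}\right)v\,du+\frac{A(u)-A(1)}{2v}\,du
\end{equation*}
used in Lemma \ref{lem-period-1} then trades the weakly singular integral for a regular one plus a boundary term at $u=p$; the term at $u=p_0$ vanishes thanks to $v(p_0)=0$. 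Using the simplification $3-2A''(u)[A(u)-A(1)]/[A'(u)]^2=(5u^2+1)/(3u^2)$ already computed in Lemma \ref{lem-period-1}, this yields the renormalized identity
\begin{equation*}
[A(1)-A(p_0)]\,T_0=\int_{p_0}^p \frac{(5u^2+1)\,v}{3u^2}\,du+\frac{(1-p)(1+2p)\,|q|}{3p}.
\end{equation*}
Differentiating with respect to $p_0$ at fixed $p$, and using the algebraic identity $T_0-\tfrac{1}{2}\int_{p_0}^p(5u^2+1)/(3u^2 v)\,du=\int_{p_0}^p(u^2-1)/(6u^2 v)\,du$ to combine the reappearing $T_0$ with the transformed integral, I obtain
\begin{equation*}
[A(1)-A(p_0)]\,\frac{\partial F}{\partial p_0}\bigg|_p=A'(p_0)\left[\int_{p_0}^p\frac{u^2-1}{6u^2 v}\,du-\frac{(1-p)(1+2p)}{6p|q|}\right].
\end{equation*}
Both bracketed terms are manifestly negative for $u\in(p_0,p)\subset(0,1)$, so $\partial_{p_0}F|_p<0$, which implies (\ref{monotonicity-second}).

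For (\ref{monotonicity-third}), I would use the simpler integration by parts based on $d[2v/A'(u)]=du/v-2vA''(u)/[A'(u)]^2\,du$, which together with $v(p_0)=0$ gives
\begin{equation*}
T_0=\frac{2|q|}{A'(p)}+2\int_{p_0}^p\frac{v\,A''(u)}{[A'(u)]^2}\,du.
\end{equation*}
Differentiating in $p$ at fixed $q$, with $\partial_p v|_q=-A'(p)/(2v)$ and $\partial_p p_0|_q=A'(p)/A'(p_0)$, the contribution $\partial_p[2|q|/A'(p)]=-2|q|A''(p)/[A'(p)]^2$ is cancelled exactly by the Leibniz boundary term at $u=p$, and the result collapses to the clean formula
\begin{equation*}
\frac{\partial T_0}{\partial p}\bigg|_q=-A'(p)\int_{p_0}^p\frac{A''(u)}{[A'(u)]^2\,v}\,du.
\end{equation*}
For $p\in(0,1/2]$ one has $A''(u)=2(1-2u)>0$ on $(p_0,p)$, so the integral is strictly positive and $\partial_p T_0|_q<0$. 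The main obstacle is (\ref{monotonicity-second}) in the regime $p>1/2$, where $A''$ changes sign inside $(p_0,p)$; the naive formula $\partial_q T_0=-2/A'(p)-2|q|\int A''/([A']^2 v)\,du$ then has an integrand of indefinite sign, so no pointwise bound works, and only the global renormalized identity above delivers the sign across the full parameter range.
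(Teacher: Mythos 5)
Your proposal is correct, and it lands on exactly the same two key identities as the paper: $[E(p,q)+A(1)]\,\partial_q T_0 = q\int_{p_0}^p\frac{1-u^2}{3u^2v}\,du-\frac{(1-p)(1+2p)}{3p}$ and $\partial_p T_0=-p(1-p)\int_{p_0}^p\frac{1-2u}{u^2(1-u)^2v}\,du$. For $\partial_q T_0$ your route is essentially the paper's: the same renormalization by $E(p,q)+A(1)=A(1)-A(p_0)$ and the same exact differential, merely reorganized by passing through the single variable $p_0$ (your chain-rule factors $\partial_q p_0=-2q/A'(p_0)$ and $\partial_p p_0=A'(p)/A'(p_0)$ reproduce the paper's formula verbatim, as one checks using $|q|=-q$). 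For $\partial_p T_0$ you genuinely diverge: the paper performs a \emph{second} renormalization, multiplying $T_0$ by $E(p,q)=-A(p_0)$ to get a companion identity with the kernel $u(2-u)/[3(1-u)^2]$, and then subtracts the two expressions so that the inconclusive $-q$ terms cancel; you instead integrate by parts once with $d[2v/A'(u)]=\frac{du}{v}-\frac{2vA''(u)}{[A'(u)]^2}du$, observe that the boundary contribution at $u=p$ exactly cancels $\partial_p[2|q|/A'(p)]$ and that the one at $u=p_0$ vanishes since $v(p_0)=0$, and read off the clean formula directly. Your derivation is shorter and makes the mechanism transparent (the sign of $\partial_p T_0$ is literally the sign of $-A''$ on $(p_0,p)$, which is where the restriction $p\le\frac12$ enters), at the cost of differentiating under an integral whose $p$-derivative has a movable $O((u-p_0)^{-1/2})$ endpoint singularity --- integrable and justifiable by dominated convergence, but worth a sentence, which is precisely the issue the paper's double-renormalization is designed to sidestep. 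Your closing remark correctly identifies why the naive formula for $\partial_q T_0$ fails for $p>\frac12$ and why the renormalized identity is needed there.
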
 

\begin{proof}
    Similarly to the proof of Lemma \ref{lem-period-1}, we write 
   	\begin{align*}
[E(p,q) + A(1)] T_0(p,q) = \int_{p_0}^{p} \left(3 - \frac{2 A''(u) [A(u) - A(1)]}{[A'(u)]^2} \right) v du -\frac{(1-p)(1+2p)}{3p} q.
\end{align*}
The right-hand side is continuously differentiable in $(p,q)$ from which we obtain 
	\begin{align}
    \label{dT0dp}
[E(p,q) + A(1)] \frac{\partial T_0}{\partial p} &= -p(1-p) \int_{p_0}^p \frac{1-u^2}{3u^2 v} du - q
\end{align}
and 
	\begin{align}
        \label{dT0dq}
[E(p,q) + A(1)] \frac{\partial T_0}{\partial q} &= q \int_{p_0}^{p} \frac{1-u^2}{3u^2 v} du - \frac{(1-p)(1+2p)}{3p}.
\end{align}
Since $E(p,q) + A(1) > 0$ for $(p,q) \in (0,1) \times (-\infty,0)$, we have $\frac{\partial T_0}{\partial q} < 0$, which yields 
(\ref{monotonicity-second}). 

To get monotonicity of $T_0$ with respect to $p$, we also obtain 
   	\begin{align*}
E(p,q) T_0(p,q) = \int_{p_0}^{p} \left(3 - \frac{2 A''(u) A(u)}{[A'(u)]^2} \right) v du + \frac{p (3-2p)}{3 (1-p)} q,
\end{align*}
which is allowed for the integral on $[p_0,p]$ with $0 < p_0 < p < 1$. Differentiating in $p$ yields now 
\begin{align}
E(p,q) \frac{\partial T_0}{\partial p} &= -p(1-p) \int_{p_0}^p \left[1 - \frac{2 A''(u) A(u)}{[A'(u)]^2} \right] \frac{du}{v} \notag \\
& \quad -q \left[3 - \frac{2 A''(p) A(p)}{[A'(p)]^2} \right] +  q \frac{d}{dp} \frac{p (3-2p)}{3 (1-p)} \notag \\
&= -p(1-p) \int_{p_0}^p \frac{u(2-u)}{3 (1-u)^2 v} du - q. \label{tech-expr-5}
\end{align}
The right-hand sides of both (\ref{dT0dp}) and (\ref{tech-expr-5}) contain the sum of a negative and a positive term. By subtracting one expression from another, the positive term $-q$ is canceled out and we obtain 
\begin{align*}
\frac{\partial T_0}{\partial p} &= -p(1-p) \int_{p_0}^p \frac{(1-2u)}{u^2 (1-u)^2 v} du,
\end{align*}
where we have used that $A(1) = \frac{1}{3}$. The right-hand side is strictly negative if $p \in (0,\frac{1}{2}]$ for which $1 - 2 u \geq 0$ for $u \in [p_0,p]$. This yields (\ref{monotonicity-third}). 
\end{proof}

The following lemma gives the singular behavior of the two period functions $T(p,q)$ and $T_0(p,q)$ as $(p,q) \to (1,0)$. It is used in the proof of 
Proposition \ref{lem-lower-bound} below. 

\begin{lemma}
	\label{lem-period-3}
Let $T(p,q)$ and $T_0(p,q)$ be defined by (\ref{period-function}) and (\ref{period-function-second}), respectively. Then, $T(p,q)$ satisfies the asymptotic expansion
	\begin{align}
\label{center-point-asymptotics}
T(p,q) = \arcsin \frac{1-p}{\sqrt{(1-p)^2+q^2}} + \mathcal{O}(|1-p|), \quad 
\mbox{\rm as} \;\; p \to 1^-, \;\; q = Q(1-p), 
\end{align} 
whereas $T_0(p,q)$ has the limit 
\begin{align}
\label{center-point-limit}
\lim_{q = Q(1-p), p \to 1^-} T_0(p,q) = \frac{\pi}{2} - \arcsin \frac{1-p}{\sqrt{(1-p)^2+q^2}},
\end{align} 
where $Q \in (-\infty,0)$ is fixed.
\end{lemma}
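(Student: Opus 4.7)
The plan is to centre the analysis at the centre point $(1,0)$ by substituting $u = 1-y$, so that the phase-plane geometry near $(1,0)$ reduces to that of a harmonic oscillator with period $2\pi$. A direct cubic identity gives $A(1-y) = \tfrac{1}{3} - y^2 + \tfrac{2}{3}y^3$, and combining with $E(p,q) + \tfrac{1}{3} = q^2 + A(1) - A(p) = (1-p)^2 + q^2 - \tfrac{2}{3}(1-p)^3$ yields
\begin{equation*}
E(p,q) + A(1-y) = R^2 - y^2 + \tfrac{2}{3}\bigl(y^3 - (1-p)^3\bigr), \qquad R := \sqrt{(1-p)^2 + q^2}.
\end{equation*}
Along the ray $q = Q(1-p)$ with $Q < 0$ fixed, $R = (1-p)\sqrt{1+Q^2}$, the cubic correction is of order $(1-p)^3$, and the leading part $R^2 - y^2$ has the natural harmonic-oscillator form.

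For (\ref{center-point-asymptotics}), I rewrite
\begin{equation*}
T(p,q) = \int_0^{1-p} \frac{dy}{\sqrt{R^2 - y^2 + \tfrac{2}{3}(y^3 - (1-p)^3)}}
\end{equation*}
and perform the trigonometric substitution $y = R\sin\theta$, which maps $[0, 1-p]$ to $[0, \arcsin((1-p)/R)]$. On this interval, $R^2 - y^2 = R^2\cos^2\theta \geq Q^2(1-p)^2$ is bounded below, whereas the cubic remainder is of order $(1-p)^3$. Expanding the square root as $1/(R\cos\theta)\cdot(1+O(1-p))$ and integrating gives the leading term $\arcsin((1-p)/R)$ plus an $O(1-p)$ error, which is (\ref{center-point-asymptotics}).

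For (\ref{center-point-limit}), the same change of variables produces
\begin{equation*}
T_0(p,q) = \int_{1-p}^{y_0} \frac{dy}{\sqrt{R^2 - y^2 + \tfrac{2}{3}(y^3 - (1-p)^3)}},
\end{equation*}
where $y_0 := 1 - p_0$ is the relevant positive root of the radicand. An implicit function argument at the centre (the radicand reduces to $R^2 - y^2$ as $(p,q)\to(1,0)$) gives $y_0 = R + O((1-p)^2)$, so the trigonometric substitution sends the interval of integration to $[\arcsin((1-p)/R),\, \pi/2 + O(1-p)]$. Passing to the limit yields $\pi/2 - \arcsin(1/\sqrt{1+Q^2})$, which coincides with the right-hand side of (\ref{center-point-limit}) since $(1-p)/\sqrt{(1-p)^2+q^2} = 1/\sqrt{1+Q^2}$ along the ray.

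The main technical obstacle is controlling the integrable square-root singularity at the moving endpoint $y_0$ in the $T_0$ integral: $y_0$ converges to the branch point $R$ of the leading-order radicand at the same rate as $1-p$. This is resolved by factoring the cubic $R^2 - y^2 + \tfrac{2}{3}(y^3 - (1-p)^3)$ as $\tfrac{2}{3}(y_0 - y)(y - y_-)(y_+ - y)$ with $y_- = -R + O((1-p)^2)$ and $y_+ = \tfrac{3}{2} + O(1-p)$, so that the behaviour near $y = y_0$ matches that of $1/\sqrt{R^2 - y^2}$ near $y = R$ up to a factor $1 + O(1-p)$. Dominated convergence then delivers the stated limit, and the same factorization provides the uniform bound needed to justify the $O(1-p)$ remainder in (\ref{center-point-asymptotics}).
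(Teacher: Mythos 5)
Your overall route is the same as the paper's: translate to the center point, substitute $y=R\sin\theta$ with $R=\sqrt{(1-p)^2+q^2}$, and treat the cubic term as an $\mathcal{O}(1-p)$ perturbation of the harmonic radicand. For $T(p,q)$ your argument is complete and correct: on $[0,1-p]$ one has $R^2-y^2\geq q^2=Q^2(1-p)^2$ while the cubic correction is $\mathcal{O}((1-p)^3)$, so the integrand is $(1+\mathcal{O}(1-p))\,d\theta$ uniformly; this uniform pointwise bound is, if anything, a cleaner justification of the $\mathcal{O}(|1-p|)$ remainder than the paper's appeal to dominated convergence.

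For $T_0(p,q)$ you have correctly identified the one genuine difficulty, which the paper itself glosses over: the turning point satisfies $y_0^2-R^2=\tfrac{2}{3}(y_0^3-(1-p)^3)>0$, so $y_0>R$ and the substitution $y=R\sin\theta$ does not reach the endpoint (the paper's displayed lower limit $-\arcsin\bigl((1-p_0)/((1-p)\sqrt{1+Q^2})\bigr)$ is literally $\arcsin$ of a quantity exceeding $1$). Your proposed fix, factoring the radicand as $\tfrac{2}{3}(y_0-y)(y-y_-)(y_+-y)$ with $y_\pm$ as stated, is the right idea, but the sentence claiming the integrand ``matches $1/\sqrt{R^2-y^2}$ up to a factor $1+\mathcal{O}(1-p)$'' cannot be taken pointwise: for $y\in(R,y_0)$ the comparison quantity $R^2-y^2$ is negative while $(y_0-y)(y-y_-)$ is positive, so no multiplicative comparison holds near the endpoint. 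The clean completion of your own factorization is to use the trigonometric substitution adapted to the actual roots, $y=\tfrac{1}{2}(y_0+y_-)+\tfrac{1}{2}(y_0-y_-)\sin\phi$, which turns the singular factor into $d\phi$ exactly and leaves the integrand $1/\sqrt{\tfrac{2}{3}(y_+-y)}=1+\mathcal{O}(1-p)$ uniform on the whole range; since $y_0=R+\mathcal{O}((1-p)^2)$ and $y_-=-R+\mathcal{O}((1-p)^2)$, the lower limit tends to $\arcsin(1/\sqrt{1+Q^2})$ and the upper limit is exactly $\pi/2$. With that one adjustment your argument not only recovers the limit \eqref{center-point-limit} but also yields an $\mathcal{O}(1-p)$ remainder for $T_0$, which the paper explicitly says it does not obtain.
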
 

\begin{proof}
We obtain from (\ref{energy-level}) and (\ref{period-function}) that 
\begin{align*}
    T(p,q) &= \int_p^1 \frac{du}{\sqrt{q^2 - p^2 + \frac{2}{3} p^3 + u^2 - \frac{2}{3} u^3}} \\
    &= \int_{p-1}^0 \frac{dx}{\sqrt{(1-p)^2 + q^2 - x^2 - \frac{2}{3} (1-p)^3 - \frac{2}{3} x^3}} \\
    &= \int_{\arcsin\frac{p-1}{\sqrt{(1-p)^2 + q^2}}}^0 \frac{\cos(t) dt}{\sqrt{\cos^2(t) - \frac{2}{3} \frac{(1-p)^3}{(1-p)^2 + q^2} - \frac{2}{3} \sqrt{(1-p)^2 + q^2}  \sin(t)^3}} \\
     &= \int_{-\arcsin\frac{1}{\sqrt{1 + Q^2}}}^0 \frac{\cos(t) dt}{\sqrt{\cos^2(t) - \frac{2}{3} \frac{(1-p)}{1 + Q^2} - \frac{2}{3} \sqrt{1 + Q^2} (1-p) \sin(t)^3}},
\end{align*}
where we have used substitutions $u = 1 + x$ and $x = \sqrt{q^2 + (1-p)^2} \sin(t)$ as well as $q = Q(1-p)$ with fixed $Q \in (-\infty,0)$. The limit $p \to 1^-$ is now well defined due to Lebesgue's Dominated Convergence Theorem and yields 
$$
\lim_{q = Q(1-p), p \to 1^-} T(p,q) = \arcsin \frac{1}{\sqrt{1+Q^2}},
$$
which gives the leading-order term in (\ref{center-point-asymptotics}). Similarly, we have 
\begin{align*}
    T_0(p,q) &= \int_{p_0}^1 \frac{du}{\sqrt{q^2 - p^2 + \frac{2}{3} p^3 + u^2 - \frac{2}{3} u^3}} \\
     &= \int_{-\arcsin\frac{1-p_0}{(1-p) \sqrt{1 + Q^2}}}^{-\arcsin\frac{1}{\sqrt{1 + Q^2}}} \frac{\cos(t) dt}{\sqrt{\cos^2(t) - \frac{2}{3} \frac{(1-p)}{1 + Q^2} - \frac{2}{3} \sqrt{1 + Q^2} (1-p) \sin(t)^3}},
\end{align*}
where we have used the definition 
$$
(1 - p_0)^2 - \frac{2}{3} (1-p_0)^3 = (1-p)^2 - \frac{2}{3} (1-p)^3 + q^2
$$
Since 
$$
\lim_{q = Q(1-p), p \to 1^-}  \frac{1-p_0}{1-p} = \sqrt{1+Q^2}, 
$$
we obtain 
$$
\lim_{q = Q(1-p), p \to 1^-} T_0(p,q) = \frac{\pi}{2} - \arcsin \frac{1}{\sqrt{1+Q^2}}.
$$
which gives the limit in (\ref{center-point-limit}). 

In order to justify the reminder term in (\ref{center-point-asymptotics}), we write
\begin{align*}
    {\rm Rem} &:= T(p,q) - \arcsin \frac{1-p}{\sqrt{(1-p)^2+q^2}} \\
    &= \frac{2 (1-p)}{3 (1+Q^2)} \int_{-\arcsin\frac{1}{\sqrt{1 + Q^2}}}^0 \frac{1 + (1+Q^2)^{3/2} \sin^3(t)}{A [ \cos(t) + A] } dt,
\end{align*}
where $A := \sqrt{\cos^2(t) - \frac{2}{3} \frac{(1-p)}{1 + Q^2} - \frac{2}{3} \sqrt{1 + Q^2} (1-p) \sin(t)^3}$. The integral converges for every fixed $Q \in (-\infty,0)$ with the limit justified by Lebesgue's Dominated Convergence Theorem:
$$
\lim_{p \to 1^-} \int_{-\arcsin\frac{1}{\sqrt{1 + Q^2}}}^0 \frac{1 + (1+Q^2)^{3/2} \sin^3(t)}{A [ \cos(t) + A] } dt = 
\int_{-\arcsin\frac{1}{\sqrt{1 + Q^2}}}^0 \frac{1 + (1+Q^2)^{3/2} \sin^3(t)}{2 \cos^2(t)} dt < \infty.
$$
Hence, the remainder term in (\ref{center-point-asymptotics}) is of the order of $\mathcal{O}(|1-p|)$. 
\end{proof}

\begin{remark}
    The justification of the remainder term developed for $T(p,q)$ in the proof of Lemma \ref{lem-period-3} does not work for $T_0(p,q)$ since the lower limit of integration converges to $\frac{\pi}{2}$ as $p \to 1^-$ and the integral of $1/\cos^2(t)$ diverges at the lower limit. Hence, Lebesgue's Dominated Convergence Theorem cannot be used and the order of the remainder term is not obtained. However, we are only using the limit (\ref{center-point-limit}) in the proof of Proposition \ref{lem-lower-bound}. 
\end{remark}

\subsection{The unique positive state in the limit of long graphs}

Based on Lemma \ref{lem-period-1-asymptotics} and the linear estimates from \cite{Berkolaiko}, we obtain another proof of the uniqueness of the ground state as well as  the exponential smallness of the ground state in the limit of long graphs. 

\begin{theorem}
	\label{theorem-localized-state}
There exist $L_* > L_0$ such that if $L_{\rm min} := \min_j L_j > L_*$, then the positive ground state $u_*$ of $H(u)$ is unique and satisfies 
\begin{equation}
\label{bound-on-u}
\| u_* - 1\|_{L^{\infty}(\Gamma_0)} \leq C e^{-L_{\rm min}},
\end{equation}
where $C$ is a positive constant and $\Gamma_0$ is the part of $\Gamma$ without the pendants.
\end{theorem}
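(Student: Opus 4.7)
The plan is to work in the shifted variable $\tilde u := 1 - u$, which satisfies (\ref{ode-tilde}) on each edge, the Neumann--Kirchhoff conditions at each interior vertex, and the inhomogeneous Dirichlet condition $\tilde u = 1$ at each boundary vertex of a pendant. By Lemma \ref{lem-comparison} any steady state has $\tilde u \in [0,1]$, so proving (\ref{bound-on-u}) and uniqueness reduces to showing that any nontrivial $\tilde u_*$ is uniformly small of order $O(e^{-L_{\min}})$ on $\Gamma_0$ and that its vertex values on $\Gamma_0$ are uniquely determined.

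First I would dispatch each pendant through the period function. Parameterize a pendant $e_j$ as $[0,L_j]$, with the interior attaching vertex at $x=0$ and the boundary vertex at $x=L_j$. Let $p_j := \tilde u_*(0) \in [0,1)$ and $q_j := \tilde u_*'(0)$. Since $\tilde u_*(L_j)=1$ and $\tilde u_* \in [0,1]$, necessarily $q_j > 0$ and the phase-plane trajectory of (\ref{ode-tilde}) traverses from $(p_j,q_j)$ up to $(1,\tilde u_*'(L_j))$ along the upper branch. By reversibility $\tilde v \mapsto -\tilde v$ of (\ref{ode-tilde}) and the definition of $T$ in (\ref{period-function}),
\begin{equation*}
L_j \;=\; T(p_j, -q_j) \;=\; -\ln\!\left(\frac{p_j + q_j}{12}\right) - x_0 + O(p_j),
\end{equation*}
where the last expansion is Lemma \ref{lem-period-1-asymptotics}. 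Solving implicitly yields a smooth pendant map $q_j = Q_j(p_j;L_j)$ with $Q_j(0;L_j) = 12\,e^{-x_0-L_j}(1+o(1))$ and $\partial_{p_j}Q_j(0;L_j) = -1 + O(e^{-L_j})$; in particular $p_j,q_j = O(e^{-L_j})$.

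Next I would handle the edges of $\Gamma_0$. On each such edge I linearize (\ref{ode-tilde}) about zero, writing $\tilde u_* = w + R$ where $w$ solves $w''-w=0$ with the same endpoint values as $\tilde u_*$ and $R$ absorbs the quadratic term. The linear Dirichlet-to-Neumann map on each edge has the explicit $\cosh/\sinh$ form, and the global Dirichlet-to-Neumann operator on $\Gamma_0$ (from vertex values of $w$ to weighted sums of outward normal derivatives) is invertible with a bound depending only on the edge-length bounds and the graph topology; this is exactly the content of the linear estimates from \cite{Berkolaiko}. Combining that invertibility with the NK condition at each interior vertex of $\Gamma_0$ and with the pendant identities $q_j = Q_j(p_j;L_j)$ produces a closed finite-dimensional smooth system for the vertex values of $\tilde u_*$, whose linearization is invertible and whose inhomogeneous forcing has size $O(e^{-L_{\min}})$ coming from the constants $Q_j(0;L_j)$. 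The implicit function theorem then gives a unique small solution of order $O(e^{-L_{\min}})$, and propagating via $w + R$ along each interior edge gives the claimed $L^\infty(\Gamma_0)$ bound (\ref{bound-on-u}).

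The main obstacle is the simultaneous control of uniform invertibility of the linearized vertex system and of the nonlinear corrections. The subtle point is the near-degeneracy $\partial_{p_j}Q_j(0;L_j) = -1 + O(e^{-L_j})$, which mirrors the fact that the stable manifold of the saddle at the origin is tangent to the direction $p+q=0$; this must not cancel the diagonal contribution of the Dirichlet-to-Neumann map on $\Gamma_0$. Verifying that the residual $O(e^{-L_{\min}})$ piece is nondegenerate, so that the coupled vertex system remains uniformly invertible as $L_{\min}\to\infty$, is where the linear estimates from \cite{Berkolaiko} do essential work, and where the sharpness of the expansion in Lemma \ref{lem-period-1-asymptotics} is indispensable for closing the fixed-point argument.
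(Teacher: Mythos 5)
Your proposal follows essentially the same route as the paper's proof: decompose $\Gamma$ into $\Gamma_0$ and the set of pendants, resolve each pendant through the period function $T$ and the expansion of Lemma \ref{lem-period-1-asymptotics}, resolve $\Gamma_0$ through the linear Dirichlet-to-Neumann estimates of \cite{Berkolaiko} (the paper invokes its Theorem~2.9 directly), and close with the implicit function theorem for the vertex values at the attaching vertices. The near-degeneracy you flag is harmless for exactly the reason the paper's flux equation makes visible: each pendant and each interior edge contributes a leading-order outward flux of the same sign in $p_j$, so the diagonal of the vertex system is $d_j+m_j>0$ and no cancellation with the Dirichlet-to-Neumann contribution can occur.
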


\begin{proof}
Consider the subset $V_0$ of interior vertices $V = \{ v_j \}$ of the graph $\Gamma$ which are also the boundary vertices between the set of pendants denoted by $P_0$ and $\Gamma_0 = \Gamma \backslash P_0$. We add the inhomogeneous Dirichlet conditions $p_j = \tilde{u}(v_j) \geq 0$ at $v_j \in V_0 \subset V$, where $\tilde{u}(x) = 1 - u(x)$. The proof is obtained by partioning the graph $\Gamma$ into $\Gamma_0$ and $P_0$ with two elliptic equations: 
\begin{equation}
	\label{bvp}
	\left\{ \begin{array}{l} -\Delta_{\Gamma_0} \tilde{u} + \tilde{u} = \tilde{u}^2 \;\; \mbox{\rm in} \; \Gamma_0, \\
	\tilde{u} \;\;\mbox{\rm satisfies NK conditions on} \;\; V \backslash V_0, \\
	\tilde{u}(v_j) = p_j \geq 0, \quad v_j \in V_0,
	\end{array} \right.
\end{equation} 
	and 
\begin{equation}
\label{segment}
\left\{ \begin{array}{l} -\tilde{u}''(x) + \tilde{u}(x) = \tilde{u}(x)^2 \;\; \mbox{\rm in} \; P_0, \\
\tilde{u} \;\;\mbox{\rm satisfies Dirichlet condition on the boundary vertices}, \\
\tilde{u}(v_j) = p_j \geq 0, \quad v_j \in V_0.
\end{array} \right.
\end{equation}	
In what follows, we obtain the unique solutions on $\Gamma_0$ and on $P_0$ for any given $\vec{p} := \{p_j\}_{v_j \in V_0}$ 
defined in a ball of small radius $p_0$ such that $\| \vec{p} \| \leq p_0$. The elliptic equation (\ref{EL}) on the graph $\Gamma$ 
is satisfied if $\vec{p}$ is uniquely found from the NK conditions on $V_0$. 

By Theorem 2.9 in \cite{Berkolaiko} (modified from the cubic to quadratic nonlinearities), there exist $C_0 > 0$, $p_0 > 0$, $L_* > 0$ such that if $L_{\rm min} = \min_j L_j > L_*$, then for every $\vec{p}$ such that $\| \vec{p} \| \leq p_0$, there exists a unique solution $\tilde{u} \in D(\Delta_{\Gamma_0})$ of the boundary-value problem (\ref{bvp}) in the class of functions with $\tilde{u}(x) \in [0,1]$ for every $x \in \Gamma_0$. Moreover, the solution satisfies the estimates 
\begin{equation}
\label{bvp-1}
\| \tilde{u} \|_{L^{\infty}(\Gamma_0)} \leq C_0 \| \vec{p} \|
\end{equation}
and
\begin{equation}
\label{bvp-2}
|q_j - d_j p_j| \leq C_0 \left( \| \vec{p} \| e^{-L_{\rm min}} + \| \vec{p}\|^2 \right), \quad v_j \in V_0, 
\end{equation}
where $q_j$ is the Neumann data (the sum of outward derivatives from $\Gamma_0$) at the vertex $v_j$ and $d_j$ is the degree of the vertex $v_j \in V_0$. 

By Lemma \ref{lem-period-1}, there exists a unique solution $\tilde{u} \in C^{\infty}([0,L_j])$ of the boundary-value problem (\ref{segment}) 
for each $e_j \in P_0$ parameterized by $[0,L_j]$ with 
$$
(\tilde{u}(0),\tilde{u}'(0)) = (1,\tilde{q}_j) \quad {\rm and} \quad 
(\tilde{u}(L_j),\tilde{u}'(L_j)) = (p_j,q_j)
$$ 
in the class of functions monotonically decreasing on $[0,L_j]$, where $\tilde{q}_j$ is obtained from (\ref{energy-level}) and $q_j$ is obtained from $T(p_j,q_j) = L_j$, where $T(p,q)$ is defined in (\ref{period-function}). Moreover, it follows from (\ref{asymptotics-T}) that $q_j$ satisfies the expansion 
\begin{equation}
\label{segment-1}
q_j = p_j - 12 e^{-L_j-x_0} + \mathcal{O}(e^{-2L_j}),
\end{equation}
where $x_0 = 2 \arccosh\left(\frac{\sqrt{3}}{\sqrt{2}}\right)$ is not important in the limit of large $L_j$.

Assume that the vertex $v_j \in V_0$ has $m_j$ pendants in the set $P_0$ connected to it. It remains to satisfy the flux conditions at the vertices $v_j \in V_0$ from the sum of all outward derivatives from all $m_j$ pendants of $P_0$ and all $d_j$ edges of $V_0$ that connect to $v_j$. By using (\ref{bvp-2}) and (\ref{segment-1}), we obtain 
$$
d_j p_j + \mathcal{O}(\|\vec{p}\| e^{-L_{\rm min}} + \| \vec{p}\|^2) + 
m_j p_j - 12 \sum_{\ell_j \to v_j} e^{-L_j- x_0} + \mathcal{O}(e^{-2L_{\rm min}}) = 0,
$$ 
where the remainder terms are $C^1$ functions with respect to parameters $\vec{p}$ and the notation $\ell_j \to v_j$ only includes pendants of $P_0$ connected to the vertex $v_j$. By the implicit function theorem, there exists a unique solution in the small ball $\| \vec{p} \| \leq p_0$ for sufficiently small $p_0 > 0$ such that 
\begin{equation}
    \label{p-j-value}
p_j = \frac{12}{d_j + m_j} \sum_{\ell_j \to v_j} e^{-L_j- x_0} + \mathcal{O}(e^{-2L_{\rm min}}). 
\end{equation}
This proves the uniqueness of solutions in the class of functions with $u(x) \in [0,1]$ for $x \in \Gamma$. The bound (\ref{bound-on-u}) follows from 
the bound (\ref{bvp-1}). 
\end{proof}

\begin{remark}
    Depending on the value of $p_j$ in (\ref{p-j-value}), we can find $q_j$ from (\ref{segment-1}) either inside the homoclinic orbit if $q_j \in (-p_j,0)$ or outside the homoclinic orbit if $q_j \in (-\infty,-p_j)$.
\end{remark}

\section{The positive ground state in flower graphs}
\label{sec-5}

We construct the unique positive ground state in the particular case of a flower graph. We also illustrate the utility of the two period functions 
introduced in Section \ref{sec-4} for proving uniqueness of the positive ground state for $\lambda_0(\Gamma) \in (0,1)$.

A general flower graph is obtained for loops of different lengths. It generalizes particular examples of 
the interval graph, the tadpole graph, and the flower graph with loops of equal length, see Figure \ref{fig:flowers}. 
For transparency of computations, we consider examples of flower graphs in increasing order.

\subsection{The interval $[0,L]$} 
\label{sec-5-1}

We assume the Dirichlet condition at $0$ and the Neumann condition at $L$. Positive solutions can be parameterized by $p \in (0,1)$ 
in the period function $T(p,q)$ defined in (\ref{period-function}) with $q = 0$. Based on the monotonicity of $T(p,0)$ with respect to $p$ in Lemma \ref{lem-period-1}, we immediately get the following result. 

\begin{theorem}
	\label{theorem-example-1}
	Let $\Gamma = [0,L]$ with $u(0) = 0$ and $u'(L) = 0$. Then, the positive ground state exists for every $L \in (\frac{\pi}{2},\infty)$ and is unique. 
\end{theorem}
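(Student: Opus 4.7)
The plan is to reduce this boundary value problem to the scalar equation $T(p, 0) = L$ involving the period function from Section \ref{sec-4}, and then to read off existence and uniqueness of the positive ground state from the monotonicity and endpoint limits of $T(\cdot, 0)$.

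First I would substitute $u = 1 - \tilde{u}$ to pass to the phase-plane equation \eqref{ode-tilde} with boundary data $\tilde{u}(0) = 1$ and $\tilde{u}'(L) = 0$. A positive steady state corresponds to a trajectory in the $(\tilde{u}, \tilde{v})$-plane that starts at $(1, \tilde{q})$ for some $\tilde{q} < 0$ and ends at a turning point $(p, 0)$ with $p \in (0, 1)$. The a priori bound $0 \leq u \leq 1$ from Lemma \ref{lem-comparison} confines $\tilde{u}$ to $[0, 1]$, which restricts the trajectory to the monotone arc of the closed level curve around the center $(1, 0)$ lying in the strip $\tilde{u} \leq 1$. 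Orbits that would loop further around the center are excluded because they visit points with $\tilde{u} > 1$, i.e.\ $u < 0$. Hence positive steady states are in bijection with parameters $p \in (0, 1)$ satisfying
\begin{equation*}
T(p, 0) = L,
\end{equation*}
with $T(p, 0)$ obtained from \eqref{period-function} by setting $q = 0$.

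The remaining step is to analyze $T(\cdot, 0) : (0, 1) \to \mathbb{R}$. Lemma \ref{lem-period-1} yields $\partial_p T(p, 0) < 0$; its proof extends unchanged to $q = 0$ because $E(p, 0) + A(1) = A(1) - A(p) > 0$ on $(0, 1)$, so that \eqref{dTdp} still has a strictly negative right-hand side. Lemma \ref{lem-period-3} with $Q = 0$ gives $\lim_{p \to 1^-} T(p, 0) = \pi/2$, and Lemma \ref{lem-period-1-asymptotics} with $Q = 0$ gives $\lim_{p \to 0^+} T(p, 0) = +\infty$ through the leading term $-\ln(p/12)$. Thus $T(\cdot, 0)$ is a strictly decreasing homeomorphism from $(0, 1)$ onto $(\pi/2, \infty)$, so for every $L \in (\pi/2, \infty)$ there is exactly one $p_L$ with $T(p_L, 0) = L$, producing a unique positive steady state.

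To conclude that this steady state is the ground state, I would invoke Lemma \ref{lem-var-1}: since $\lambda_0(\Gamma) = (\pi/(2L))^2 < 1$ for $L > \pi/2$, a nontrivial minimizer of $H$ exists, and by Lemma \ref{lem-var-3} it is a strictly positive steady state, which the uniqueness just established identifies with the one parameterized by $p_L$. The main delicacy in this plan is the continuous extension of the results from Section \ref{sec-4} to the boundary case $q = 0$, which is routine: the singularity of \eqref{period-function} at $u = p$ is integrable and the derivative identities \eqref{dTdp} and \eqref{dTdq} survive the limit. Beyond this, the argument is a direct application of the period-function machinery to the simplest admissible graph geometry.
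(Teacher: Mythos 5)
Your proposal is correct and follows essentially the same route as the paper: both reduce the problem to the scalar equation $T(p,0)=L$ and conclude from strict monotonicity of $T(\cdot,0)$ on $(0,1)$ together with the endpoint limits $\lim_{p\to 0^+}T(p,0)=+\infty$ and $\lim_{p\to 1^-}T(p,0)=\tfrac{\pi}{2}$. The only cosmetic difference is that the paper computes the sign of $\partial_p T(p,0)$ and these two limits directly at $q=0$ (via the homoclinic limit and an explicit substitution), rather than citing Lemmas \ref{lem-period-1-asymptotics} and \ref{lem-period-3} at the boundary value $Q=0$ of their stated range, which is precisely the routine extension you flag.
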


\begin{proof}
	For $q = 0$, we have explicitly 
	\begin{align*}
\frac{\partial T}{\partial p} &= -\frac{p}{(1-p)(1+2p)} \int_{p}^1 \frac{1-u^2}{u^2 v} du.
\end{align*}
Hence the mapping $(0,1) \ni p \to T(p,0) \in \mathbb{R}$ is strictly monotonically decreasing. Furthermore, $\lim_{p \to 0} T(p,0) = \infty$ since the integral curve is a part of the homoclinic orbit as $E(p,0) \to 0^-$ and 
\begin{align*}
\lim_{p \to 1} T(p,0) &= \lim_{p \to 1} \int_p^1 \frac{du}{\sqrt{u^2 - p^2 + \frac{2}{3} (p^3-u^3)}} \\
&=\lim_{\tilde{p} \to 0} \int_0^{\tilde{p}} \frac{d\tilde{u}}{\sqrt{\tilde{p}^2 - \tilde{u}^2 + \frac{2}{3} (\tilde{u}^3-\tilde{p}^3)}} \\
&=\lim_{\tilde{p} \to 0} \int_0^1 \frac{dx}{\sqrt{1 - x^2 + \frac{2}{3} \tilde{p} (x^3-1)}} \\
&=\int_0^1 \frac{dx}{\sqrt{1-x^2}} = \frac{\pi}{2},
\end{align*}	
where we used the substitutions $\tilde{u} = 1 - u$, $\tilde{p} = 1 - p$, and $\tilde{u} = \tilde{p} x$, and Lebesgue's Dominated Convergence Theorem. 
Thus for every $L \in (\frac{\pi}{2},\infty)$, there is a unique $p \in (0,1)$ and a unique integral curve with $E(p,0) = E(u,v)$ for the positive ground state.
\end{proof}

\subsection{A symmetric flower graph} 
\label{sec-5-2}

A symmetric flower graph consists of the line segment $[0,L]$ with the Dirichlet condition at the boundary vertex at $x = 0$ 
and $N$ equal loops parameterized as $[-L_0,L_0]$ and connected at the interior vertex at $x = L$ with the NK conditions:
\begin{align}
\label{BC-flower}
\left\{ \begin{array}{l}
    \tilde{u}(L) = \tilde{u}_j(-L_0) = \tilde{u}_j(L_0), \quad 1\leq j \leq N, \\
    \tilde{u}'(L) + \sum_{j=1}^N \tilde{u}_j'(L_0) - \tilde{u}_j'(-L_0) = 0,
    \end{array} \right.
\end{align}
where $\tilde{u}(x) : [0,L] \to \mathbb{R}$ and $\tilde{u}_j(x) : [-L_0,L_0] \to \mathbb{R}$, $1 \leq j \leq N$ are the components on the line segment and on the $N$ equal loops, respectively. Each component satisfies the second-order equation (\ref{ode-tilde}) on each edge of the flower graph. 
%If $N = 1$, we refer to this graph as the tadpole graph.

Let $(p,q) \in (0,1) \times (-\infty,0)$ be parameters of the point on the phase plane for the boundary conditions $p = \tilde{u}(L)$ and $q = \tilde{u}'(L)$ defined from $T(p,q) = L$, where $T(p,q)$ is the first period function introduced in (\ref{period-function}). The strictly positive ground state is necessarily described by the identical and even functions $\tilde{u}_j = \tilde{u}_0$, where $\tilde{u}_0(x) : [-L_0,L_0] \to \mathbb{R}$ satisfies the boundary conditions $\tilde{u}_0(\pm L_0) = p$ and $\tilde{u}'_0(\pm L_0) = \mp \frac{q}{2N}$ due to the NK conditions (\ref{BC-flower}). The second period function introduced in (\ref{period-function-second}) gives the second condition $T_0(p,\frac{q}{2N}) = L_0$ for the existence of the strictly positive ground state on the flower graph. Combining together, the existence of the strictly positive ground state is equivalent to finding a root $(p,q) \in (0,1) \times (-\infty,0)$ of the system of two equations: 
\begin{equation}
    \label{period-equation}
T(p,q) = L, \qquad T_0\left(p,\frac{q}{2N} \right) = L_0. 
\end{equation}

The following theorem employs monotonicity of the two period functions (\ref{period-function}) and (\ref{period-function-second}) in Lemmas 
\ref{lem-period-1} and \ref{lem-period-2}, respectively, in order to prove uniqueness of the positive ground state.

\begin{theorem}
	\label{theorem-example-2}
	Let $\Gamma = [0,L] \times \underbrace{[-L_0,L_0] \times \dots \times [-L_0,L_0]}_{N \; \mathrm{times}}$ be the symmetric flower graph with the Dirichlet condition at the boundary vertex and the NK conditions (\ref{BC-flower}) at the interior vertex. Then, there exists a simply connected region $\Omega \in \mathbb{R}^+ \times \mathbb{R}^+$ such that the positive ground state exists for every $(L,L_0) \in \Omega$ and is unique. 
\end{theorem}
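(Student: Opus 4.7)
The plan is to translate the statement into a study of the mapping
\[
F: D \to \mathbb{R}^+ \times \mathbb{R}^+, \qquad F(p,q) := \left( T(p,q),\; T_0\bigl(p, \tfrac{q}{2N}\bigr) \right),
\]
on the simply connected open domain $D := (0, \tfrac{1}{2}) \times (-\infty, 0)$, and to set $\Omega := F(D)$. By the reduction (\ref{period-equation}) preceding the statement, every root $(p,q) \in D$ of the system corresponds to a strictly positive symmetric steady state with boundary data $p = \tilde{u}(L)$, $q = \tilde{u}'(L)$, and that steady state is the ground state by the variational characterization of Section \ref{sec-min}. Thus, establishing that $F$ is a global diffeomorphism from $D$ onto $\Omega$ will yield existence, uniqueness, and simple connectedness simultaneously.

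The first step is local invertibility. The Jacobian determinant is
\[
\det DF \;=\; \frac{1}{2N}\, \partial_p T \cdot \partial_q T_0 \;-\; \partial_q T \cdot \partial_p T_0.
\]
Lemma \ref{lem-period-1} supplies $\partial_p T < 0$ and $\partial_q T > 0$, while Lemma \ref{lem-period-2} supplies $\partial_q T_0 < 0$ and, critically, $\partial_p T_0 < 0$ on the restricted slab $p \leq \tfrac{1}{2}$. Both summands in $\det DF$ are therefore strictly positive on $D$, so $F$ is a local diffeomorphism and $\Omega$ is an open subset of $\mathbb{R}^+ \times \mathbb{R}^+$.

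The second step is global injectivity on $D$. Fixing $L > 0$, the implicit function theorem presents $\{T = L\} \cap D$ as the graph $q = q_1(p)$ of a function with $q_1'(p) = -\partial_p T / \partial_q T > 0$, while fixing $L_0 > 0$, the level set $\{T_0(p, q/(2N)) = L_0\} \cap D$ is a graph $q = q_2(p)$ with $q_2'(p) = -2N\,\partial_p T_0 / \partial_q T_0 < 0$. Two strictly monotone graphs with opposite monotonicities intersect in at most one point, so $F^{-1}(\{(L,L_0)\})\cap D$ has at most one element. Combined with local invertibility, $F : D \to \Omega$ is a diffeomorphism, and $\Omega$ is simply connected because $D$ is.

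The step I expect to be most delicate is verifying that $\Omega$ is non-empty and locating it in the $(L,L_0)$-plane; in particular, one wants to connect the region to the existence regime identified by Corollary \ref{cor-var-2}. For this I would trace $\partial \Omega$ using the asymptotic results already proved: the limit $p \to 1^-$ along rays $q = Q(1-p)$ with $Q \in (-\infty,0)$ produces, via Lemma \ref{lem-period-3}, explicit limiting values of both $T(p,q)$ and $T_0(p,q/(2N))$ (the second coordinate moving along the ray $q/(2N) = (Q/(2N))(1-p)$), so the closure of $\Omega$ meets the corner where $L \to \pi/2$ and $L_0 \to 0$; and the limit $p \to 0^+$ along rays $q = Qp$ gives $T \to \infty$ via Lemma \ref{lem-period-1-asymptotics}, showing that $\Omega$ extends unboundedly in its first coordinate. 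A secondary subtlety is that the injectivity established above only excludes other solutions lying inside $D$; ruling out a second symmetric positive steady state with $p \in [\tfrac{1}{2},1)$ for $(L,L_0) \in \Omega$ would, if needed, require either an energy comparison with the minimizer in $D$ or a refinement of Lemma \ref{lem-period-2} extending $\partial_p T_0 < 0$ past $p = \tfrac{1}{2}$.
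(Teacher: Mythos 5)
Your overall strategy---setting up the map $(p,q)\mapsto \bigl(T(p,q),T_0(p,\tfrac{q}{2N})\bigr)$, establishing a sign for its Jacobian, and upgrading to global injectivity via the oppositely monotone level curves of the two period functions---is the same as the paper's, and your level-curve argument is a welcome elaboration of the paper's brief appeal to the inverse function theorem. However, there is a genuine gap, and it is precisely the point you set aside at the end as a ``secondary subtlety.'' By restricting to $D=(0,\tfrac12)\times(-\infty,0)$ so that you can invoke $\partial_p T_0<0$ from Lemma \ref{lem-period-2}, you lose the uniqueness claim: for $(L,L_0)\in F(D)$ nothing in your argument excludes a second strictly positive symmetric steady state whose trace at the interior vertex has $p\in[\tfrac12,1)$. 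Since Lemma \ref{lem-var-1} gives only existence of a minimizer, uniqueness of the positive steady state is exactly the content of the theorem, so this is not a side issue but the central difficulty. (A symptom of the same problem: your boundary analysis of $\Omega$ invokes the limit $p\to 1^-$ from Lemma \ref{lem-period-3}, which lies outside your domain $D$.)

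The paper resolves this by never using the sign of $\partial_p T_0$ in isolation. Multiplying the Jacobian by the positive prefactors $E(p,q)+A(1)$ and $E(p,\tfrac{q}{2N})+A(1)$ and substituting the four expressions (\ref{dTdp})--(\ref{dT0dq}), the problematic term $-\tfrac{q}{2N}$ coming from $\partial_p T_0$ is absorbed by means of the identity
\[
E\!\left(p,\tfrac{q}{2N}\right)+A(1)-\left(1-\tfrac{1}{4N^2}\right)\tfrac{(1-p)^2(1+2p)}{3}=\tfrac{1}{4N^2}\bigl[E(p,q)+A(1)\bigr],
\]
after which the rescaled determinant $\Delta(p,q)$ is exhibited as a sum of manifestly positive terms, one of them proportional to $q\mathcal{I}_1+\tfrac{(1-p)(1+2p)}{3p}=[E(p,q)+A(1)]\,\partial_q T>0$. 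This yields a nonvanishing Jacobian on the whole strip $(0,1)\times(-\infty,0)$ and hence uniqueness among all admissible $(p,q)$. To repair your proof you would need either to reproduce this determinant-level cancellation or to extend $\partial_p T_0<0$ past $p=\tfrac12$; the latter is not available, since the integrand $1-2u$ in the paper's formula for $\partial_p T_0$ genuinely changes sign on $[p_0,p]$ once $p>\tfrac12$, so the cancellation in the full $2\times 2$ determinant is the realistic route.
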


\begin{proof}
We need to compute a solution $(p,q) \in (0,1) \times (-\infty,0)$ of the two nonlinear equations 
(\ref{period-equation}) for a given point $(L,L_0) \in \mathbb{R}^+ \times \mathbb{R}^+$. To show 
invertibility of the transformation $(p,q) \to (L,L_0)$, we compute its Jacobian from the derivatives of the period functions 
$T(p,q)$ and $T_0(p,\frac{q}{2N})$ with respect to $(p,q)$. It follows from  
(\ref{dTdp}), (\ref{dTdq}), (\ref{dT0dp}), and (\ref{dT0dq}) that 
\begin{align*}
[E(p,q) + A(1)] \frac{\partial T}{\partial p} &= -p(1-p) \mathcal{I}_1 + q, \\
[E(p,q) + A(1)] \frac{\partial T}{\partial q} &= q \mathcal{I}_1 + \frac{(1-p)(1+2p)}{3p}, \\
[E\left( p,\frac{q}{2N} \right) + A(1)] \frac{\partial T_0}{\partial p} &= -p(1-p) \mathcal{I}_2 - \frac{q}{2N}, \\
[E\left( p,\frac{q}{2N} \right) + A(1)] \frac{\partial T_0}{\partial q} &= \frac{q}{4 N^2} \mathcal{I}_2 - \frac{(1-p)(1+2p)}{6N p}.
\end{align*}
where 
\begin{equation}
    \label{integrals}
\mathcal{I}_1 := \int_{p}^1 \frac{1-u^2}{3u^2 \sqrt{E(p,q) + A(u)}} du, \quad 
\mathcal{I}_2 := \int_{p_0}^p \frac{1-u^2}{3u^2 \sqrt{E(p,\frac{q}{2N}) + A(u)}} du. 
\end{equation}
We obtain from here that 
\begin{align*}
 \Delta(p,q) &:= [E(p,q) + A(1)]  [E\left( p,\frac{q}{2N} \right) + A(1)] \left(  \frac{\partial T}{\partial p} \frac{\partial T_0}{\partial q}  - \frac{\partial T}{\partial q}  \frac{\partial T_0}{\partial p} \right) \\
 &= \left( 1 - \frac{1}{4N^2} \right) q p (1-p) \mathcal{I}_1 \mathcal{I}_2 + \frac{1}{2N} \mathcal{I}_1 [E(p,q) + A(1)] 
 + \mathcal{I}_2 [E\left(p,\frac{q}{2N} \right) + A(1)] \\
 &= \left( 1 - \frac{1}{4N^2} \right)  p(1-p) \left[ q \mathcal{I}_1 + \frac{(1-p) (1+2p)}{3p} \right] \mathcal{I}_2 
 + \frac{1}{4N^2} (2N \mathcal{I}_1 + \mathcal{I}_2) [E(p,q) + A(1)],
\end{align*}
where we have used that 
\begin{align*}
E\left(p,\frac{q}{2N} \right) + A(1) - \left(1 - \frac{1}{4N^2} \right) \frac{(1-p)^2 (1+2p)}{3} = \frac{1}{4N^2} [E(p,q) + A(1)].
\end{align*}
Every term in the final expression for $\Delta(p,q)$ is positive 
since $\frac{\partial T}{\partial q} > 0$ in (\ref{dTdq}) 
and $E(p,q) + A(1) > 0$. Thus, the Jacobian is positive for every $(p,q) \in (0,1) \times (-\infty,0)$ so 
that, by the inverse function theorem, there exists a unique solution for $(p,q) \in (0,1) \times (-\infty,0)$ 
for every $(L,L_0)$ from a suitable existence region $\Omega \in \mathbb{R}^+ \times \mathbb{R}^+$. The region $\Omega$ is a simply connected range of the transformation $(p,q) \to (L,L_0)$.
\end{proof}

\begin{remark}
By Theorem \ref{theorem-localized-state}, there exists a unique positive ground state for any given (sufficiently large) $L$ and $L_0$. Hence, the existence region $\Omega$ is bounded from below near $(0,0)$ but extends along every ray for $(L,L_0)$ in $\mathbb{R}^+ \times \mathbb{R}^+$. Moreover, it follows from (\ref{segment}) and (\ref{p-j-value}) that 
$$
p = \frac{12}{1+2N} e^{-L-x_0} + \mathcal{O}(e^{-2L_{\rm min}}) \quad \mbox{\rm and} \quad 
q = -\frac{24 N}{1+2N} e^{-L-x_0} + \mathcal{O}(e^{-2L_{\rm min}}),
$$
in the limit of large $L_{\min} = \min(L,L_0)$. Since $q \in (-\infty,-p)$, the part of the solution $\tilde{u}(x) : [0,L] \to \mathbb{R}$ in the pendant with the Dirichlet condition is defined outside the homoclinic orbit on the phase plane for every $N \geq 1$ if $L_{\rm min}$ is sufficiently large.
\label{rem-outside}
\end{remark}

\begin{remark}
    If either $L \to 0$ or $L_0 \to 0$, the existence of the positive ground state on the flower graph reduces to the existence problem on the segments $[0,L]$ or $[0,L_0]$ with one Dirichlet and one Neumann boundary condition. Indeed, if $L_0 = 0$, then $q = 0$ and the system (\ref{period-equation}) reduces to the only equation $T(p,0) = L$ and if $L = 0$, then $p = 1$ and the system (\ref{period-equation}) reduces to the only equation 
    $$
    T_0\left(1,\frac{q}{2N}\right) = T(p_0(q),0) = L_0, 
    $$
where $p_0 = p_0(q) \in (0,1)$ is uniquely obtained from solutions of the cubic equation 
$$
\frac{q^2}{4N^2} - \frac{1}{3} = -p_0^2 + \frac{2}{3} p_0^3.
$$   
By Theorem \ref{theorem-example-1}, the existence region is bounded for $L > \frac{\pi}{2}$ if $L_0 = 0$ and for $L_0 > \frac{\pi}{2}$ if $L = 0$. The lower boundary of $\Omega$ is a curve in the $(L,L_0)$ plane which connects the points $(\frac{\pi}{2},0)$ and $(0,\frac{\pi}{2})$ such that the positive ground state exists if the point $(L,L_0)$ is located in $\Omega$ above the curve.
    \label{rem-lower}
\end{remark}

To get the lower boundary of the existence region $\Omega$ described in Remark \ref{rem-lower}, we need to use the asymptotic expressions for 
$T(p,q)$ and $T_0(p,q)$ as $(p,q) \to (1,0)$ in Lemma \ref{lem-period-3}. Indeed, by Theorem \ref{theorem-example-2}, the mapping $(p,q) \to (L,L_0)$ is invertible, hence the boundaries of $\Omega$ in the $(L,L_0)$ plane correspond to the boundaries of the vertical semi-strip $(0,1) \times (-\infty,0)$ in the $(p,q)$ plane. By Remark \ref{rem-lower}, the top boundary at $p \in (0,1)$, $q = 0$ is mapped to the boundary of $\Omega$ for $L \in \left(\frac{\pi}{2},\infty \right)$, $L_0 = 0$ and the right boundary at $p = 1$, $q \in (-\infty,0)$ is mapped to the boundary of $\Omega$ for $L = 0$, $L_0 \in \left(\frac{\pi}{2}, \infty \right)$. The lower boundary of $\Omega$ corresponds to the singular behavior of the two period functions at $(p,q) = (1,0)$, that is, at the center point of the second-order equation (\ref{ode-tilde}). Figure \ref{fig:existence-region} illustrates the region $(0,1) \times (-\infty,0)$ in the $(p,q)$ plane (left panel) and 
the region $\Omega$ in the $(L_0,L)$ plane with the lower boundary of $\Omega$ for $N = 1$ and $N = 5$ (right panel).

\begin{figure}[htb!]
    \centering
    \includegraphics[width=6cm,height=4cm]{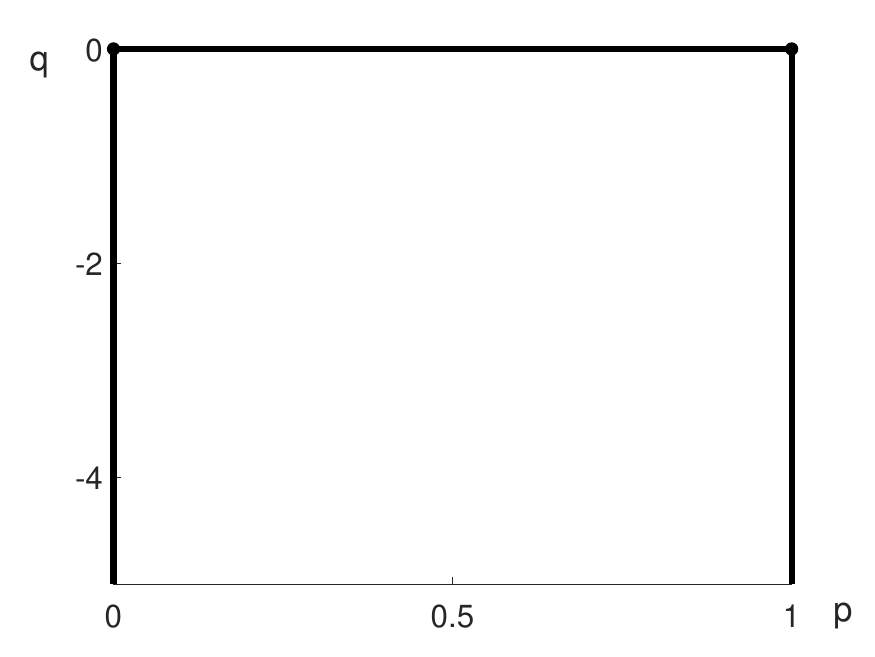}
    \includegraphics[width=6cm,height=4cm]{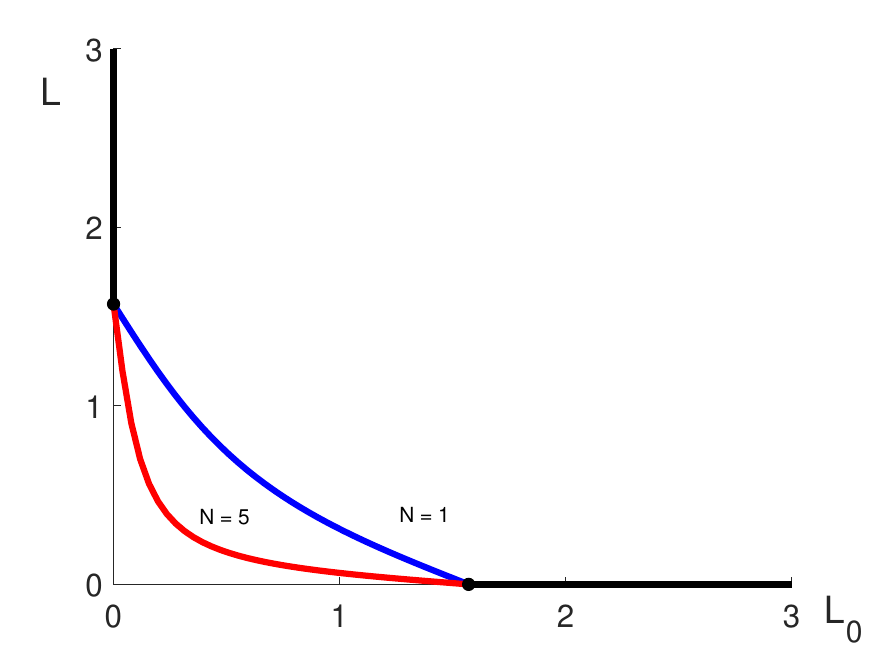}
    \caption{The existence region on the $(p,q)$ plane (left) and on the $(L_0,L)$ plane (right). The lower boundary of $\Omega$ is shown for  $N=1$ and $N=5$. As $N$ is increased, the curve approaches the axes.}
    \label{fig:existence-region}
\end{figure}

The following proposition gives the lower boundary of $\Omega$ based on Lemma \ref{lem-period-3}. 

\begin{proposition}
    \label{lem-lower-bound}
    The lower boundary of the existence region $\Omega \subset \mathbb{R}^+ \times \mathbb{R}^+$ is given by 
    \begin{equation}
        \label{lower-bound}
        L_0 = \arctan \frac{\cot(L)}{2N},
    \end{equation}
and it connects $L_0 = 0$ for $L = \frac{\pi}{2}$ and $L_0 = \frac{\pi}{2}$ if $L = 0$.
\end{proposition}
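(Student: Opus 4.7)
The plan is to identify the lower boundary of $\Omega$ as the image, under the $C^1$-diffeomorphism $\Phi: (p,q) \mapsto (T(p,q), T_0(p, q/(2N)))$ established in Theorem \ref{theorem-example-2}, of the corner $(p,q) \to (1,0)$ of the parameter strip $(0,1) \times (-\infty, 0)$, and to parameterize this corner image by the direction of approach using the asymptotics from Lemma \ref{lem-period-3}.

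First I would analyze approaches to $(1,0)$ along the one-parameter family of rays $q = Q(1-p)$ with $Q \in (-\infty, 0)$ fixed. Writing $q/(2N) = Q'(1-p)$ with $Q' = Q/(2N)$ and applying Lemma \ref{lem-period-3} to each period function,
\begin{align*}
L = T(p, q) &\to \arcsin \frac{1}{\sqrt{1 + Q^2}}, \\
L_0 = T_0\!\left(p, \tfrac{q}{2N}\right) &\to \frac{\pi}{2} - \arcsin \frac{1}{\sqrt{1 + Q^2/(4N^2)}},
\end{align*}
as $p \to 1^-$. From the first equation, since $L \in (0, \pi/2)$ and $Q < 0$, the identity $\sin L = 1/\sqrt{1+Q^2}$ together with $\cos L > 0$ yields $\cot L = -Q$, i.e.\ $Q = -\cot L$. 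Substituting into the second equation, a short trigonometric computation (set $\alpha := \arcsin(1/\sqrt{1+\cot^2(L)/(4N^2)})$ and read off $\tan \alpha = 2N \tan L$) gives $L_0 = \pi/2 - \arctan(2N \tan L) = \arctan(\cot(L)/(2N))$, which is the claimed formula (\ref{lower-bound}). Sending $Q \to 0^-$ produces $(L, L_0) \to (\pi/2, 0)$, and sending $Q \to -\infty$ produces $(L, L_0) \to (0, \pi/2)$, matching the endpoint assertions.

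Next I would confirm that this curve really is the lower boundary of $\Omega$. Since $\Phi$ has positive Jacobian on the open strip $(0,1) \times (-\infty, 0)$ by Theorem \ref{theorem-example-2}, the topological boundary $\partial \Omega$ is traced by the boundary of the strip in the limiting sense. By Remark \ref{rem-lower}, the edge $\{q = 0\} \times (0,1)$ maps to the horizontal axis segment $L_0 = 0$, $L \in (\pi/2, \infty)$, and the edge $\{p = 1\} \times (-\infty, 0)$ maps to the vertical axis segment $L = 0$, $L_0 \in (\pi/2, \infty)$. The remaining pieces (the left edge $p \to 0^+$ and the bottom $q \to -\infty$) escape to infinity in $(L, L_0)$ because $T(p,q) \to \infty$ in those limits, following from the fact that the integrand in (\ref{period-function}) develops a non-integrable singularity at $u=0$ or the orbit is unbounded. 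Therefore the only finite curve in $\partial \Omega$ joining the two axis segments must be the image of the corner $(1,0)$ computed above, which is necessarily the lower boundary of $\Omega$.

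The main obstacle is that Lemma \ref{lem-period-3} supplies an $\mathcal{O}(|1-p|)$ remainder for $T$ but only a bare limit for $T_0$. To conclude that $Q \mapsto (L(Q), L_0(Q))$ traces a genuine continuous curve in $\partial\Omega$, I need the convergence to be uniform on compact subsets of $Q \in (-\infty, 0)$. This follows by revisiting the Dominated Convergence argument in the proof of Lemma \ref{lem-period-3} and observing that the integrand bounds can be chosen uniformly in $Q$ on such compact sets, so no new asymptotic machinery is required beyond that already developed in Section \ref{sec-4}.
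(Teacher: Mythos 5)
Your proposal is correct and follows essentially the same route as the paper: both identify the lower boundary with the image of the corner $(p,q)\to(1,0)$ approached along rays $q=Q(1-p)$, use the asymptotics of Lemma \ref{lem-period-3} to read off $Q=-\cot L$ from the $T$-equation, and substitute into the limit of $T_0$ to obtain (\ref{lower-bound}). Your added remarks on why the corner image is the lower boundary and on uniformity in $Q$ make explicit what the paper leaves implicit, but the core argument is identical.
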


\begin{proof}
    We can solve the first equation of the system (\ref{period-equation}) for $q = q_L(p)$ with fixed $L > 0$ 
    thanks to the monotonicity of the period function $T(p,q)$ in Lemma \ref{lem-period-1}, where $q_L(p) \in (-\infty,0)$ is a unique solution of 
    $$
    T(p,q_L(p)) = L.
    $$
    Substituting the result into the second equation of the system (\ref{period-equation}) yields 
$$
T_0\left(p, \frac{q_L(p)}{2N} \right) = L_0,
$$
which can be uniquely solved for $(L,L_0) \in \Omega$ by Theorem \ref{theorem-example-2}. 
The lower bound of the existence region $\Omega$ is defined by 
\begin{equation}
\label{lower-bound-expression}
L_0 = \min_{p \in [0,1]} T_0\left(p, \frac{q_L(p)}{2N} \right) = \lim_{p \to 1^-} T_0\left(p, \frac{q_L(p)}{2N} \right),
\end{equation}
where we have used the argument that the lower bound corresponds to the corner point $(p,q) = (1,0)$ in the boundary of the vertical semi-strip $(0,1) \times (-\infty,0)$ in the $(p,q)$ plane mapped into the $(L,L_0)$-plane, see Figure \ref{fig:existence-region}. By the asymptotic expression (\ref{center-point-asymptotics}), we compute 
the function $q_L(p)$ as $p \to 1^-$ from 
$$
L = \arcsin \frac{1-p}{\sqrt{(1-p)^2+q_L(p)^2}} + \mathcal{O}(|1-p|).
$$
This yields 
$$
\frac{1-p}{\sqrt{(1-p)^2+q_L(p)^2}} = \sin\left( L + \mathcal{O}(|1-p|) \right) \; \Rightarrow \; 
q_L(p) = -\cot(L) (1-p) + \mathcal{O}((1-p)^2),
$$
where $L \in \left(0,\frac{\pi}{2}\right)$ due to restriction $q_L(p) = Q (1-p)$ with $Q \in (-\infty,0)$. By using the limit (\ref{center-point-limit}) in (\ref{lower-bound-expression}), we obtain 
\begin{align*}
L_0 &= \frac{\pi}{2} - \lim_{p \to 1^-} \arcsin \frac{1-p}{\sqrt{(1-p)^2+\frac{q_L(p)^2}{4N^2}}} \\
&= \frac{\pi}{2} - \arcsin \frac{1}{\sqrt{1+\frac{\cot(L)^2}{4N^2}}},
\end{align*}
which yields (\ref{lower-bound}) by using elementary trigonometric identities.
\end{proof}

\begin{remark}
\label{rem-boundary}
By Lemma \ref{lem-var-1}, the lower boundary of $\Omega$ can be found from the threshold condition $\lambda_0(\Gamma) = 1$, 
where $\lambda_0(\Gamma)$ is the lowest eigenvalue of $-\Delta_{\Gamma}$ in $L^2(\Gamma)$. 
We can show that this definition coincides with the expression (\ref{lower-bound}) in Proposition \ref{lem-lower-bound}. 
The solution of $-\Delta_{\Gamma} \Psi = \lambda_0(\Gamma) \Psi$ is given pointwisely as 
    $$
    \psi(x) = \sin(\sqrt{\lambda_0(\Gamma)} x), \quad x \in (0,L)
    $$
    to satisfy the Dirichlet condition at $x = 0$ and 
$$
\psi_0(x) = \frac{\sin(\sqrt{\lambda_0(\Gamma)} L)}{\cos(\sqrt{\lambda_0(\Gamma)} L_0)} \cos(\sqrt{\lambda_0(\Gamma)} x), \quad x \in (-L_0,L_0),
$$
to satisfy the symmetry condition in the loops and the first NK condition in (\ref{BC-flower}). The second NK condition in 
(\ref{BC-flower}) yields 
\begin{equation}
    \label{transc-eq}
\tan(\sqrt{\lambda_0(\Gamma)} L_0) = \frac{1}{2N} \cot(\sqrt{\lambda_0(\Gamma)} L),
\end{equation}
which admits a unique root for $\lambda_0(\Gamma)$ in $\left( 0,\min\{ \frac{\pi^2}{4 L_0^2},\frac{\pi^2}{4 L^2}\} \right)$. It is easy to show that $\lambda_0(\Gamma)$ is monotonically decreasing with either $L$ or $L_0$, in agreement with Lemma \ref{lem-linear-1}. Comparing  (\ref{lower-bound}) 
and (\ref{transc-eq}) implies that $\lambda_0(\Gamma) = 1$ at the lower boundary of $\Omega$. 
\end{remark}

\begin{remark}
It follows from (\ref{lower-bound}) that $L_0$ decreases as $N$ increases for fixed $L$ and $L$ decreases as $N$ increases for fixed $L_0$. Hence, the lower boundary of the existence region $\Omega$ approaches the axis in the $(L,L_0)$  plane as $N$ increases. Figure \ref{fig:existence-region} (right) illustrates this phenomenon for $N = 1$ and $N = 5$. This implies that the positive ground state exists in a large region $\Omega$ for the flower graph with more loops. 
\end{remark}

Figure \ref{fig:n1example} illustrates the construction of the positive ground state on the tadpole graph with $N = 1$ by using parts of two integral 
curves of the second-order equation (\ref{ode-tilde}). 

\begin{figure}[htb!]
    \centering
    \includegraphics[width=5cm,height=4cm]{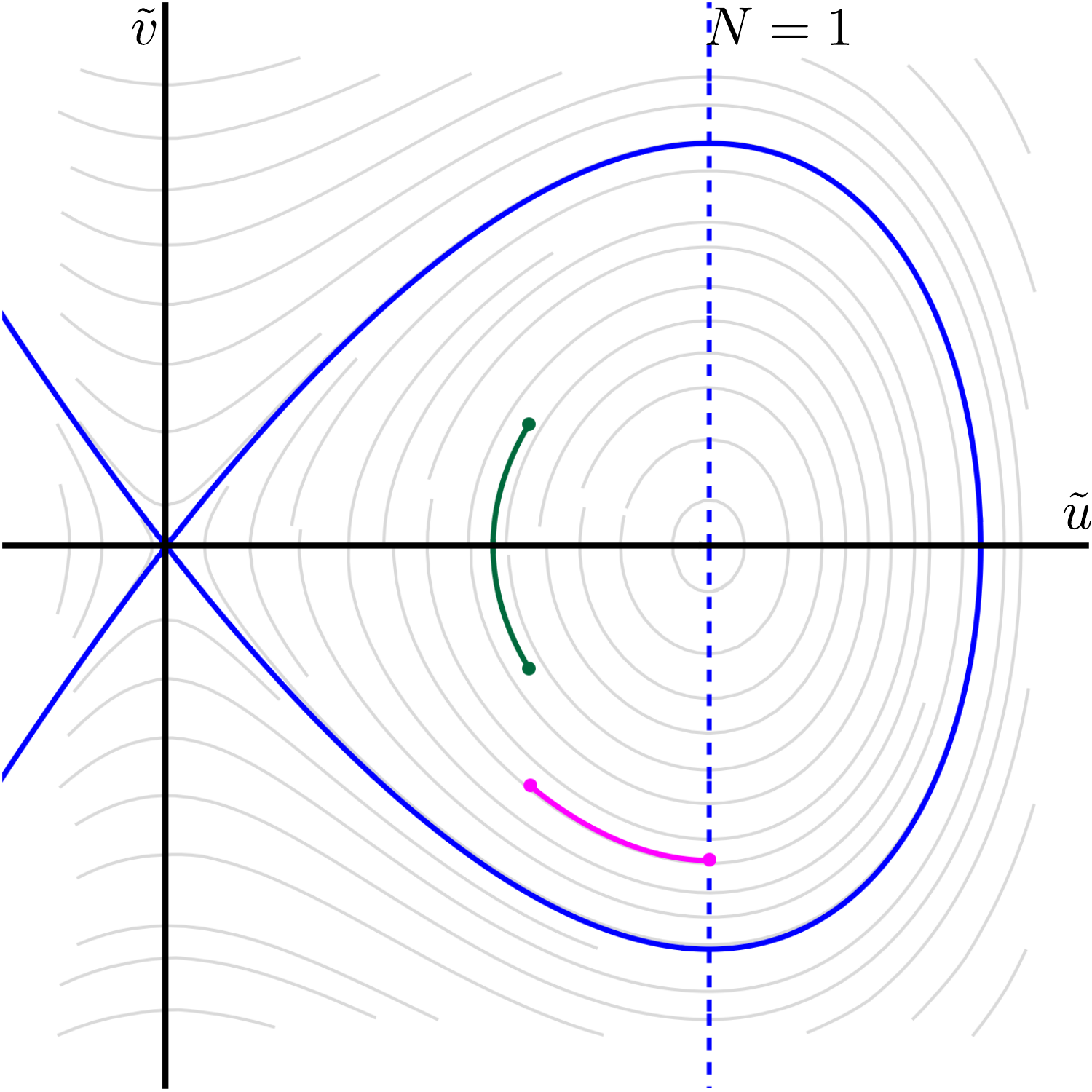}
    \includegraphics[width=5cm,height=4cm]{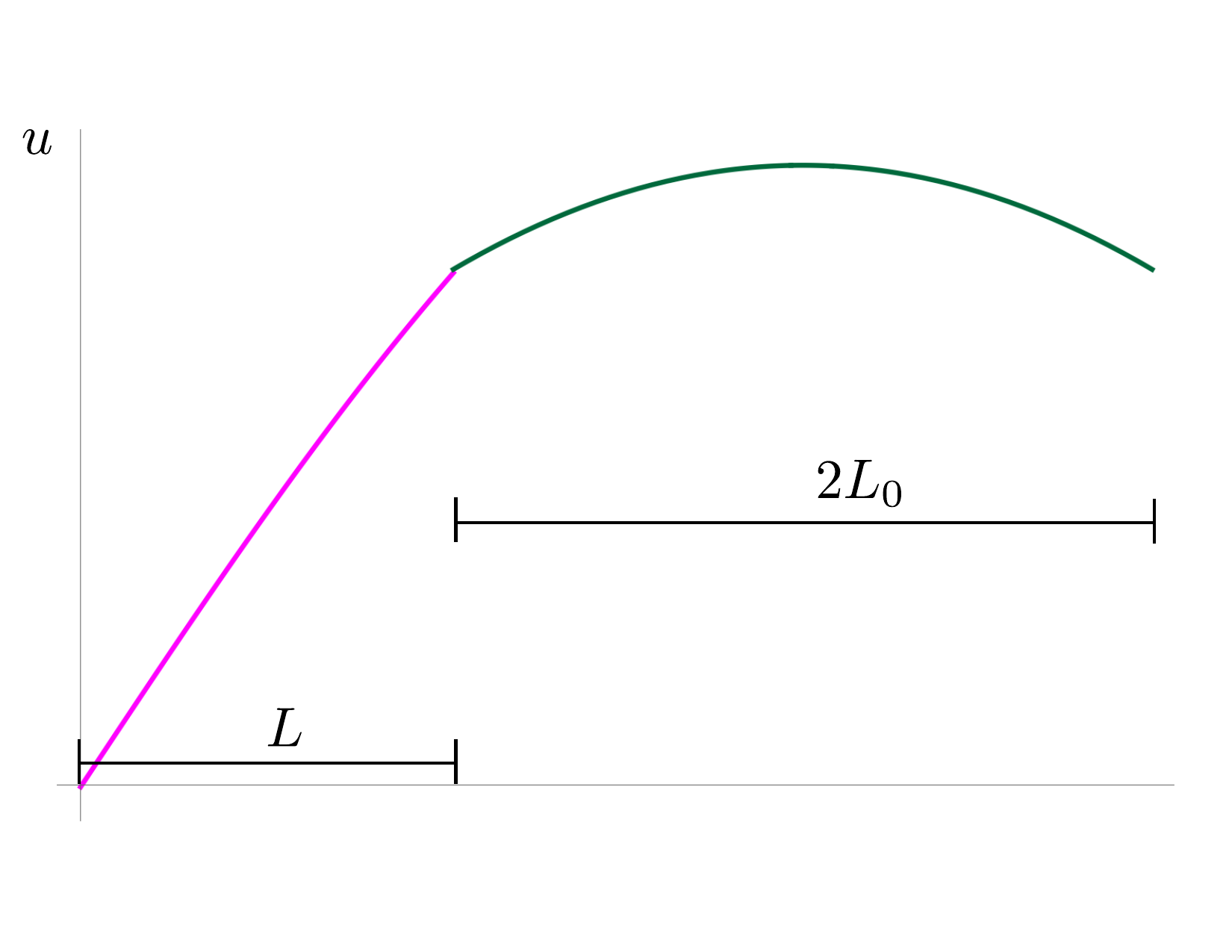}
    \includegraphics[width=5cm,height=4cm]{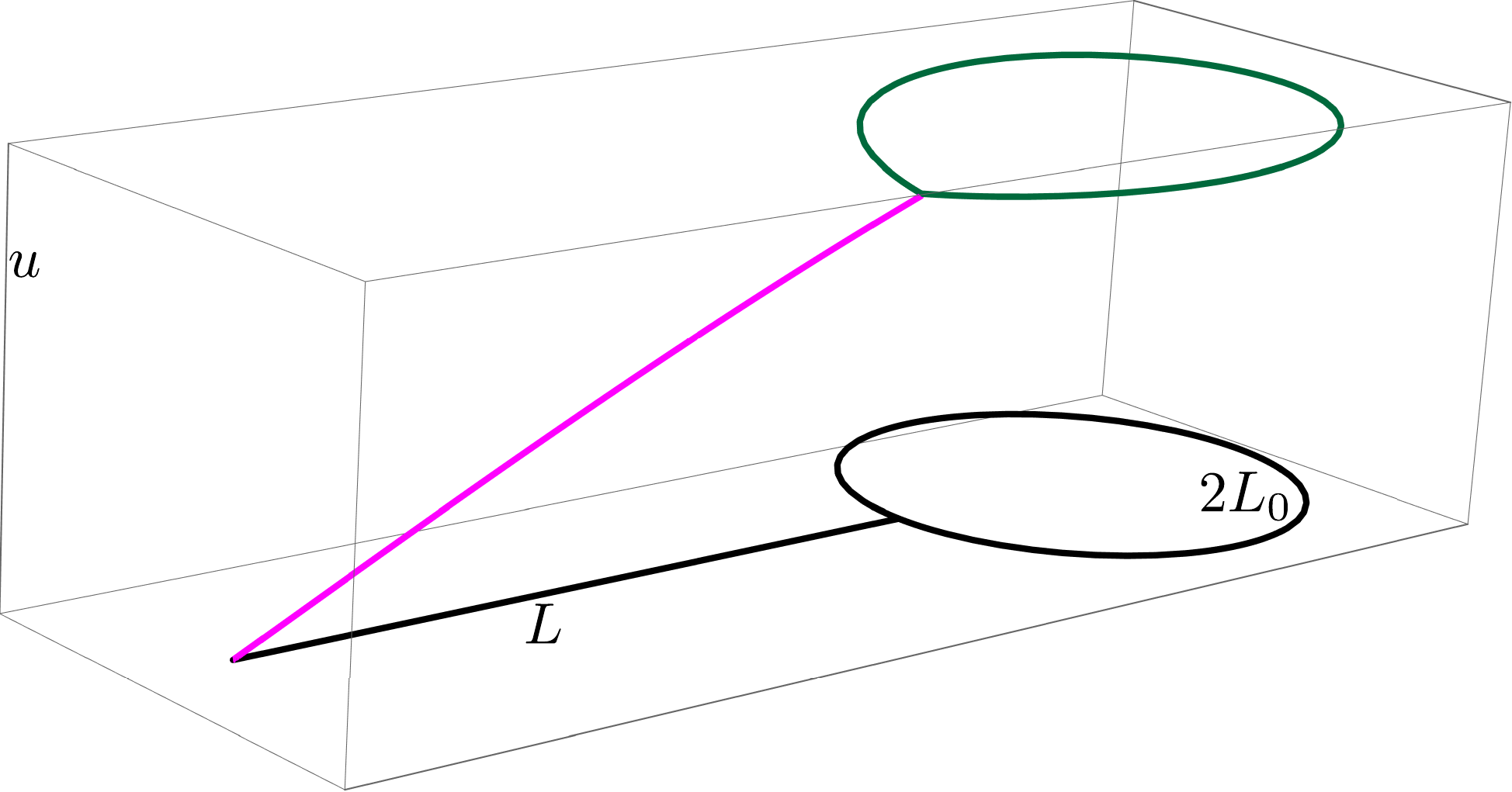}
    \caption{The positive ground state on a tadpole graph with a stem of length $L = 0.8$ and a loop of half-length $L_0 = 0.75$. Left: parts of two integral curves on the phase plane $(\tilde{u},\tilde{v})$. Center: a plot in variables $(x,u(x))$ side by side. Right: a (3D)-plot showing the solution on the tadpole graph.}
    \label{fig:n1example}
\end{figure}

\subsection{A general flower graph}

For a general flower graph with loops of different half-lengths $\{ L_j \}_{j = 1}^N$, we introduce the following  parameters 
$$
p = \tilde{u}(L) = \tilde{u}_j(-L_j) = \tilde{u}_j(L_j), \quad 1 \leq j \leq N
$$
and 
$$
q_j = \tilde{u}'_j(-L_j) = -\tilde{u}_j'(L_j), \quad 1 \leq j \leq N.
$$
The NK conditions are satisfied if 
$$
q = \tilde{u}'(L) = 2 \sum_{j=1}^N q_j.
$$
The solution is defined by roots of the system of $(N+1)$ equations:
\begin{equation}
\label{period-system}
T\left(p,2 \sum_{j=1}^N q_j \right) = L, \quad T_0(p,q_j) = L_j, \quad 1 \leq j \leq N,
\end{equation}
where $T$ and $T_0$ are the two period functions introduced in (\ref{period-function}) and (\ref{period-function-second}), respectively.

We intend to prove that the mapping $(p,q_1,\dots,q_N) \to (L,L_1,\dots,L_N)$ is invertible so that 
for every given point $(L,L_1,\dots,L_N)$ in an open region $\Omega \subset \mathbb{R}^{N+1}_+$, there exists a unique 
solution for $(p,q_1,\dots,q_N) \in (0,1) \times (-\infty,0) \times \dots \times (-\infty,0)$. This gives the uniquely specified 
positive ground state in the existence region $\Omega$ defined by the invertible mapping 
$$
(0,1) \times (-\infty,0) \times \dots \times (-\infty,0) \ni (p,q_1,\dots,q_N) \to (L,L_1,\dots,L_N) \in \Omega \subset \mathbb{R}^{N+1}_+.
$$
The following theorem gives the result based on the monotonicity of the two period functions (\ref{period-function}) and (\ref{period-function-second}) in Lemmas 
\ref{lem-period-1} and \ref{lem-period-2}, respectively, 

\begin{theorem}
	\label{theorem-example-3}
	Let $\Gamma = [0,L] \times [-L_1,L_1] \times \dots \times [-L_N,L_N]$ be the flower graph with the Dirichlet condition at $x = 0$ and the NK condition at the interior vertex. Then, there is a simply connected region $\Omega \in \mathbb{R}^{N+1}_+$ such that the positive ground state exists for every $(L,L_1,\dots,L_N) \in \Omega$ and is unique. 
\end{theorem}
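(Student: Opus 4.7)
The plan is to extend the strategy of Theorem \ref{theorem-example-2} to the $(N+1)$-dimensional system (\ref{period-system}). Define the smooth mapping
$$
F(p, q_1, \dots, q_N) = \Bigl( T\bigl(p, 2{\textstyle\sum_{j=1}^N} q_j \bigr),\; T_0(p, q_1),\; \dots,\; T_0(p, q_N) \Bigr)
$$
from $(0,1) \times (-\infty,0)^N$ to $\mathbb{R}_+^{N+1}$, and prove it is a diffeomorphism onto a simply connected open image $\Omega$ by showing that $\det DF$ has constant nonzero sign throughout the domain. The region $\Omega$ will then be the existence region for the positive ground state, and the root $(p,q_1,\dots,q_N)$ reconstructs a unique $u_*$ via (\ref{BC-flower}) and the components $\tilde u$ and $\tilde u_j$ traced along integral curves of (\ref{ode-tilde}).

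The first step is to evaluate $\det DF$ by exploiting the sparsity pattern of the Jacobian. Its first row is $(T_p,\, 2T_q,\, \dots,\, 2T_q)$ with $T_p, T_q$ evaluated at $(p, 2\sum_k q_k)$, while for $j=1,\dots,N$ the $(j+1)$-st row has $T_{0,p}^{(j)}$ in the first column, $T_{0,q}^{(j)}$ on the diagonal, and zeros elsewhere. Clearing the off-diagonal entries of row one by elementary row operations yields
$$
\det DF = \Bigl( T_p - 2T_q \sum_{j=1}^N \frac{T_{0,p}^{(j)}}{T_{0,q}^{(j)}} \Bigr) \prod_{j=1}^N T_{0,q}^{(j)}.
$$
By Lemma \ref{lem-period-2} each $T_{0,q}^{(j)} < 0$, so the product carries the fixed sign $(-1)^N$.

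The second step is to show that the bracketed factor has definite sign via the explicit integral representations (\ref{dTdp})--(\ref{dT0dq}). Multiplying through by the positive quantities $[E(p, 2\sum q_k) + A(1)]$ and $\prod_j [E(p, q_j) + A(1)]$, the bracket becomes
$$
U\prod_{k=1}^N Y_k - 2V \sum_{j=1}^N X_j \prod_{k\neq j} Y_k,
$$
where $U, V, X_j, Y_j$ denote the integral numerators appearing in (\ref{dTdp})--(\ref{dT0dq}). In the proof of Theorem \ref{theorem-example-2}, the analogous symmetric combination $UY - 2NVX$ regrouped into a sum of manifestly positive terms involving $V = q\mathcal{I}_1 + (1-p)(1+2p)/(3p) > 0$ (positivity from $T_q > 0$ in Lemma \ref{lem-period-1}) and $E(p,q)+A(1) > 0$; I would extend that decomposition here by expanding $\prod_k Y_k$ and tracking the cross terms so that each resulting summand is a product of positive factors.

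Finally, once $\det DF$ has constant nonzero sign, the inverse function theorem gives a local diffeomorphism. For global injectivity, I would reduce to a one-dimensional problem: for fixed $(L_1,\dots,L_N)$ the equations $T_0(p,q_j)=L_j$ implicitly define smooth functions $q_j = q_j(p)$ (since $T_{0,q}^{(j)} \neq 0$), whence the composite $p \mapsto T(p,2\sum_j q_j(p))$ is scalar-valued with derivative proportional to the bracketed factor already controlled in step two; it is therefore strictly monotone in $p$, yielding a unique preimage for every $(L,L_1,\dots,L_N) \in \Omega$. The image $\Omega$ is simply connected as the diffeomorphic image of the simply connected domain $(0,1)\times(-\infty,0)^N$.

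The main obstacle is the sign analysis in the second step. Lemma \ref{lem-period-2} only guarantees $\partial_p T_0 < 0$ for $p \in (0, \tfrac{1}{2}]$, so individual ratios $T_{0,p}^{(j)}/T_{0,q}^{(j)}$ need not have uniform sign; the sign-indefinite contributions from the $X_j$ must cancel against pieces of $U\prod_k Y_k$. In the symmetric case this cancellation is transparent because all loops contribute identical integrals, but for asymmetric loops the expansion of $\prod_{k\neq j} Y_k$ produces mixed products of the distinct $\mathcal{I}_2^{(j)}$'s and $q_j$'s whose manifestly positive regrouping is the technical heart of the proof.
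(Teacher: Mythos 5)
Your setup, the sparsity-based evaluation of the Jacobian, and the reduction to the sign of the factor $T_p - 2T_q\sum_j T_{0,p}^{(j)}/T_{0,q}^{(j)}$ all coincide with the paper's proof (your row-reduced determinant is just the paper's cofactor expansion (\ref{Jacobian}) in factored form). The genuine gap is exactly the one you flag yourself and then defer: for $p\in(\tfrac12,1)$ the individual terms $\partial T_0^{(j)}/\partial p$ have no controlled sign by Lemma \ref{lem-period-2}, so the claim that the expansion of $U\prod_k Y_k - 2V\sum_j X_j\prod_{k\neq j}Y_k$ "regroups into manifestly positive terms" is an assertion, not an argument, and in the asymmetric case it is the entire content of the theorem. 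Saying you "would extend that decomposition by tracking the cross terms" does not establish that such a regrouping exists; a priori the cross terms could fail to cancel.

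The paper closes this gap with a specific algebraic device that your proposal does not contain. After multiplying by the positive normalizers $[E+A(1)]$, three groups of terms are manifestly of sign $(-1)^{N+1}$; the residual terms assemble into a polynomial
\begin{equation*}
F(\hat p) = q\prod_{j=1}^N\bigl(q_j\mathcal{I}_j-\hat p\bigr) + 2\hat p\sum_{j=1}^N q_j\prod_{k\neq j}\bigl(q_k\mathcal{I}_k-\hat p\bigr),
\qquad \hat p := \frac{(1-p)(1+2p)}{3p},
\end{equation*}
whose degree-$N$ coefficient vanishes identically because of the Kirchhoff relation $q=2\sum_{j}q_j$, and whose remaining coefficients each have sign $(-1)^{N+1}$ via identities of the type $q-2\sum_{k\neq j}q_k=2q_j<0$. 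This exact cancellation of the top-degree term is what rescues the sign for $p>\tfrac12$, and it is the step your proof must supply. On the positive side, your one-dimensional reduction for global injectivity (solving $T_0(p,q_j)=L_j$ for $q_j(p)$ and observing that the composite $p\mapsto T(p,2\sum_j q_j(p))$ has derivative equal to the bracketed factor) is a cleaner route to uniqueness than the paper's bare appeal to the inverse function theorem — but it consumes the same unproven sign input, so it cannot substitute for the missing argument.
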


\begin{proof}
The Jacobian of the transformation  $(p,q_1,\dots,q_N) \to (L,L_1,\dots,L_N)$ is 
$$
J := \left| 
\begin{matrix} 
 \frac{\partial T}{\partial p} & 2 \frac{\partial T}{\partial q} & \dots & 2 \frac{\partial T}{\partial q} \\
\frac{\partial T_0^{(1)}}{\partial p} & \frac{\partial T_0^{(1)}}{\partial q_1} & \dots & 0 \\
\vdots & \vdots & \dots & \vdots \\
\frac{\partial T_0^{(N)}}{\partial p} & 0 & \dots & \frac{\partial T_0^{(N)}}{\partial q_N}
\end{matrix}
\right|,
$$
where $T_0^{(j)}$ denotes $T_0(p,q_j)$. Expanding the Jacobian yields 
\begin{equation}
    \label{Jacobian}
    J = \frac{\partial T}{\partial p} \prod_{j=1}^N \frac{\partial T_0^{(j)}}{\partial q_j} - 2 \frac{\partial T}{\partial q} 
    \sum_{j=1}^N \frac{\partial T_0^{(j)}}{\partial p} \prod_{k \neq j} \frac{\partial T_0^{(k)}}{\partial q_k}.
\end{equation}
We will prove that $J \neq 0$ and ${\rm sgn}(J) = (-1)^{N+1}$. By the monotonicity results (\ref{monotonicity}) and (\ref{monotonicity-second}), the first term in (\ref{Jacobian}) has the required sign. However, the second term in (\ref{Jacobian}) has the required sign for $p \in \left(0,\frac{1}{2}\right]$ due to (\ref{monotonicity-third}) but is generally inconclusive for $p \in \left(\frac{1}{2},1\right)$. To make it conclusive for every $N \geq 2$, we define again 
\begin{align*}
    \Delta &:= [E(p,q) + A(1)] \prod_{j=1}^N [E(p,q_j) + A(1)] J \\
    &= \left(-p(1-p) \mathcal{I} + q\right) \prod_{j=1}^N \left( q_j \mathcal{I}_j - \frac{(1-p)(1+2p)}{3p} \right) \\
    & \quad -2 \left( q \mathcal{I} + \frac{(1-p)(1+2p)}{3p} \right) \sum_{j=1}^N \left(-p(1-p) \mathcal{I}_j - q_j \right) 
    \prod_{k \neq j} \left( q_k \mathcal{I}_k -\frac{(1-p)(1+2p)}{3p} \right),
\end{align*}
where the positive integrals $\mathcal{I}$ and $\{ \mathcal{I}_j \}_{j=1}^N$ are defined similarly to (\ref{integrals}). We will now show that $\Delta$ can be written as a sum of terms, each is nonzero and has the sign of $(-1)^{N+1}$. 
This is definitely true for 
$$
-p(1-p) \mathcal{I} \prod_{j=1}^N \left( q_j \mathcal{I}_j - \frac{(1-p)(1+2p)}{3p} \right),
$$
$$
+2 p(1-p) \left( q \mathcal{I} + \frac{(1-p)(1+2p)}{3p} \right) \sum_{j=1}^N \mathcal{I}_j 
    \prod_{k \neq j} \left( q_k \mathcal{I}_k -\frac{(1-p)(1+2p)}{3p} \right),
$$
and 
$$
+2 q \mathcal{I} \sum_{j=1}^N q_j 
    \prod_{k \neq j} \left( q_k \mathcal{I}_k -\frac{(1-p)(1+2p)}{3p} \right)
$$
because $q \mathcal{I} + \frac{(1-p)(1+2p)}{3p} > 0$ and $q_j \mathcal{I}_j - \frac{(1-p)(1+2p)}{3p} < 0$ by 
(\ref{monotonicity}) and (\ref{monotonicity-second}), respectively, whereas $p \in (0,1)$ and $q \in (-\infty,0)$. 
The remaining terms are combined together as 
\begin{align*}
F(\hat{p}) := q  \prod_{j=1}^N \left( q_j \mathcal{I}_j - \hat{p} \right) + 2 \hat{p} \sum_{j=1}^N q_j 
    \prod_{k \neq j} \left( q_k \mathcal{I}_k - \hat{p} \right), \quad \hat{p} := \frac{(1-p)(1+2p)}{3p}, 
\end{align*}
where the second term has the wrong signs of $(-1)^N$. It is clear that $F$ is a polynomial in $\hat{p}$ with the highest term $\hat{p}^N$ 
being zero because 
$$
(-\hat{p})^N \left[ q - 2\sum_{j=1}^N q_j \right] = 0.
$$
Hence, $F$ is a polynomial of degree $N-1$ in $\hat{p}$ and we show that every coefficient of this polynomial 
is nonzero and has the sign of $(-1)^{N-1}$: 
\begin{align*}
    (-\hat{p})^{N-1} : & \quad q \sum_{j=1}^N q_j \mathcal{I}_j - 2 \sum_{j=1}^N q_j \sum_{k\neq j} q_k \mathcal{I}_k 
    = \sum_{j=1}^N q_j \mathcal{I}_j \left(q - 2 \sum_{k \neq j} q_k \right) > 0 \\
    (-\hat{p})^{N-2} : &\quad q \sum_{j=1}^N \sum_{k > j} q_j q_k \mathcal{I}_j \mathcal{I}_k 
    - 2 \sum_{j=1}^N q_j \sum_{k\neq j} q_k \mathcal{I}_k \sum_{m > k} q_m \mathcal{I}_m =  
    \sum_{j=1}^N \sum_{k > j} q_j q_k \mathcal{I}_j \mathcal{I}_k \left( q - \sum_{m\neq j,k} q_m \right) < 0 \\
    \vdots \qquad & \qquad \vdots \\
    (-\hat{p})^1 : & \quad q \sum_{j=1}^N \prod_{k \neq j} q_k \mathcal{I}_k - 2 \sum_{j=1}^N q_j \sum_{k\neq j} q_k \mathcal{I}_k 
    = \sum_{j=1}^N (q - 2 q_j) \prod_{k \neq j} q_k \mathcal{I}_k, \\
    (-\hat{p})^0 : & \quad q \prod_{j=1}^N q_k \mathcal{I}_k,
\end{align*}
where the last coefficient has the sign of $(-1)^{N+1}$ and the coefficient for $(-\hat{p})^1$ has the sign of $(-1)^N$. 
Hence, every coefficient of $F(\hat{p})$ has the sign of $(-1)^{N+1}$, whereas $\hat{p} > 0$. This proves that 
$\Delta$ is nonzero and has the sign of $(-1)^{N+1}$.
\end{proof}

\begin{remark}
    The lower boundary of the simply connected region $\Omega \in \mathbb{R}^{N+1}_+$ is given by the condition $\lambda_0(\Gamma) = 1$, where $\lambda_0(\Gamma)$ is the lowest eigenvalue of $-\Delta_{\Gamma}$ in $L^2(\Gamma)$. The solution of $-\Delta_{\Gamma} \Psi = \lambda_0(\Gamma) \Psi$ in Remark \ref{rem-boundary} is generalized as 
    $$
    \psi(x) = \sin(\sqrt{\lambda_0(\Gamma)} x), \quad x \in (0,L)
    $$
  and
$$
\psi_j(x) = \frac{\sin(\sqrt{\lambda_0(\Gamma)} L)}{\cos(\sqrt{\lambda_0(\Gamma)} L_j)} \cos(\sqrt{\lambda_0(\Gamma)} x), \quad x \in (-L_j,L_j),  \quad j = 1, \dots, N. 
$$
The NK condition is satisfied if and only if $\lambda_0(\Gamma)$ is found from the transcendental equation 
$$
2 \sum_{j=1}^N \tan(\sqrt{\lambda_0(\Gamma)} L_j) = \cot(\sqrt{\lambda_0(\Gamma)} L).
$$
Since $\lambda_0(\Gamma)$ is monotonically decreasing with respect to $(L,L_1,\dots,L_N)$ by Lemma \ref{lem-linear-1}, the lower boundary of the existence region $\Omega$ can be parameterized as $L = L(L_1,\dots,L_N)$ given by 
\begin{equation}
    \label{lower-boundary-general}
L(L_1,L_2,\dots,L_N) = \arccot \left( 2 \sum_{j=1}^N \tan(L_j) \right).
\end{equation}
The surface for the lower boundary of $\Omega$ for $N = 2$ is shown in Figure \ref{fig:3dRegion}.
\end{remark}

\begin{figure}[htb!]
    \centering
    \includegraphics[width=6cm,height=4cm]{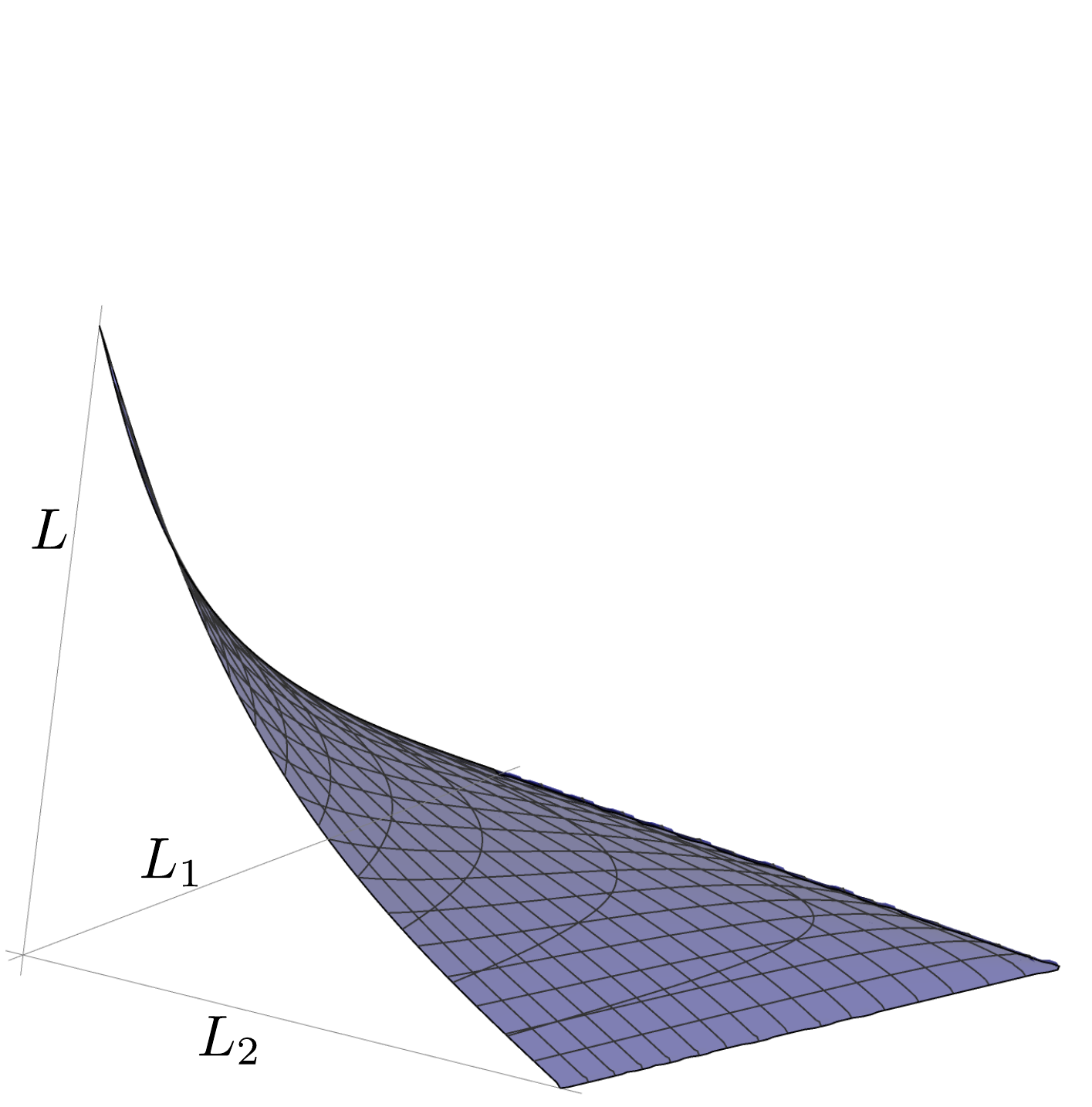}
    \caption{A plot of the lower boundary (\ref{lower-boundary-general}) for $N = 2$ in the $(L_1,L_2,L)$ space. The corner points are 
    $\left(\frac{\pi}{2},0,0\right)$,     $\left(0,\frac{\pi}{2},0\right)$, and     $\left(0,0,\frac{\pi}{2} \right)$. }
    \label{fig:3dRegion}
\end{figure}

Figure \ref{fig:n2flower} illustrates the construction of the positive ground state on the flower graph with $N = 2$ with $L_1 \neq L_2$ 
by using parts of three integral curves of the second-order equation (\ref{ode-tilde}). For the parameter values of $(L_1,L_2,L)$, 
a part of the integral curve for the pendant lies outside the homoclinic orbit, see Remark \ref{rem-outside}.

\begin{figure}[htb!]
    \centering
    \includegraphics[width=5cm,height=4cm]{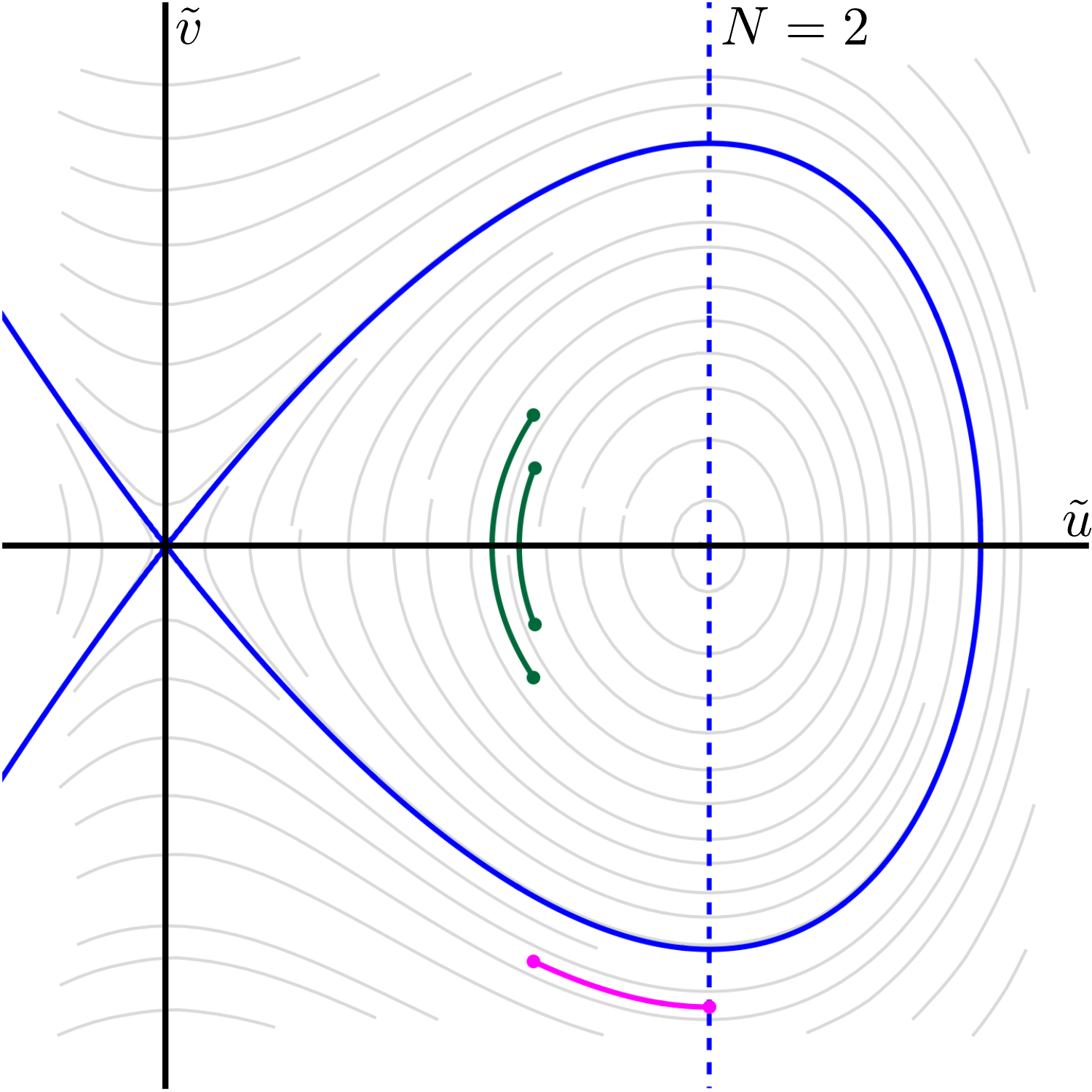}
    \includegraphics[width=5cm,height=4cm]{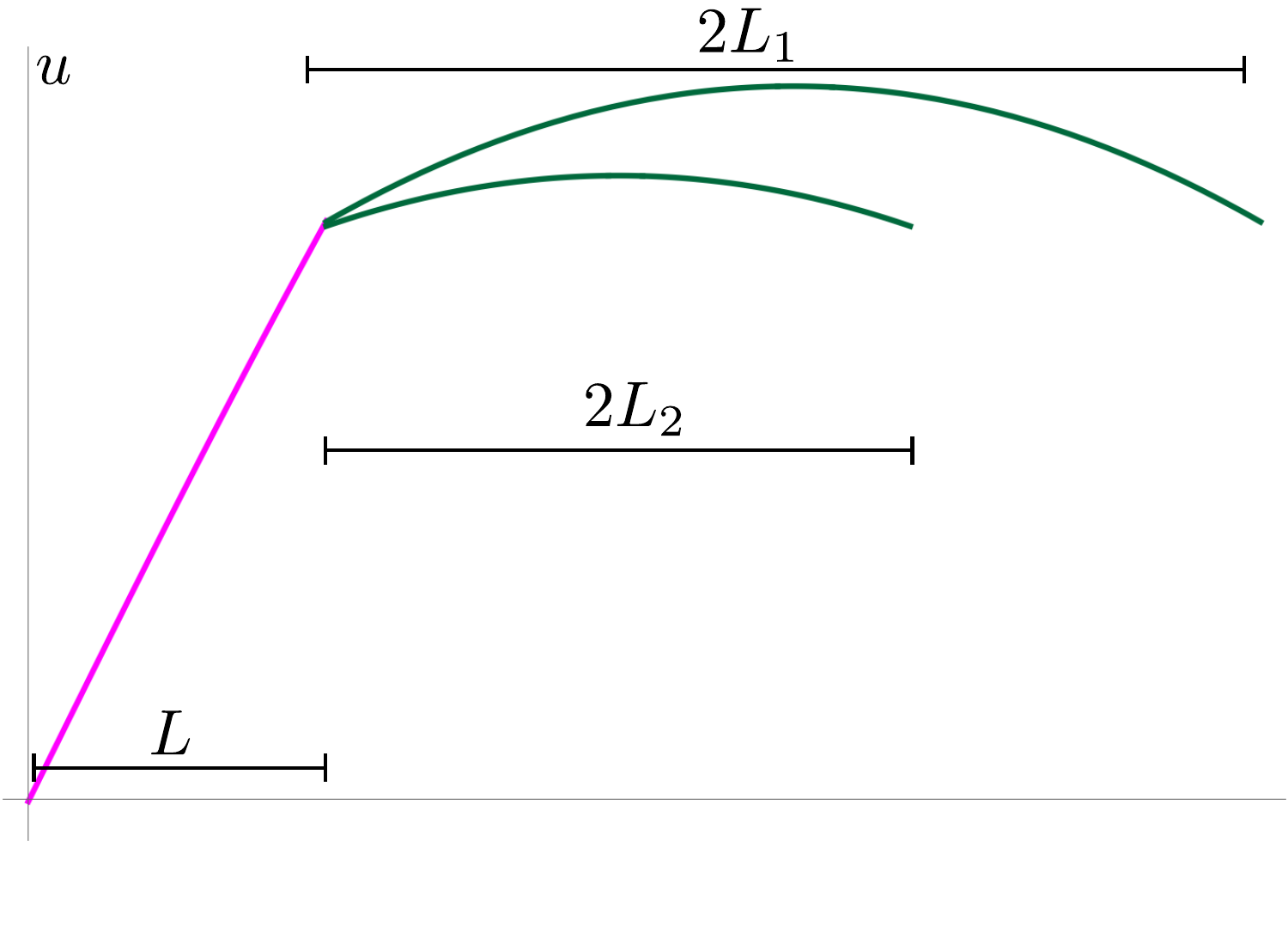}
    \includegraphics[width=5cm,height=4cm]{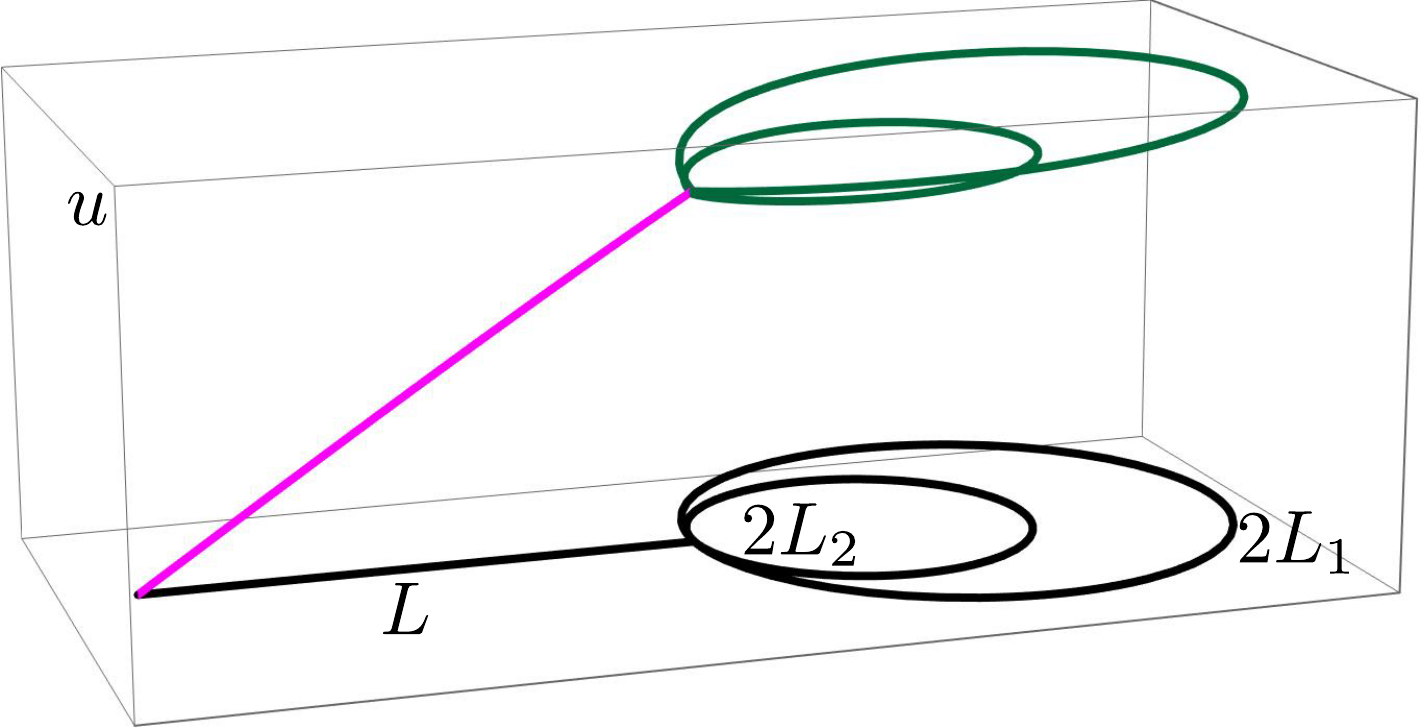}
    \caption{The positive ground state on a flower graph with a stem of length $L = 0.51$ and two loops of half-lengths $L_1 = 0.8$ and $L_2=0.5$. Left: parts of three integral curves on the phase plane $(\tilde{u},\tilde{v})$. Center: a plot in variables $(x,u(x))$ side by side. Right: a (3D)-plot showing the solution on the flower graph.}
    \label{fig:n2flower}
\end{figure}

\section*{Acknowlegements} A part of this work was performed during the visit of D. E. Pelinovsky to the University of Sydney. Both authors would like to acknowledge support of the Australian Research Council under grant ARC DP210101102. RM would like to acknowledge Jeremy Marzuola for fruitful discussions.

\nocite{DGK25}
\bibliographystyle{alpha}
\bibliography{FKPPGraph} 

\end{document}